\documentclass[oneside,12pt]{article}

\usepackage{amsmath}
\usepackage{amssymb}
\usepackage{pxfonts}
\usepackage{graphicx}
\usepackage{eucal}
\usepackage{mathrsfs}
\usepackage{theorem}
\usepackage{pifont}
\usepackage{color}
\usepackage{mathabx}
\usepackage{enumitem}
\usepackage[margin=2cm]{geometry}
\usepackage[backref=page]{hyperref}
\usepackage[normalem]{ulem}

\usepackage{tocloft}

\definecolor{shadecolor}{rgb}{0.8,0.8,0.8}

\usepackage[all]{xy}
\usepackage{tikz} 
\usepackage{tkz-euclide}

\newcommand{\btkz}{\begin{tikzpicture}}
\newcommand{\etkz}{\end{tikzpicture}}

\newcommand{\brk}[1]{\left(#1\right)}          
\newcommand{\BRK}[1]{\left\{#1\right\}}        

\newcommand{\mymat}[1]{\begin{pmatrix} #1 \end{pmatrix}}
\newcommand{\Cases}[1]{\begin{cases} #1 \end{cases}}

\newcommand{\secref}[1]{Section~\ref{#1}}
\newcommand{\figref}[1]{Figure~\ref{#1}}
\newcommand{\thmref}[1]{Theorem~\ref{#1}}

\newcommand{\propref}[1]{Proposition~\ref{#1}}
\newcommand{\lemref}[1]{Lemma~\ref{#1}}
\newcommand{\corrref}[1]{Corollary~\ref{#1}}

\newcommand{\beq}{\begin{equation}}
\newcommand{\eeq}{\end{equation}}
\newcommand{\bsplit}{\begin{split}}
\newcommand{\esplit}{\end{split}}
\newcommand{\baligned}{\begin{aligned}}
\newcommand{\ealigned}{\end{aligned}}

\newcommand{\Emph}[1]{{\slshape\bfseries #1}}  

\providecommand{\e}{\varepsilon}
\providecommand{\half}{\frac{1}{2}}

\providecommand{\R}{\bbR}

\newcommand{\Textand}{\qquad\text{ and }\qquad}

\providecommand{\vp}{\varphi}

\newcommand{\dist}{{\operatorname{dist}}}

\newcommand{\SO}{{\operatorname{SO}}}

\newcommand{\Vol}{{\operatorname{Vol}}}

\theoremheaderfont{\bfseries\rmfamily}
\newtheorem{theorem}{Theorem}[section]
\newtheorem{lemma}[theorem]{Lemma}
\newtheorem{proposition}[theorem]{Proposition}
\newtheorem{corollary}[theorem]{Corollary}

\newenvironment{proof}{{\flushleft \emph{Proof}:}}{\hfill\ding{110}}
\newenvironment{proof1}[1]{{\flushleft \emph{Proof #1}}}{\hfill\ding{110}}

\def\Xint#1{\mathchoice
   {\XXint\displaystyle\textstyle{#1}}%
   {\XXint\textstyle\scriptstyle{#1}}%
   {\XXint\scriptstyle\scriptscriptstyle{#1}}%
   {\XXint\scriptscriptstyle\scriptscriptstyle{#1}}%
   \!\int}
\def\XXint#1#2#3{{\setbox0=\hbox{$#1{#2#3}{\int}$}
     \vcenter{\hbox{$#2#3$}}\kern-.5\wd0}}

\def\dashint{\Xint-}

\usepackage{framed}
\usepackage{enumitem}
\definecolor{shadecolor}{rgb}{0.90,0.90,0.90}

\newcommand{\calK}{{\mathcal K}}

\newcommand{\calM}{{\mathcal M}}

\newcommand{\calP}{{\mathcal P}}

\newcommand{\calS}{{\mathcal S}}

\newcommand{\bbR}{{\mathbb R}}
\newcommand{\bbS}{{\mathbb S}}

\newcommand{\frakn}{\mathfrak{n}}

\newcommand{\M}{\calM}

\renewcommand{\S}{\calS}
\renewcommand{\P}{\calP}

\newcommand{\g}{g}
\newcommand{\K}{\calK}
\renewcommand{\O}{\operatorname{O}}
\renewcommand{\v}{\mathbf{v}}
\newcommand{\w}{\mathbf{w}}
\newcommand{\W}{\Omega}

\newcommand{\tr}{\operatorname{tr}}

\newcommand{\ip}[1]{\langle #1 \rangle}
\newcommand{\euc}{\mathfrak{e}}

\renewcommand{\Emph}[1]{{\bfseries #1}}

\newcommand{\Gdisc}{{G^\odot_\alpha}}
\newcommand{\gdisc}{{g^\odot_\alpha}}
\newcommand{\Pdisc}{{\calP^\odot_\alpha}}
\newcommand{\Mdisc}{{\M^\odot_h}}
\newcommand{\Sdisc}{{\S^\odot}}
\newcommand{\Sdiscrho}{{\S^\odot_\rho}}
\newcommand{\Sdisch}{{\S^\odot_h}}
\newcommand{\Edisc}{E^\odot_{\alpha,h}}  
\newcommand{\Kdisc}{{\K^\odot_\alpha}}
\newcommand{\ca}{c_\alpha}

\newcommand{\Mdislrho}{{\M^\obot_{\rho,h}}}
\newcommand{\Mdisl}{{\M^\obot_{\e,h}}}
\newcommand{\Sdisl}{{\S^\obot_{\e}}}
\newcommand{\Pdisl}{{\calP^\obot_\e}}
\newcommand{\Kdisl}{{\K^\obot_\e}}
\newcommand{\Gdisl}{{G^\obot_\e}}
\newcommand{\gdisl}{{g^\obot_\e}}
\newcommand{\Edisl}{{E^\obot_{\e,h}}}

\newcommand{\VolE}{\textup{d}\operatorname{Vol}_\euc}
\newcommand{\VolGe}{\textup{dVol}_{\Pdisl}}
\newcommand{\VolGa}{\textup{dVol}_{\Pdisc}}

\newcommand{\dVol}[1]{\textup{dVol}_{ #1}}

\newcommand{\HA}{A}

\newcommand{\EPlate}{\mathcal{E}^h}
\newcommand{\EB}{\mathcal{E}^B}
\newcommand{\ES}{\mathcal{E}^S}
\newcommand{\tEP}{\tilde{\mathcal{E}}^h}
\newcommand{\tEB}{\tilde{\mathcal{E}}^B}

\newcommand{\pl}{\partial}

\numberwithin{equation}{section}

\begin{document}

\title{Energy scaling laws for thin elastic sheets with topological defects}
\author{
Raz Kupferman\footnotemark[1] \and Cy Maor\thanks{Einstein Institute of Mathematics, Hebrew University of Jerusalem} \and David Padilla-Garza\thanks{IST Austria} 
}
\date{}
\maketitle

\begin{abstract}
We derive energy scaling laws for thin elastic sheets with topological defects --- disclinations and dislocations --- for a fully nonlinear 3D model.
For disclinations, the scaling laws are tight in the thickness parameter, and improve upon previous results by applying simultaneously to positive and negative disclinations (e-cones) and by giving an explicit dependence on the defect parameter; this latter dependence is, however, still not tight.
For thin bodies with dislocations, these are, to the best of our knowledge, the first rigorous bounds for models of finite thickness, are tight when the Burgers vector is not large with respect to the thickness, and relate to a well-known conjecture from the physics literature about the scaling.
A main tool is modeling these bodies in the framework of non-Euclidean elasticity, as bodies with a curl-free pre-strain; the curl-freeness allows us to obtain geometric rigidity estimates for the lower bounds.
\end{abstract}

\setcounter{tocdepth}{1}
\begingroup
\footnotesize
\tableofcontents
\endgroup

\section{Introduction and main results}
The study of material defects has been one of the central themes in solid mechanics for more than a century, dating  back to Volterra \cite{Vol07}.
Two of the most important kinds of defects are disclinations and dislocations:
A \textbf{disclination} is a body from which a cylindrical sector has been removed (positive disclinations, or cones) or added (negative disclinations, or $e$-cones).
A \textbf{dislocation} can be obtained by cutting a body along a half-plane, translating one of the sides of the cut and gluing it to the other edge. If the translation is perpendicular to the axis of symmetry, this is called an \emph{edge-dislocation}; throughout this paper, all the dislocations will be of the edge type.
See \figref{fig:defects} for a depiction of disclinations and dislocations.

\begin{figure}[h]
\begin{center}
\includegraphics[height=0.8in]{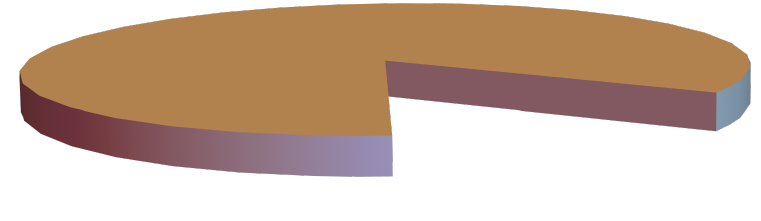}
\hspace{1cm}
\includegraphics[height=1.0in]{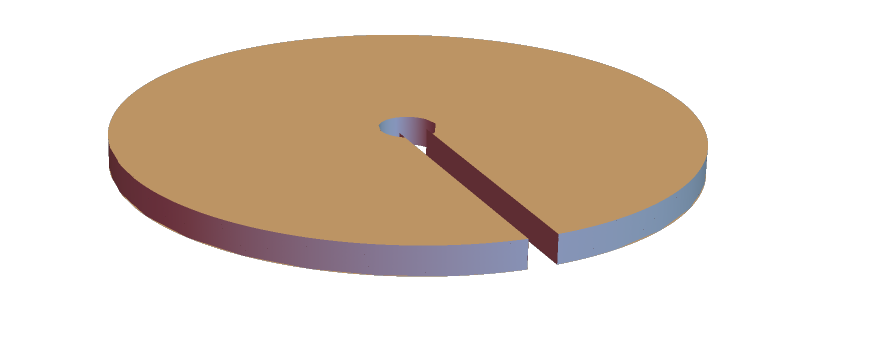}
\end{center}
\caption{Left: a thin sheet with a positive disclination is obtained by gluing the edges of the removed wedge.
	Right: a thin sheet with a dislocation can be obtained by gluing the two parallel edges.}
\label{fig:defects}
\end{figure}

There is a vast literature on the modeling, mechanics and geometry of these bodies; in this paper we are interested in a specific aspect, which is a study of the elastic energy of \emph{thin} bodies with disclinations and dislocations.
Thin bodies with defects are important in many physical systems, e.g., in graphene monolayers.
They have been analyzed in the physics and mechanics literature \cite{ES51,nelson1987fluctuations,SN88,Nel02,Wit07}  using ansatzes, asymptotic and numerical analysis, yet the rigorous mathematical study of them is rather  restricted: The works \cite{MO14,olbermann2016energy,conti2017symmetry,Olb17,olbermann2018shape} study the energy scaling of a single positive disclination in various models (and in the case of \cite{conti2017symmetry}, under external compression), and the works \cite{Kup17,KMP25} study the energy scaling of disclinations and dislocations in the zero-thickness limit (pure bending models).

\subsection{The model}

Bodies with disclinations and dislocations are frustrated --- they exhibit stress even in the absence of external loading --- and as such, they can be naturally modeled within the framework of \emph{non-Euclidean elasticity} (also known as \emph{incompatible elasticity}). 
For a general model, we refer to \cite{Mao25} or to \cite[Sec.~2]{kupferman2026volterra}. We now present the minimal model that fits our purposes:
The elastic body is a three-dimensional domain $\M\subset \R^3$, endowed with a \Emph{prestrain map} $\calP:T\M\to \R^3$, which is a non-degenerate section; we will assume that it is smooth except possibly on a set of measure zero.\footnote{In \cite[Sec.~2]{kupferman2026volterra}, $\calP$ is assumed to be globally-defined and smooth, which is  too strict an assumption for our case. 
In \cite{Mao25} a more general case is treated, in which one covers $\M$ with subdomains endowed with prestrain maps (satisfying certain compatibility conditions). 
In our case, a single prestrain map, which may be discontinuous, suffices.}
The prestrain prescribes the intrinsic geometry of the body: a frame in $T\M$ is oriented and orthonormal if $\calP$ maps it into an oriented orthonormal frame in $\R^3$.
In other words, $\calP$ induces a Riemannian metric $G$ on $\M$ via
\beq\label{eq:metric}
G(u,v) = \ip{\calP(u),\calP(v)}_{\R^3},
\eeq
which we will assume to be smooth.\footnote{This does not contradict the possible non-smoothness of $\calP$ as $G$ is invariant under left-multiplication of $\calP$ by arbitrary $\SO(3)$ fields.}
The elastic energy of a configuration $f:\M\to \R^3$ is then
\beq
\label{eq:non-Euc_energy}
E_{(\M,\calP)}(f) = \dashint_\M \dist^2(df \circ \calP^{-1} , \SO(3) ) \,\dVol{\calP},
\eeq
where $\dVol{\calP}$ is the volume form induced by $\calP$, that is $\calP^*dx$, where $dx$ is the standard volume form on $\R^3$, and $\dashint$ is the integral divided by the total volume.\footnote{One can consider more general energy densities; in fact, for models arising from crystalline materials, non-isotropic models are more physically-relevant. Yet, for the sake of the energy scaling considered in this paper, this simple prototypical energy density suffices.}

We now describe the specific models of disclinations and dislocations considered in this paper.
The key fact is that, as the geometry of both disclinations and dislocations is locally-Euclidean (zero curvature), the prestrain-map $\calP$ can be chosen to be curl-free on its smooth part, that is, $d\calP=0$, when $\calP$ is viewed as an $\R^3$-valued one-form \cite{Mao25}.
This approach is natural both from a modeling perspective, as the map $\calP$ represents the lattice directions at each point, and from an analytic perspective, as it yields Friesecke--James--M\"uller-type (FJM) geometric rigidity estimates without error terms, as will be described below. 
This approach was previously used for the analysis of planar configurations of dislocations \cite{kupferman2026volterra}; here we extend it to both dislocations and disclinations in three-dimensional configurations.\footnote{In most of the literature on non-Euclidean elasticity, $\calP$ is taken to be $\sqrt{G}$ (in the given coordinates on $\M$).
This choice yields the same energy as any other $\calP$ satisfying \eqref{eq:metric} if the energy density is isotropic (as in \eqref{eq:non-Euc_energy}); generally, however, this choice is not curl-free, which makes the analysis significantly harder, as it obscures the flatness of $G$.
An alternative approach in the locally-flat case is to use cut-and-glue methods, e.g., to identify a cone with a ``pac-man shaped" domain in which two edges are glued, as done in \cite{Olb17} for disclinations; here too, the use of a curl-free prestrain is more flexible and simplifies the analysis.}

\paragraph{Disclinations.}
The body manifold of a disclination is a cylinder of radius $R$ and thickness $t$ from which a cylindrical sector of 
angle $\alpha\in (-\infty,2\pi)$ has been removed; in the case of $\alpha<0$, this entails inserting a sector. 
A manifold endowed with such a geometry can be realized as follows: the coordinate representation of the body manifold is the cylinder
\[
N^t_R := \{ (r,\vp,z) ~:~ r\in (0,R),\, \vp\in \bbS^1,\, z\in (0,t)\}.
\]
Take the sector
\[
(r,\theta,z)\in (0,R)\times (0,2\pi-\alpha) \times (0,t),
\]
and endow it
with the standard Euclidean prestrain map (in cylindrical coordinates)
\[
e_1\otimes dx + e_2\otimes dy + e_3\otimes dz = e_1\otimes (\cos\theta \, dr - r\,\sin\theta\,d\theta) + e_2\otimes (\sin\theta \, dr + r\,\cos\theta\,d\theta) + e_3\otimes dz,
\]
where $\{e_1,e_2,e_3\}$ denotes the standard Euclidean frame.
Then, pull back this prestrain map onto $N^t_R$
via the map $\vp = \theta/c_\alpha$, where $c_\alpha = 1 - \alpha/2\pi$.
This results in the prestrain map
\beq
\begin{aligned}
\Pdisc &= e_1\otimes \brk{\cos(\ca\vp) \, dr - \ca r\,\sin(\ca\vp)\,d\vp} \\
&+ e_2\otimes \brk{\sin(\ca\vp) \, dr + \ca r\,\cos(\ca\vp)\,d\vp} \\
&+ e_3\otimes dz.
\end{aligned}
\label{eq:tildeDa}
\eeq
This pullback is equivalent to ``gluing together" the edges of the sector.
As it is more convenient to work with non-dimensionalized parameters, we rescale the system and use the body manifold $\Mdisc :=N^h_1$, where $h=t/R\in (0,1)$ is the non-dimensionalized thickness.
Throughout this work, the superscript $\odot$ denotes entities pertinent to disclinations.

This prestrain map is (generally) discontinuous at $\vp = 0$, yet, it is curl-free on the set of full measure $\Mdisc \setminus \{\vp=0\}$ as a pullback of a curl-free map.
It is easy to check that it indeed induces the standard metric of a cylindrical cone
\beq
\Gdisc = dr\otimes dr + \ca^2 r^2\,d\vp\otimes d\vp + dz\otimes dz.
\label{eq:Gdisc}
\eeq

The case of $\alpha>0$ represents standard cones, i.e., cones of positive curvature, and the case of $\alpha<0$ represents $e$-cones, i.e., cones of negative curvature. 
To avoid technicalities, and to focus on more physically-relevant scenarios, we restrict ourselves to $\alpha\in (-\pi,\pi)$. 
We shall also assume that the radius of the body is not small with respect to its thickness, that is, $h<1/2$.

We denote by 
\[
\Edisc = E_{(\Mdisc,\Pdisc)}
\]
the energy associated with this disclination model.

\paragraph{Dislocations.}
The body manifold of a dislocation is an annulus
\[
\M_{r_0,R}^t := \{ (r,\vp,z) ~:~ r \in (r_0, R),\, \vp\in \bbS^1,\, z\in (0,t)\},
\]
which we again non-dimensionalize, and consider $\Mdislrho :=  \M^h_{\rho,1}$, where $\rho = r_0/R$ is the core radius and $h = t/R$ is the thickness; the superscript $\obot$ denotes henceforth entities pertinent to dislocations.

The prestrain map of a dislocation with a Burgers vector of size $\e$ can be chosen to be 
\beq
\label{eq:implantdisl}
\Pdisl  = I_{3\times 3} +  \frac{\e}{2\pi} e_1\otimes d\vp,
\eeq
where $I_{3\times 3}$ is the differential of the coordinate identity map; indeed, $\Pdisl$ is curl-free, however its integral over a simple path encircling the origin yields a vector of size $\e$ (see \cite[Sec.~3]{kupferman2026volterra} for a detailed analysis of this geometry, as well as for a proof that it is the unique geometry of an edge-dislocation).
Note that in order for $\Pdisl$ to be invertible, $\e$ cannot be large with respect to the core radius $\rho$. 
In most physical systems, the core radius and the Burgers vector are of the same order (that of the crystal lattice spacing), and thus we will henceforth assume that $\rho=\e$, i.e., we do not artificially enlarge the core.
As for disclinations, we shall also assume that the radius of the body is not small with respect to its thickness, that is, $h,\e<1/2$.

We denote by 
\[
\Edisl = E_{(\Mdisl,\Pdisl)}
\]
the energy associated with this dislocation model, and by $\Gdisl$ the metric induced by $\Pdisl$.

\paragraph{Two-dimensional models}
Thin sheets can be viewed as ``thickened surfaces", and their energetics is tightly related to the energetics 
associated with the immersion of surfaces in $\R^3$. The disclination and the dislocation models involve geometries having axial symmetry, i.e., independent of $z$ (a \emph{plate} geometry in the mechanical context). 
Denoting by $\S\subset\M$ the $\{z=0\}$ surface, henceforth referred to as the \emph{mid-surface}, the prestrain $\P$ and the metric $G$ decompose orthogonally, yielding a metric $g$ and a prestrain $\K$ on $\S$; see Appendix~\ref{sec:2D3D} for details. 

Let $\sigma:\S\to\R^3$ be a configuration of the mid-surface. The plate energy associated with $\sigma$ is of the form
\[
\EPlate_{(\S,\K)}(\sigma) = \ES_{(\S,K)}(\sigma) + h^2\, \EB_{\S}(\sigma),
\]
where $\ES_{(\S,K)}(\sigma)$ is the so-called \emph{stretching energy} of the surface, which penalizes distortions with respect to the metric $g$, and $ \EB_{\S}(\sigma)$ is the so-called \emph{bending energy} of the surface, which penalizes its extrinsic curvature. The 3D energy $E_{(\M,\P)}(f)$ and the plate energy $\EPlate_{(\S,\K)}(\sigma)$ can be related either by viewing $\sigma$ as the restriction of $f$ to the mid-surface, or by viewing $f$ as an extension of $\sigma$ via the so-called Kirchhoff-Love extension \cite{Lov27}. Furthermore, two-sided inequalities can be derived, connecting the reduced 2D and full 3D models. This schematic description of 2D versus 3D models suffices for the sake of this introduction;  further refinements and technicalities are provided in Appendix~\ref{sec:2D3D}.

\subsection{Main results}

\paragraph{Disclinations.}
We prove the following bounds for disclinations:
\begin{theorem}[Energy scaling law for thin sheets with disclinations]
\label{thm:disclination}
Let $h<1/2$ and $\alpha \in (- \pi, \pi)$, and let $\Edisc$ be the elastic energy of a body with a disclination as defined in the previous section. 
Then, 
\begin{equation}
\frac{1}{C} \alpha^2 h^2 \,\log\brk{\frac1h} \le \inf_{f\in W^{1,2}} \Edisc(f) \leq C \min \BRK{\alpha^2,  |\alpha| h^2 \log\brk{\frac1h}} , 
\end{equation}
for a constant $C>0$ independent of $h$ and $\alpha$.  
\end{theorem}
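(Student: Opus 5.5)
The proof splits into the upper bound, which uses two explicit configurations, and the lower bound, which rests on curl-free geometric rigidity.

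\emph{Upper bound.} The upper bound is the minimum of two bounds, each coming from its own competitor.

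The bound $C\alpha^2$ comes from a planar configuration. Take $f(r,\vp,z)=(r\cos\vp, r\sin\vp, z)$, the identity in cylindrical coordinates. Then $df\circ(\Pdisc)^{-1}$ equals a rotation composed with $\mathrm{diag}(1,1/\ca,1)$. Its distance to $\SO(3)$ is therefore at most $|1-1/\ca|\le C|\alpha|$ for $|\alpha|<\pi$, and the energy is $\le C\alpha^2$.

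The bound $C|\alpha|h^2\log(1/h)$ comes from a Kirchhoff--Love extension of a mid-surface $\sigma$, using the 2D/3D comparison of Appendix~\ref{sec:2D3D}. This reduces the problem to bounding $\ES+h^2\EB$ for $\sigma$.
\begin{itemize}
\item For $\alpha>0$, take $\sigma$ to be an isometric cone of opening matching $\ca$ outside a core of radius $h$. Its principal curvature is of order $\sqrt{\alpha}/r$, so $h^2\int_h^1 \alpha r^{-2}\, r\,dr\sim \alpha h^2\log(1/h)$.
\item For $\alpha<0$, use an e-cone: $\sigma(r,\vp)=r\,\gamma(\vp)$, where $\gamma$ is a closed unit-speed curve on $\bbS^2$ of length $2\pi\ca>2\pi$ (for instance, a wavy curve near the equator with amplitude $\sim\sqrt{|\alpha|}$). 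The bending density is again $\sim|\alpha|/r^2$.
\item In both cases, inside the core $r<h$ interpolate smoothly to a flat disc. The stretching there is $O(\alpha^2)$ on area $h^2$, and the bending is $O(|\alpha| h^2)$, so both are lower order.
\end{itemize}
Taking the minimum of the two competitors gives the claimed upper bound.

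\emph{Lower bound.} The key tool is that $\Pdisc$ is curl-free off $\{\vp=0\}$. Fix a dyadic annulus $A_k=\{2^{-k-1}<r<2^{-k}\}$ with $2^{-k}\ge h$, and cut it along the slit. On the slit domain, $\Pdisc=d\iota$ for the developing map $\iota$ onto a Euclidean annular sector. Hence $E$ restricted to $A_k$ equals $\int \dist^2(d(f\circ\iota^{-1}),\SO(3))$ on a Euclidean domain, and FJM rigidity applies without error terms.

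The domain has aspect ratio thickness$/$width $=h/2^{-k}$, so scaling gives a rigidity constant $\sim (2^{-k}/h)^2$. This yields a single rotation $Q_k$ with
\[
\int|d(f\circ\iota^{-1})-Q_k|^2\le C(2^{-k}/h)^2\int_{A_k}\dist^2 .
\]
However, $f$ is single-valued across the slit, while the two developed edges differ by the rotation $R_\alpha$ by angle $2\pi-\alpha$ about $e_3$. Comparing traces of $df$ on both sides of the slit therefore forces $|Q_k-Q_kR_\alpha|\lesssim$ the local $L^2$-deviation. Since $|I-R_\alpha|\sim|\alpha|$, we get
\[
\alpha^2\,(2^{-k})^2h\lesssim (2^{-k}/h)^2\int_{A_k}\dist^2 .
\]
Here the left side uses that the annulus has volume $\sim 2^{-2k}h$. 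This gives $\int_{A_k}\dist^2\gtrsim\alpha^2h^3$ per annulus. Normalizing by the total volume $\sim h$ gives $\alpha^2h^2$ per annulus, and summing over the $\sim\log(1/h)$ annuli with $2^{-k}\in(h,1)$ yields the $\alpha^2h^2\log(1/h)$ lower bound.

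\emph{Main obstacle.} The delicate step is the slit comparison, which has two parts. First, one must argue that a single $Q_k$ on a slit (non-simply-connected after gluing) domain cannot match both edges. Second, the trace comparison must be made in $L^2$ with constants uniform in $k$. I would handle this by applying rigidity on two overlapping simply-connected sectors that cover the annulus, each with its own developing map. Comparing the two resulting rotations on both overlap regions then produces the defect $R_\alpha$ and avoids traces entirely.
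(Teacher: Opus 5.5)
Your upper bound is the paper's: the flat identity map gives $|1-1/\ca|\le C|\alpha|$, and the cone (for $\alpha>0$) or wavy e-cone (for $\alpha<0$), regularized in a core of radius $h$ and extended by Kirchhoff--Love, gives the second term. Your lower bound is correct but organized differently from the paper's.

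\textbf{The paper's route.} The paper slices $\Mdisc$ into rings $\M^h_{\rho,\rho+h}$ of width $h$. For such a ring it proves a rod-type rigidity constant $C\rho^2/h^2$ (Lemma~\ref{cor:FJMHA}, Proposition~\ref{prop:FJMdisc}). It detects the defect through the circulation $\oint\Pdisc(\partial_\vp)\,d\vp=r\big((\cos\alpha-1)e_1-\sin\alpha\,e_2\big)$ combined with Jensen. This gives $\int|df-Q\Pdisc|^2\ge Ch^2\rho(1-\cos\alpha)$ for every $Q$, and summing $h^4/\rho$ over $\rho=ih$ produces the logarithm.

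\textbf{Your route.} You use dyadic plate-annuli with the plate constant $(L/h)^2$. You read off the defect as the mismatch $|I-R_\alpha|$ between the rotations obtained on two overlapping sectors. Per dyadic annulus both routes give $\alpha^2h^3$, so the sums agree.

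\textbf{What each buys.} Your version needs only off-the-shelf plate rigidity and sees the rotational holonomy directly. The paper's circulation argument avoids tracking overlaps in the lower-bound lemma. More importantly, it carries over verbatim to dislocations: there the rotational holonomy of $\Pdisl$ is trivial and only the translational part of $\oint\P$, the Burgers vector, is nonzero, so a rotation-mismatch argument would detect nothing.

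\textbf{Minor points.}
\begin{itemize}
\item The constant $(L/h)^2$ comes from the FJM plate rigidity estimate, which is a covering argument, not from scaling alone.
\item The two-sector version is genuinely needed, not a fallback. For $\alpha<0$ the developed slit annulus overlaps itself, and for small $\alpha>0$ it is not uniformly Lipschitz. Each sector should have coordinate angle below $2\pi/\ca$, e.g.\ $7\pi/6$.
\item For the e-cone, the curve on $\bbS^2$ must have speed $\ca$ over $\bbS^1$, not unit speed.
\item The wave must have mode at least $2$. A mode-one wave is, to leading order, a tilted great circle with no length excess at order (amplitude)$^2$, so it would not produce the $\sqrt{|\alpha|}$ amplitude scaling you need.
\end{itemize}
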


We discuss the proof of the lower bounds below (as it is similar to the dislocation case); the upper bound consists of the minimum of two terms:
\begin{itemize}
\item The $\alpha^2$ bound corresponds to a flat configuration of the mid-surface --- i.e., zero bending energy. It is optimal when $h$ is large.
\item The $|\alpha| h^2 \log\brk{1/h}$ bound corresponds to an isometric immersion of the mid-surface (for $\alpha>0$, this is the standard cone), outside a small region of radius $h$ around the origin --- i.e., zero stretching energy along the mid-surface throughout most of the domain.
\end{itemize}
This bound is tight in the scaling of $h$ as $h\to 0$, yet it is not tight in the scaling of $\alpha$ for $\alpha\to 0$.
Nevertheless, it constitutes a first bound for the full 3D model of disclinations in thin sheets (rather than reduced 2D models) that has explicit dependence on $\alpha$; indeed, in \cite[Theorem 2]{Olb17} the constant depends on $\alpha$.
It is also the first bound to apply both for positive and negative $\alpha$ --- the sequence of works \cite{MO14,olbermann2016energy,Olb17,olbermann2018shape} only deals with $\alpha>0$, whereas \cite{KMP25} deals with arbitrary $\alpha$ but only for pure bending models in the limit $h\to 0$.

We believe the upper bound to be tight.
Indeed, a lower bound of $ |\alpha| h^2 \log\brk{1/h}$ was obtained for a reduced 2D model with a geometrically-linearized bending \cite{Olb17,olbermann2018shape}, in a von-K\'arm\'an model \cite{Olb17} and in a pure bending model \cite{KMP25}.
To prove its tightness, it would be sufficient to obtain an $ |\alpha| h^2 \log\brk{1/h}$ lower bound for a 2D model, since, as shown in \thmref{thm:3D2D}, such a bound would also apply to the 3D setting.
Yet upgrading the pure bending bound of \cite{KMP25} to allow for stretching is significantly more complicated than in the case of geometrically-linearized bending \cite{Olb17,olbermann2018shape}, and we aim to address it in future work.
We note, however, that the current $ \alpha^2 h^2 \,\log\brk{1/h}$ lower bound, while presumably non-optimal, is both more informative than previous bounds for the 3D model, and has a much simpler proof.

\paragraph{Dislocations.}
We prove the following bounds for dislocations:
\begin{theorem}[Energy scaling law for thin sheets with edge-dislocations]
\label{thm:dislocation}
Let $\e,h<1/2$, and let $\Edisl$ be the elastic energy of a body with a dislocation as defined in the previous section. 
Then, 
\begin{equation}\label{eq:disloc_lb}
\frac{1}{C} \min\left\{\e^2, h^2\right\} \log\brk{2+\frac{h}{\e}} \le \inf_{f\in W^{1,2}} \Edisl(f) \leq C \min \left\{\e^2\log\brk{\frac{1}{\e}}, h^2 + \e^2 , h^2 \log\brk{\frac{1}{\e}}\right\} , 
\end{equation}
for a constant $C>0$ independent of $h$ and $\e$.  
\end{theorem}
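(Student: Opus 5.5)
We prove the two inequalities in \eqref{eq:disloc_lb} separately. Each of the three terms in the upper bound comes from an explicit competitor. The lower bound comes from a geometric rigidity argument that uses the curl-freeness of $\Pdisl$.

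\emph{Upper bounds.} We use three constructions.
\begin{enumerate}
\item For the bound $\e^2\log(1/\e)$ we take a $z$-independent planar configuration: the classical Volterra edge-dislocation displacement field composed with the identity. Since $\Pdisl - I_{3\times3} = \frac{\e}{2\pi}e_1\otimes d\vp$ has size $\e/r$, a map whose differential equals $I$ plus a suitable singular correction yields $\dist(df\circ\Pdisl^{-1},\SO(3)) \lesssim \e/r$. Integrating $\e^2/r^2$ over $r\in(\e,1)$, and using that the volume normalization is $O(h)$ and cancels, gives $\e^2\log(1/\e)$. Even more simply, we can take $f=\mathrm{id}$, which has $df\circ \Pdisl^{-1}-I = O(\e/r)$.
\item For the bound $h^2+\e^2$ we need a configuration whose error is $O(\e+h)$ pointwise, away from the $1/r$ singularity. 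The natural candidate is a ruled, helicoid-like out-of-plane construction. The jump of size $\e$ across a cut is realized by a vertical tilt that winds once around the core, so the Burgers vector is absorbed by rotating material out of plane. The mid-surface is then an isometric immersion of the flat metric $\gdisl$ (locally flat with translational holonomy) with curvature of order $1$. Thickening it via the Kirchhoff--Love extension gives bending cost $h^2\int|\text{II}|^2$. The metric is only approximately matched near the core, which costs $\e^2$. I would build it using the 2D--3D comparison of Appendix~\ref{sec:2D3D} (\thmref{thm:3D2D}) and then reduce to estimating the plate energy $\EPlate$.
\item For the bound $h^2\log(1/\e)$ we take an exact isometric immersion of the mid-surface with $|\text{II}|\sim 1/r$ everywhere; for instance, the developable surface obtained by tilting out of plane, with tilt angle $\sim \e$ concentrated but curvature $\lesssim 1/r$. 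Its bending energy is $h^2\int_\e^1 r^{-2}\,r\,dr = h^2\log(1/\e)$, and the stretching energy vanishes.
\end{enumerate}
The upper bound in \eqref{eq:disloc_lb} is the minimum of these three energies.

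\emph{Lower bound.} We decompose the annulus dyadically into rings $A_k=\{2^k\e<r<2^{k+1}\e\}$ and consider only the rings with $2^k\e\lesssim h$. There are about $\log(2+h/\e)$ of them. On each such ring, rescaled by its radius $r_k$, the body has aspect ratio $h/r_k\gtrsim 1$, so it is a genuinely three-dimensional, non-thin domain. Because $d\Pdisl=0$, locally $\Pdisl=d\psi$ on simply connected pieces. The FJM rigidity estimate, applied to $f\circ\psi^{-1}$ on the rescaled ring (covered by two simply connected pieces), gives a constant rotation $Q_k$ with
\[
\int_{A_k}|df\circ\Pdisl^{-1}-Q_k|^2 \lesssim \int_{A_k}\dist^2(df\circ\Pdisl^{-1},\SO(3)),
\]
with no error term.

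On the other hand, integrating $df$ around a loop gives zero, whereas integrating $Q_k\Pdisl$ around the same loop gives $Q_k$ applied to a vector of length $\e$. Averaging over loops in $A_k$ then yields
\[
\int_{A_k}|df\circ\Pdisl^{-1}-Q_k|^2 \gtrsim \frac{\e^2}{r_k^2}\cdot r_k^2 h = \e^2 h.
\]
This holds when $r_k\le h$. When $h<r_k$ we must instead cut the ring into cubes of size $h$, and the bound degrades to $\min(\e^2,h^2)$ times the volume. The case $h<\e$ is handled this way: a single ring of cubes of size $h$ gives $h^2$. Summing over $k$ and dividing by the total volume $\sim h$ gives $\min\{\e^2,h^2\}\log(2+h/\e)$.

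The main obstacle is making the rigidity step uniform. We need the FJM constant to be uniform on rescaled annuli with aspect ratio $h/r_k\ge1$, which may be large. I would handle this by tiling such an annulus into $O(h/r_k)$ stacked rings of unit aspect ratio and applying rigidity to each ring separately.
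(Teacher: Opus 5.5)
\textbf{Upper bounds.} Your flat competitor $f=\mathrm{id}$ is exactly the paper's. The other two competitors are not actually constructed, and the one for $h^2+\e^2$ cannot exist as you describe it. Suppose the mid-surface were an exact isometric immersion of $\gdisl$ with curvature $O(1)$ outside a core of radius comparable to $\e$. Then its bending energy would be $O(1)$. This is ruled out by the $\log(1/\e)$ lower bound of \cite{Kup17,KMP25}, recalled in the introduction, for the bending energy of isometric immersions of dislocated annuli whose core radius is comparable to the Burgers vector. That lower bound is precisely why pure bending cannot reach $h^2$. The paper, following \cite{Con23} and only as a sketch, does something different. It keeps three quarters of the sheet planar and lets the quarter carrying the excess length $\e$ form a fold of fixed profile and variable width and height. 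That quarter trades stretching $\lesssim\e^2$ against bending $\lesssim 1$, rather than being isometric away from the core.

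For $h^2\log(1/\e)$ you assume an isometric immersion with $|\mathrm{II}|\lesssim 1/r$ but do not produce one, and the ``small tilt'' picture is misleading. The paper's ansatz is $\sigma(r,\vp)=\e\gamma(\vp)+(r-\e)\nu(\vp)$, with straight radial rulings. Matching $\sigma^*\euc=\gdisl$ power by power in $r$ forces $|\nu'|=1$ and $\gamma'=\frac{1}{2\pi}\cos\vp\,\nu+\brk{1-\frac{1}{2\pi}\sin\vp}\nu'$. Closing up $\gamma$ then requires $\int_{\bbS^1}\cos\vp\,\nu\,d\vp=0$. This kills the first Fourier mode of $\nu$, so $\nu$ cannot be a perturbation of the equator. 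One takes e.g.\ $\nu=\half(\cos 2\vp,\sin 2\vp,\sqrt3)$, which gives an $O(1)$ double cone with $|\mathrm{II}|\sim 1/r$.

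\textbf{Lower bound.} For $\e\le h$ your argument works and is a genuinely different route from the paper. You split each dyadic ring into $\sim h/r_k$ slabs of unit aspect ratio, each with its own rotation. Rigidity plus the circulation identity give a contribution of order $\e^2 r_k$ per slab, hence $\e^2 h$ per ring, and $\e^2\log(2+h/\e)$ after summing and normalizing. The paper instead extracts the whole logarithm from the single annulus $\{\e<r<\e+h\}\times(0,h)$. Its rigidity constant $C(1+\e^2/h^2)$ is $O(1)$ when $\e\le h$, and the logarithm comes from $\int_\e^{\e+h}dr/r$. Your version needs rigidity only on pieces of fixed shape, whereas the paper's needs uniformity for a cylinder with a thin hole.

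For $h<\e$, however, there is a genuine gap, and this, not the tall rings, is the real difficulty. Cutting the ring into cubes of size $h$ gives nothing by itself. $\Pdisl$ is exact on each cube, so with independent rotations on the cubes there is no circulation obstruction at all. The missing step is tying the cube rotations together. This is what \lemref{cor:FJMHA} does, using a chain of cubes and a discrete Poincar\'e inequality, at the price of the constant $C(1+\rho^2/h^2)$ for a width-$h$ ring at radius $\rho$. With this constant, one width-$h$ ring at $r=\e$ yields only $h^3/\e$ after normalization. The $h^2$ comes from summing the $\sim\e/h$ such rings that fill $\e<r<2\e$. Equivalently, it comes from a plate-type constant $(\e/h)^2$ on $\{\e<r<2\e\}\times(0,h)$, which needs a two-dimensional version of the Poincar\'e argument.

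Relatedly, your claim that a ring with $r_k>h$ still yields $\min\{\e^2,h^2\}$ is not what the method gives. Rigidity plus circulation give only $\e^2(h/r_k)^2$ there. If your claim held for every dyadic ring, summing would give $h^2\log(1/\e)$ for $h\ll\e$, which is exactly the bound the paper says annulus-summation arguments cannot reach.
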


The lower bound is obtained by summing up lower bounds for the elastic energy of annuli of width $h$.
The upper bound consists of the minimum of three terms:
\begin{itemize}
\item The $\e^2\log(1/\e)$ bound corresponds to a flat configuration of the mid-surface (zero bending). It is optimal when $h$ is large.
\item The $h^2 \log(1/\e)$ bound corresponds to an isometric immersion of the mid-surface (zero stretching). It is optimal when $h$ is very small. 
\item The $h^2 + \e^2$ bound corresponds to a single-fold construction due to Conti \cite{Con23}, that has similar bending and stretching content in the mid-surface. It is optimal when $h$ and $\e$ are of the same order.
\end{itemize}
Note that if we assume that $\e = h^\beta$ for some $\beta\ge 0$, then the bound is tight for $\beta\ge 1$, and scales like $\e^2\log(1/\e)$ for $\beta>1$ and like $\e^2 = h^2$ when $\beta = 1$.
For $\beta<1$, however, we have a lower bound of $h^2$ and an upper bound of $h^2\log(1/\e)$. 
We believe that the upper bound is tight, i.e., that in this case there is a logarithmic divergence on top of the $h^2$ scaling.
Indeed, this is the case in the iterated limit of taking $h\to 0$ while fixing $\e$, and then taking $\e\to 0$. 
Specifically, the $\Gamma$-limit of $h^{-2}\Edisl$ is a Kirchhoff bending model \cite{LP11,KS14}, which was proved to diverge like $\log(1/\e)$ \cite{Kup17,KMP25}.
Note that the proof of this divergence is based on propagation of the bending energy from the inner boundary outwards, and thus an approach based on summation of independent contributions of different annuli, like in our lower bound (see below), cannot be expected to bridge this gap; a new idea or approach is needed.

Finally, we note the relation of our results to a well-known conjecture in the physics literature by Nelson and Peliti \cite{nelson1987fluctuations}. 
They conjectured that the energy of a thin body with a dislocation (divided by $h^2$) does not diverge as $\e,h\to 0$; to be more precise, their conjecture concerns the case where the mid-surface is isometrically immersed (zero stretching).
Seung and Nelson \cite{SN88} provided numerical evidence for that, but their model does allow for some stretching.
The lower bound of $\log(1/\e)$ for isometric immersions obtained in \cite{Kup17,KMP25} refutes the original (zero stretching) Nelson--Peliti conjecture, yet the upper bound \eqref{eq:disloc_lb} implies that it is valid when one allows for stretching, at least when $\e\sim h^\beta$ for $\beta\ge 1$. 

\paragraph{Proof strategy.}
For the lower bounds, a key observation is that if the prestrain map $\calP$ of an elastic body is curl-free, then one is able to obtain an FJM-type geometric rigidity estimate without error terms: 
That is, for every configuration $f\in W^{1,2}(\M;\R^3)$ there exists $Q\in \SO(3)$ such that
\beq
\label{eq:rigidity}
\int_{\M}   |df - Q \calP|^2 \, \dVol{\calP} 
\le 
C \, \int_{\M} \dist^2(df\circ \calP^{-1},\SO(3))\,  \dVol{\calP}  
\eeq
for some $C>0$ depending only on $\M$ and $\calP$.
This should be compared to the case of a general $\calP$, in which one has an additional error term on the right-hand side (see, e.g., \cite[Theorem 2.3]{LP11}, \cite[Sec.~2.2.1]{MS19}).
Indeed, if $\calP$ is curl-free, then the induced metric is locally-Euclidean \cite[Prop.~3.2]{Mao25}, and thus $\M$ can be covered by subdomains which are isometric to domains in $\R^3$, for which we can use the standard rigidity estimate \cite{FJM02b}.
The constant $C$ then depends on the rigidity constants of these subdomains and their intersections. 
This observation was used in the derivation of strain-gradient plasticity as a limit for bodies with many dislocations in the non-Euclidean elasticity model \cite[Theorem~4.4]{kupferman2026volterra} (a similar idea was used before, in a slightly different context, in \cite[Prop.~3.3]{SZ12}). 
The lower bounds in this work are based on obtaining these estimates, controlling the relevant rigidity constants in terms of $h,\e,\alpha$, and estimating the left-hand side using the fact that $\calP$ is not exact, i.e., not a derivative.

The upper bounds are based on explicit ansatzes for the mid-surface, estimates of the stretching and bending of these configurations, and the relations between 2D models and the full 3D energy as described above and detailed in Appendix~\ref{sec:2D3D}.

\paragraph{Notations}
Incompatible elasticity represents bodies as Riemannian manifolds $(\M,G)$. A metric $G$ induces norms, denoted by $|\cdot|_G$, on vector fields, forms, and their tensor products. The norm of a bundle map between vector bundles endowed with metrics $G_1$ and $G_2$ is denoted by $|\cdot|_{G_1,G_2}$. The $L^p$ norm of sections is always taken with respect to these pointwise norms and the volume form. 
Throughout this work, we use the symbol $G$ to denote metrics of three-dimensional manifolds and the symbol $g$ to denote metrics of two-dimensional manifolds.
We use the generic notation $C$ for a positive constant that does not depend on specified parameters;  its value may change from one line to another. 

\paragraph{Structure of the paper.}
In \secref{sect:lowbound} we derive the rigidity estimates of the type \eqref{eq:rigidity} and obtain the lower bounds in Theorems~\ref{thm:disclination}--\ref{thm:dislocation}.
In \secref{sect:upbounds} we prove the upper bounds in Theorems~\ref{thm:disclination}--\ref{thm:dislocation} by constructing the various ansatzes.
In Appendix~\ref{sec:2D3D} we prove the relations between reduced 2D models and the full 3D model.

\paragraph{Acknowledgements.}
We are grateful to Sergio Conti for sharing with us his construction of the ansatz for embedding a thin body with a dislocation.
RK was partially supported by ISF Grant 560/22,  CM was partially supported by ISF grant 2304/24 and BSF grant 2022076, and DPG was supported by the Zuckerman STEM Leadership Program. 
Part of this work was written while CM was visiting the University of Toronto and the Fields Institute; CM is grateful for their hospitality.

\section{Lower bounds}
\label{sect:lowbound}

In this section, we prove the parts of \thmref{thm:disclination} (disclinations) and \thmref{thm:dislocation} (dislocations) concerning the lower bounds.

\subsection{{Dislocations}}

We start by noting the following bounds concerning the geometry of a dislocation, induced by the prestrain map $\Pdisl$:

\begin{proposition}
\label{prop:prelim}
Let $\Pdisl$ be given by \eqref{eq:implantdisl}, {with} $\Gdisl$ the corresponding metric. {Let} $\iota: \Mdisl\to\R^3$ denote the configuration corresponding to the coordinate inclusion map,
namely,
\[
\iota(r,\vp,z) = (r\,\cos\vp,r\,\sin\vp,z), 
\]
and {let} $\euc$ denote the Euclidean metric in $\R^3$. Then: 
\begin{itemize}
\item[(a)] \[||dr|_{\Gdisl} -1 | \le \frac{\e}{2\pi r - \e}
\Textand
||r\,d\vp|_{\Gdisl} -1 | \le \frac{\e}{2\pi r - \e}.\]

\item[(b)] 
\beq
|\Pdisl - d\iota|_{\Gdisl,\euc} \le  \frac{\e}{r}, \Textand |d\iota^{-1} - \Pdisl^{-1}|_{\euc,\Gdisl} \leq C \frac{\e}{r},
\label{eq:disc_iota_est}
\eeq
for some absolute constant $C$. 
In particular, the metrics $\Gdisl$ are equivalent to the Euclidean metric on the domain $\{r>\e\}$, with the equivalence constant independent of $\e$.
\end{itemize}
\end{proposition}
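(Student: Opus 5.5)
The plan is to compute everything pointwise in the frame $\{\partial_r,\partial_\vp,\partial_z\}$, using two facts. By \eqref{eq:metric}, $\Pdisl$ is an isometry from $(T_p\Mdisl,\Gdisl)$ onto $(\R^3,\euc)$. Likewise $d\iota$ is an isometry from $(T_p\Mdisl,\iota^*\euc)$ onto $\R^3$. Since $I_{3\times3}$ in \eqref{eq:implantdisl} is $d\iota$, we have
\[
\Pdisl - d\iota = \frac{\e}{2\pi}\, e_1\otimes d\vp .
\]
The whole proof therefore reduces to estimating $|d\vp|$ in the two metrics. In what follows I write $e_r=\cos\vp\, e_1+\sin\vp\, e_2$ and $e_\vp=-\sin\vp\, e_1+\cos\vp\, e_2$.

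\emph{First estimate in (b).} Because $\Pdisl$ is a $\Gdisl$-isometry, $|\Pdisl-d\iota|_{\Gdisl,\euc}=\frac{\e}{2\pi}|d\vp\circ\Pdisl^{-1}|_{\euc}$. The images of the coordinate vectors are $\Pdisl\partial_r=e_r$, $\Pdisl\partial_z=e_3$, and $\Pdisl\partial_\vp=r e_\vp+\frac{\e}{2\pi}e_1$. The covector $d\vp\circ\Pdisl^{-1}$ vanishes on $\mathrm{span}\{e_r,e_3\}$ and equals $1$ on $\Pdisl\partial_\vp$. Its Euclidean norm is therefore the reciprocal of the component of $\Pdisl\partial_\vp$ along $e_\vp$, which is $r-\frac{\e}{2\pi}\sin\vp\ge r-\frac{\e}{2\pi}$. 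This gives
\[
|\Pdisl-d\iota|_{\Gdisl,\euc}\le \frac{\e}{2\pi r-\e}\le\frac{\e}{r},
\]
where the last step uses $r>\e$ on $\Mdisl$.

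\emph{Part (a).} Write $d\iota\circ\Pdisl^{-1}=\mathrm{Id}-(\Pdisl-d\iota)\Pdisl^{-1}$. By the previous step, this map $\R^3\to\R^3$ has operator norm, and inverse norm, within $1\pm\delta$, where $\delta=\e/(2\pi r-\e)$. Now $|dr|_{\Gdisl}=|dr\circ d\iota^{-1}\circ(d\iota\circ\Pdisl^{-1})|_\euc$, and $|dr\circ d\iota^{-1}|_\euc=|r\,d\vp\circ d\iota^{-1}|_\euc=1$. Hence $|dr|_{\Gdisl}$ and $|r\,d\vp|_{\Gdisl}$ both lie in $[1-\delta,1+\delta]$, which is (a). If the lower side $1-\delta$ is not sharp enough, I would instead compute $|r\,d\vp|_{\Gdisl}=r/(r-\frac{\e}{2\pi}\sin\vp)$ exactly, and get $|dr|_{\Gdisl}$ exactly in the same way from the orthogonal component of $e_r$ against $\mathrm{span}\{\Pdisl\partial_\vp,e_3\}$.

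\emph{Second estimate in (b), and metric equivalence.} Factor
\[
d\iota^{-1}-\Pdisl^{-1}=\Pdisl^{-1}(\Pdisl-d\iota)\,d\iota^{-1}.
\]
Here $\Pdisl^{-1}:\R^3\to(T\Mdisl,\Gdisl)$ and $d\iota^{-1}:\R^3\to(T\Mdisl,\iota^*\euc)$ both have norm $1$. The middle factor, measured from $\iota^*\euc$ to $\euc$, is $\frac{\e}{2\pi}|d\vp|_{\iota^*\euc}=\frac{\e}{2\pi r}$. This gives the bound with $C=1/2\pi$, and any cruder route through (a) also yields an absolute constant. Finally, $r>\e$ gives $\delta\le 1/(2\pi-1)<1$, so $d\iota\circ\Pdisl^{-1}$ and its inverse are uniformly bounded independently of $\e$. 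That is exactly the equivalence of $\Gdisl$ with the Euclidean metric $\iota^*\euc$.

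The only step requiring care is the first one: handling the mixed norms $|\cdot|_{\Gdisl,\euc}$ correctly, and computing the dual norm of $d\vp$ with respect to the skewed frame. Everything else follows from composing isometries.
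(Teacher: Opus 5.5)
Your proof is correct. The paper does not actually prove this proposition. It defers to Eq.~(3.9) and Proposition~3.10 of \cite{kupferman2026volterra}, so your argument is a self-contained replacement rather than a variant of an argument given here.

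Your key observation is that $\Pdisl-d\iota=\frac{\e}{2\pi}e_1\otimes d\vp$ is rank one. This reduces everything to one dual-norm computation, $|d\vp|_{\Gdisl}=\bigl(r-\tfrac{\e}{2\pi}\sin\vp\bigr)^{-1}$. Because the difference maps are rank one, it also does not matter whether $|\cdot|_{\Gdisl,\euc}$ denotes the Frobenius or the operator norm.

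Your route gives explicit and slightly sharper information:
\begin{itemize}
\item the first estimate in (b) holds with $\e/(2\pi r-\e)$;
\item the second holds with $C=1/2\pi$;
\item the metric-equivalence constants are $\frac{2\pi-2}{2\pi-1}$ and $\frac{2\pi}{2\pi-1}$.
\end{itemize}
These are independent of $\e$ precisely because the core radius was set to $\rho=\e$, so $r>\e$ on $\Mdisl$. The citation buys brevity and places the estimates inside the fuller analysis of the edge-dislocation geometry in that reference, which the statement itself does not need.

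One wording fix in part (a). What you actually use is that the singular values of $d\iota\circ\Pdisl^{-1}$ lie in $[1-\delta,1+\delta]$. The norm of its inverse is bounded by $1/(1-\delta)$, not by $1+\delta$. This does not affect the argument.
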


\begin{proof}
This result is proved in \cite[Sec.~3]{kupferman2026volterra}.
Specifically, (a) is Eq.~(3.9) and (b) is part of Proposition 3.10 in \cite{kupferman2026volterra}. 
\end{proof}

Towards obtaining an FJM-type rigidity estimate for bodies with dislocations, we begin by estimating the geometric rigidity constant (of the  Euclidean FJM rigidity theorem \cite{FJM02b}) of Euclidean cylindrical half-annuli, which will be needed in the proof. 

\begin{lemma}
\label{cor:FJMHA}
For $0<\rho_0<\rho_1$ and $h>0$, denote by
\[
\HA^h_{\rho_0,\rho_1}
=
\{(x,y,z)\in\mathbb R^3:\ x>0,\ \rho_0^2<x^2+y^2<\rho_1^2,\ 0<z<h\}
\]
the cylindrical half-annulus of height $h$ and inner and outer radii $\rho_0,\rho_1$. 
Then, for all $\rho,h>0$, the cylindrical half-annulus $\HA_{\rho,\rho+h}^h$ admits a rigidity constant $C(1+(\rho/h)^2)$, for some absolute constant $C$.
That is, for every $f\in W^{1,2}(\HA_{\rho,\rho+h}^h;\R^3)$ there exists a rotation $Q \in \SO(3)$, such that
\beq
\int_{\HA_{\rho,\rho+h}^h}   |df - Q|^2 \, dx
\le 
 C\brk{1+\frac{\rho^2}{h^2}}\, \int_{\HA_{\rho,\rho+h}^h} \dist^2(df ,\SO(3))\, dx.
\label{eq:FJMHA} 
\eeq
\end{lemma}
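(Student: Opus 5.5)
We prove the estimate by covering the half-annulus with unit-size pieces and chaining the local rotations. First rescale by $h$: the map $x\mapsto x/h$ sends $\HA^h_{\rho,\rho+h}$ to $\HA^1_{\lambda,\lambda+1}$ with $\lambda=\rho/h$. Both sides of \eqref{eq:FJMHA} scale the same way, with $f$ replaced by $h^{-1}f(h\,\cdot)$. So it suffices to show that $\HA^1_{\lambda,\lambda+1}$ has rigidity constant at most $C(1+\lambda^2)$.

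\emph{Case $\lambda\le 1$.} Here the domain is a uniformly Lipschitz set of unit size. It is bi-Lipschitz equivalent, with uniformly bounded constants, to a fixed reference domain such as $\HA^1_{1,2}$. The rigidity constant of \cite{FJM02b} is invariant under dilations and stable under uniformly bi-Lipschitz perturbations within a family of uniformly Lipschitz domains. We therefore expect a uniform constant $C$ for all $\lambda\in(0,1]$.

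If one prefers to avoid the bi-Lipschitz stability claim, the case $\lambda\le1$ can also be handled by a covering argument. We cover the domain by boundedly many unit-size pieces with uniform geometry, adapting to the inner radius as $\lambda\to0$ through a Whitney-type decomposition near the inner arc. Each piece is a cube or a sector piece of bounded eccentricity.

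\emph{Case $\lambda>1$.} The half-annulus is a thin curved slab of unit cross-section and length $L\sim\pi\lambda$. We divide it angularly into $N\sim\lambda$ consecutive curved blocks $B_1,\dots,B_N$, each of diameter of order $1$. Each block is bi-Lipschitz to a unit cube with uniformly bounded constants, because its curvature radius is at least $\lambda>1$. Hence each block has a uniform FJM constant, and also each union $B_i\cup B_{i+1}$ does.

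Apply FJM on each $B_i\cup B_{i+1}$ and on each $B_i$ to obtain rotations $Q_i$ with $\int_{B_i}|df-Q_i|^2\le C e_i$, where $e_i:=\int_{B_i\cup B_{i+1}}\dist^2(df,\SO(3))$ (and similarly with a single block). Comparing on overlaps, with $|B_i|\sim1$, gives $|Q_i-Q_{i+1}|^2\le C(e_i+e_{i+1})$. Take $Q:=Q_1$, or better the rotation of a middle block. Then for every $i$,
\[
|Q_i-Q|^2\le\Big(\sum_{j}|Q_j-Q_{j+1}|\Big)^2\le N\sum_j |Q_j-Q_{j+1}|^2\le C\lambda\, E,
\]
where $E=\int\dist^2(df,\SO(3))$. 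Summing over the blocks,
\[
\int|df-Q|^2\le 2\sum_i\int_{B_i}|df-Q_i|^2+2\sum_i|B_i|\,|Q_i-Q|^2\le CE+C\lambda\cdot\lambda E,
\]
which is $C(1+\lambda^2)E$, as required.

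The main obstacle is mild: justifying that the local rigidity constants of the curved blocks, and of the pairwise unions, are uniform in $\lambda$ and in $i$. We plan to handle this by noting that all blocks are congruent, since rotations about the $z$-axis map one to another. Their shapes converge to a fixed box as $\lambda\to\infty$, so a uniform bi-Lipschitz comparison to a fixed cube, or to a fixed pair of adjacent cubes, suffices. The $\lambda\le1$ case requires the analogous uniformity over a compact range of shapes, possibly degenerating at the origin. There we use the covering argument above instead, since the sector near the origin is star-shaped, which also yields a uniform constant.
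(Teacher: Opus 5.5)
Your proposal is correct and follows essentially the paper's route: scale invariance, a uniform constant from uniform bi-Lipschitz equivalence when $\rho\lesssim h$, and, in the thin regime, FJM on $N\sim\rho/h$ unit blocks and on adjacent pairs, then chaining the rotations, which produces the $N^2\sim\rho^2/h^2$ factor. The differences are cosmetic: you normalize to unit cross-section rather than unit length, and you telescope from a fixed block instead of using the discrete Poincar\'e inequality with the projected average. You also chain directly on the curved blocks rather than first identifying the half-annulus with a straight rod; this is, if anything, slightly cleaner, since it avoids transferring a degenerating rigidity constant through a bi-Lipschitz map.
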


\begin{proof}
The FJM rigidity constant  is invariant under scaling \cite{FJM02b}.
Thus, in order to cover all parameter regimes of $\rho$ and $h$ it is sufficient to consider $\HA_{\delta,1+\delta}^1$ and $\HA_{1,1+\delta}^\delta$ , with $\delta\le 1$ corresponding to $\rho/h$ in the first case and $h/\rho$ in the second case. 
Moreover, uniformly bi-Lipschitz domains admit a uniform rigidity constant. 
Since it can be easily shown that the domains $\HA_{\delta,1+\delta}^1$, with $\delta\le 1$, are uniformly bi-Lipschitz, they admit a rigidity constant $C$. 

Similarly, $\HA_{1,1+\delta}^\delta$ for $\delta\in [1/10,1)$ can easily be shown to be bi-Lipschitz equivalent to $\HA_{1,1+1/10}^{1/10}$, whereas $\HA_{1,1+\delta}^\delta$ for $\delta<1/10$ can easily be shown to be uniformly bi-Lipschitz equivalent to rods of length $1$ and a square cross section of side length $\delta$. Rods of these dimensions have a rigidity constant that scales like $1/\delta^2$; one can deduce {this scaling} from a similar argument for plates as in \cite[Theorem~4.8]{Lew23}; {for completeness, we give a different, self-contained proof, similar in spirit to the argument in \cite[Theorem~2.1]{MM03}.}

Let $N:=\left\lceil \frac{1 }{\delta}\right\rceil $, and partition the rod $(0,1)\times (0,\delta)^2$ into $N$ boxes $K_i = (i/N,(i+1)/N)\times (0,\delta)^2$ for $i=0,\ldots,N-1$.

Therefore, there exists $C>0$ such that for each $i$ there exists
$R_i\in SO(3)$ such that
\[
\int_{K_i}|df-R_i|^2\,dx
\leq
C\int_{K_i}\dist^2(df,SO(3))\,dx .
\]
Moreover, for each pair of adjacent boxes $K_i,K_{i+1}$, the union
$K_i\cup K_{i+1}$ is also uniformly bi-Lipschitz equivalent to a cube. Hence there exists $R_{i,i+1}\in SO(3)$ such that
\[
\int_{K_i\cup K_{i+1}} |df-R_{i,i+1}|^2\,dx
\leq
C\int_{K_i\cup K_{i+1}}\dist^2(df,SO(3))\,dx .
\]
Since $|K_i|\simeq \delta^3$, the preceding two estimates imply
\begin{equation}
\label{eq:diffdisc}
|R_i-R_{i+1}|^2
\leq
C \delta^{-3}
\int_{K_i\cup K_{i+1}}\dist^2(df,SO(3))\,dx.
\end{equation}

Let
\[
\overline R:=\frac1N\sum_{i=0}^{N-1} R_i .
\]
By the discrete Poincar\'e inequality on the discrete space $\{0,\ldots,N-1\}$, 
\[
\sum_{i=0}^{N-1}|R_i-\overline R|^2
\leq
C N^2
\sum_{i=0}^{N-2}|R_i-R_{i+1}|^2 .
\]
Let $R\in SO(3)$ be a nearest-point projection of $\overline R$ onto
$SO(3)$. Since each $R_i$ belongs to $SO(3)$,
\[
|\overline R-R|\leq |\overline R-R_i|
\]
for every $i$, and therefore
\[
|R_i-R| \leq |R_i-\overline R| + |\overline R-R| \leq 2|R_i-\overline R|.
\]
It follows that
\[
\sum_{i=0}^{N-1}|R_i-R|^2
\leq
C N^2
\sum_{i=0}^{N-2}|R_i-R_{i+1}|^2 .
\]

We now estimate
\[
\begin{aligned}
\int_{{(0,1)\times (0,\delta)^2}} |df-R|^2\,dx
&\leq
C\sum_{i=0}^{N-1}\int_{K_i}|df-R_i|^2\,dx
+
C\sum_{i=0}^{N-1}|K_i|\,|R_i-R|^2  \\
&\leq
C\int_{{(0,1)\times (0,\delta)^2}}\dist^2(df,SO(3))\,dx
+
C \delta^3 N^2
\sum_{i=0}^{N-2}|R_i-R_{i+1}|^2 .
\end{aligned}
\]
Using the estimate on the differences of adjacent rotations \eqref{eq:diffdisc}, we obtain
\[
\int_{{(0,1)\times (0,\delta)^2}} |df-R|^2\,dx
\leq
C N^2
\int_{{(0,1)\times (0,\delta)^2}} \dist^2(df,SO(3))\,dx .
\]
Since $N\simeq \delta^{-1}$, we are done.
\end{proof}

The above estimate for Euclidean half-annuli yields the following rigidity estimates for dislocations:

\begin{proposition}
\label{prop:FJMdisloc}
Let $\e\le \rho \leq 1-h$ and let $h<1/2$.
Denote by $\M_{\rho,\rho+h}^h \subset\Mdisl$ the annulus obtained by restricting the radial coordinate between $\rho$ and $\rho+h$.
Then there exists for every $f\in W^{1,2}(\M_{\rho,\rho+h}^h;\R^3)$  a rotation $Q \in \SO(3)$, such that
\beq
\label{eq:FJMdisloc}
\int_{\M_{\rho,\rho+h}^h}   |df - Q \Pdisl|_{\Gdisl,\euc}^2 \, \VolGe 
\le 
C \brk{1+\frac{\rho^2}{h^2}}\, \int_{\M_{\rho,\rho+h}^h} \dist^2(df\circ \Pdisl^{-1},\SO(3))\, \VolGe, 
\eeq
where $C>0$ is an absolute constant.
\end{proposition}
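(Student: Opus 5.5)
The plan is to cover the annulus $\M_{\rho,\rho+h}^h$ by two overlapping half-annuli on which $\Pdisl$ is exact. On each piece, Lemma~\ref{cor:FJMHA} then applies after a uniformly bi-Lipschitz change of variables. The two resulting rotations are glued using the overlap.

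Concretely, let $U_1=\{\vp\in(-\pi/2,\pi/2)\}$ and $U_2=\{\vp\in(\pi/2,3\pi/2)\}$, slightly enlarged so that they overlap in two sectors. On each simply connected $U_j$ the form $\Pdisl$ is curl-free, hence exact: $\Pdisl=d\Phi_j$ with
\[
\Phi_j=\iota+\frac{\e}{2\pi}\vp_j e_1,
\]
where $\vp_j$ is a continuous branch of the angle on $U_j$. On the overlap sectors the two maps differ by a constant translation, $0$ on one sector and $\e e_1$ on the other. By Proposition~\ref{prop:prelim}(b) and $r\ge\rho\ge\e$, we have $|d\Phi_j-d\iota|\le \e/r$. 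Hence $\Phi_j$ is a bi-Lipschitz map, with absolute constants, from $U_j$ onto its image, at least when $\e/r$ is small. For $r$ comparable to $\e$ I would still argue that the constant stays bounded, using part (a). The image $\Phi_j(U_j)$ is a perturbed half-annulus of radii $\rho,\rho+h$ and height $h$. I would check that it is uniformly bi-Lipschitz to $\HA^h_{\rho,\rho+h}$ via a map whose Lipschitz constants are independent of $\rho,h,\e$. One way is to compose $\iota\circ\Phi_j^{-1}$ with a fixed rotation, reducing the question to $\HA$ up to a bounded distortion.

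Rigidity constants transform under bi-Lipschitz maps with a controlled factor. Applying Lemma~\ref{cor:FJMHA} to $g_j=f\circ\Phi_j^{-1}$ therefore gives rotations $Q_j$ with
\[
\int_{U_j}|df-Q_j\Pdisl|^2\le C\brk{1+\rho^2/h^2}\int_{U_j}\dist^2(df\circ\Pdisl^{-1},\SO(3)).
\]
Here I use $d g_j=df\circ\Pdisl^{-1}$ and the fact that $\dVol{\Pdisl}$ is the pullback of Lebesgue measure under $\Phi_j$. Next, on one overlap sector $V$, of volume $\sim\rho h^2$ at least when $\rho\gtrsim h$ and of volume $\gtrsim h^3$ in general, the triangle inequality together with $|\Pdisl\,\cdot|\sim1$ gives
\[
|V|\,|Q_1-Q_2|^2\le C\brk{1+\rho^2/h^2}\int_{\M_{\rho,\rho+h}^h}\dist^2(df\circ\Pdisl^{-1},\SO(3)).
\]
Since the volume of $U_1$ is comparable to $|V|$, taking $Q=Q_1$ and bounding $\int_{U_2}|Q_2-Q_1|^2|\Pdisl|^2$ by $C|U_2|\,|Q_1-Q_2|^2$ yields \eqref{eq:FJMdisloc} with the same order of constant. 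This requires choosing the overlap to be a fixed fraction of the annulus, for instance angular width $\pi/4$. The uniform bi-Lipschitz property of such overlap sectors is another instance of the argument in Lemma~\ref{cor:FJMHA}.

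The main obstacle is the uniform bi-Lipschitz control when $\rho$ is close to $\e$. There $|d\Phi_j-d\iota|$ can be of order one, so the perturbation argument must be replaced by the explicit structure $\Phi_j=\iota+\frac{\e}{2\pi}\vp e_1$. In these coordinates the map is a shear in the angular direction whose Jacobian is bounded below by part (a). I would first try to show directly that $\Phi_j$ is injective on $U_j$ with $|d\Phi_j^{-1}|\le C$. Failing that, I would shrink $U_j$ to angular width slightly less than $\pi$, where injectivity is easier, and cover the annulus with three sectors instead of two.
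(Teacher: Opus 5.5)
Your proposal follows essentially the same route as the paper. You write $\Pdisl=d\Phi_j$ on half-annuli, transfer \lemref{cor:FJMHA} via $d(f\circ\Phi_j^{-1})=df\circ\Pdisl^{-1}$ and $\Phi_j^*dx=\dVol{\Pdisl}$, and glue the rotations on an overlap sector; the only difference is that the paper uses three genuine half-annuli with $\Omega_1$ meeting the other two (your fallback) rather than two enlarged ones. The obstacle you flag near $r\sim\e$ is harmless. The uniform bounds on $d\Phi_j^{\pm1}$ are already in \propref{prop:prelim}(b), and $\Phi_j$ is a shear in the $e_1$-direction, $(x,y,z)\mapsto(x+\frac{\e}{2\pi}\vp,y,z)$, whose $x$-derivative $1-\frac{\e\sin\vp}{2\pi r}\ge 1-\frac{1}{2\pi}$ is positive for $r\ge\e$, so it is injective on each half-annulus (and, one checks, also on your modestly enlarged ones).
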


\begin{proof}
We proceed as in the proof of \cite[Theorem~4.4]{kupferman2026volterra}, which proves a similar geometric rigidity estimate for 2-dimensional annuli of fixed width. 
First, we cover $\M_{\rho,\rho+h}^h$ with 3 overlapping cylindrical half-annuli, as shown in \figref{fig:covering}, denoted $\Omega_{1}, \Omega_{2}, \Omega_{3}$; note that these are half-annuli with respect to the coordinates, but not with respect to the intrinsic metric induced by the prestrain map $\Pdisl$.

Since the domains $\Omega_i$ are simply-connected and $\Pdisl$ is curl-free, there exist isometric immersions $\Psi_i : \Omega_i \to \R^3$, that is $d\Psi_i = \Pdisl$.
In fact, $\Psi_i$ are isometric embeddings: up to rotations, they are functions of the type $\Psi(r,\vp,z) = r e_r + z e_3 + (\e/2\pi)\vp e_1$, where $e_r = (\cos\vp,\sin\vp,0)$, which is one-to-one on $\{x_1>0\}\cap \{r>\e\}$.
By the distortion estimates \eqref{eq:disc_iota_est}, the domains $\Psi_i(\Omega_i)$ are bi-Lipschitz equivalent to $\Omega_i$, uniformly in $\e,\rho,h$.
By \lemref{cor:FJMHA}, there exists a universal $C>0$ such that for every $\tilde{f}\in W^{1,2}(\Psi_i(\Omega_i);\R^3)$ there exists a rotation $Q_i \in \SO(3)$, such that
\beq
\int_{\Psi_i(\Omega_i)}   |d\tilde{f} - Q_i |_{\euc,\euc}^2 \, dx 
\le 
C \brk{1+\frac{\rho^2}{h^2}}\, \int_{\Psi_i(\Omega_i)} \dist^2(d\tilde{f},\SO(3))\, dx.
\eeq
Changing coordinates back to $\Omega_i$, we get 
\beq
\label{eq:FJMdisloc}
\int_{\Omega_i}   |df - Q_i \Pdisl|_{\Gdisl,\euc}^2 \, \VolGe 
\le 
C \brk{1+\frac{\rho^2}{h^2}}\, \int_{\Omega_i} \dist^2(df\circ \Pdisl^{-1},\SO(3))\, \VolGe. 
\eeq
Since $\Omega_1$ overlaps both $\Omega_2$ and $\Omega_3$, $Q_1$  must be close to both $Q_2$ and $Q_3$, because they approximate the same deformation gradient on the overlap (see the proof of \cite[Theorem~4.4]{kupferman2026volterra}).
Hence, choosing for example $Q=Q_1$, we obtain \eqref{eq:FJMdisloc}. 
\end{proof}

\begin{figure}[htbp]
    \centering 
    \includegraphics[width=0.7\textwidth]{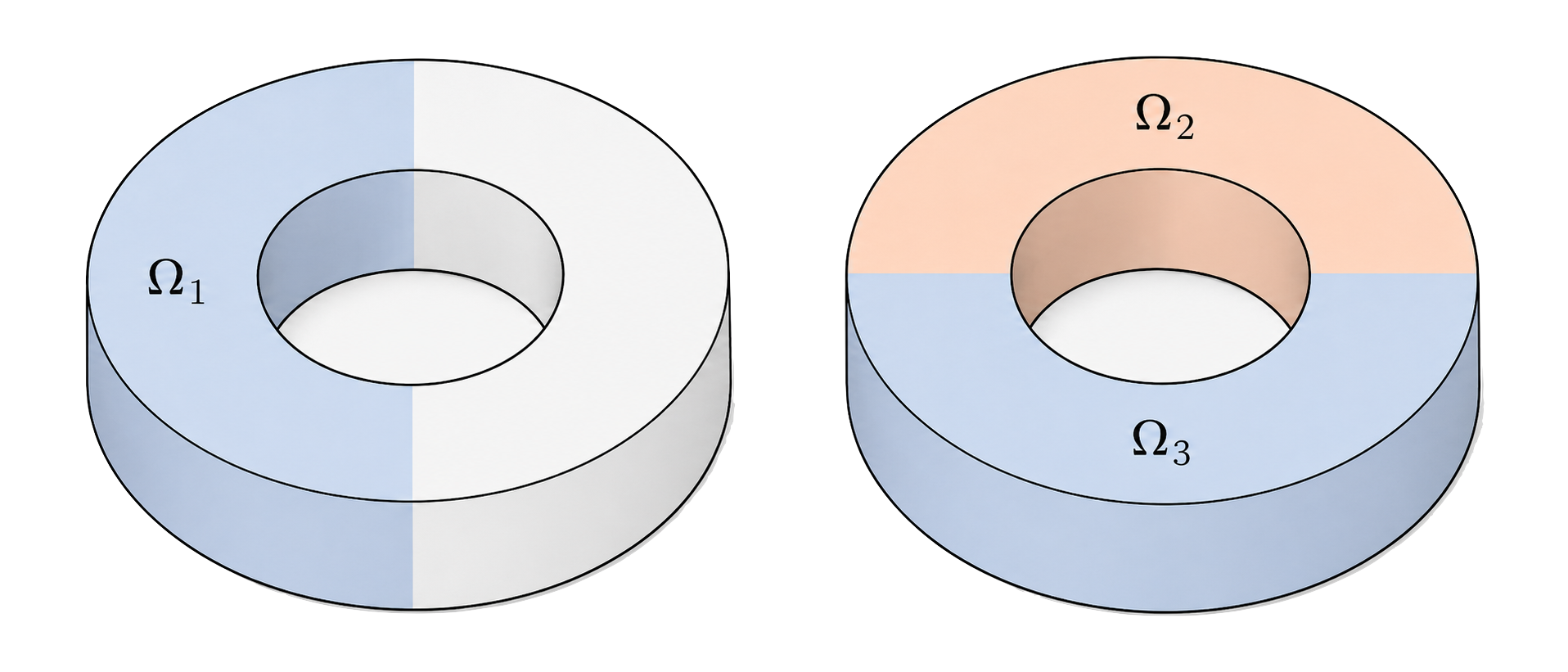} 
    \caption{Depiction of the covering used in the proof of \propref{prop:FJMdisloc}.}
    \label{fig:covering} 
\end{figure}

We  use the previous proposition to derive a lower bound for the elastic energy in a thin ring of a thin body with a dislocation. 

\begin{lemma}
\label{lem:lowbound}
Let $\e\le \rho \leq 1-h$ and let $h<1/2$.
Then for every $f\in W^{1,2}(\M_{\rho,\rho+h}^h;\R^3)$ and every rotation $Q \in \SO(3)$, there holds
\[
\int_{\M_{\rho,\rho+h}^h}   |df - Q \Pdisl|_{\Gdisl,\euc}^2 \, \VolGe \ge C \e^2 h\log\brk{1+\frac{h}{\rho}}
\]
for some absolute constant $C>0$.
\end{lemma}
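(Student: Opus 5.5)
The plan is to exploit the non-exactness of $\Pdisl$: along every closed loop encircling the core, the integral of $df$ vanishes, whereas the integral of $Q\Pdisl$ equals $Q$ applied to the Burgers vector, which has length $\e$. I will slice the annulus $\M_{\rho,\rho+h}^h$ into circles $\gamma_{r,z} = \{(r,\vp,z) : \vp\in\bbS^1\}$ with $r\in(\rho,\rho+h)$ and $z\in(0,h)$. For $f\in W^{1,2}$, almost every such circle is a loop along which $f$ is absolutely continuous, so
\[
\oint_{\gamma_{r,z}} (df - Q\Pdisl)(\partial_\vp)\, d\vp = 0 - Q\oint_{\gamma_{r,z}}\Pdisl(\partial_\vp)\,d\vp = -\,Q\Big(\oint_0^{2\pi}  r e_\vp\, d\vp + \e e_1\Big) = -\e\, Q e_1 .
\]
Here I use $I_{3\times3}(\partial_\vp) = r e_\vp$, whose integral over the circle vanishes, together with the term $\frac{\e}{2\pi}e_1 d\vp$. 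Hence the left-hand side has Euclidean norm exactly $\e$.

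Next I apply Cauchy--Schwarz on each circle:
\[
\e^2 \le 2\pi \int_0^{2\pi} |(df - Q\Pdisl)(\partial_\vp)|^2\, d\vp \le 2\pi \int_0^{2\pi} |df - Q\Pdisl|_{\Gdisl,\euc}^2\, |\partial_\vp|_{\Gdisl}^2\, d\vp .
\]
By \propref{prop:prelim}(a), applied to the dual vector (or directly, since $\Gdisl(\partial_\vp,\partial_\vp) = r^2 + \frac{\e}{\pi} r\sin\vp\cdot(\ldots)$ is comparable to $r^2$ for $r\ge\e$), we have $|\partial_\vp|_{\Gdisl} \le C r$. This gives
\[
\int_0^{2\pi} |df - Q\Pdisl|^2_{\Gdisl,\euc}\, d\vp \ge \frac{c\,\e^2}{r^2}.
\]
The one point to double-check is that the comparability holds uniformly down to $r=\e$. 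The determinant of $\Pdisl$ is $r$ times a factor of the form $1 + O(\e/r)$, which is only bounded below away from $0$ when $r$ is somewhat larger than $\e/(2\pi)$. Since $r\ge\rho\ge\e > \e/2\pi$, \propref{prop:prelim}(b) gives uniform equivalence with the Euclidean metric on $\{r>\e\}$, and this is exactly what I will invoke. It is the only delicate point.

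Finally, I integrate in $r$ and $z$. The volume form $\VolGe$ is comparable to $r\,dr\,d\vp\,dz$ by the same metric equivalence. Therefore
\[
\int_{\M_{\rho,\rho+h}^h} |df - Q\Pdisl|^2_{\Gdisl,\euc}\,\VolGe \ge c\int_0^h\!\int_\rho^{\rho+h} \frac{\e^2}{r^2}\, r\, dr\, dz = c\,\e^2 h \log\brk{\frac{\rho+h}{\rho}} = c\,\e^2 h\log\brk{1+\frac{h}{\rho}},
\]
which is the claimed bound, with a constant independent of $Q$, $f$, $\e$, $\rho$ and $h$.
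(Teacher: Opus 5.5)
Your proof is correct and follows the paper's argument essentially step by step. You slice the annulus into the circles $\{r,z=\text{const}\}$, use that $\oint df(\partial_\vp)\,d\vp=0$ while $\oint \Pdisl(\partial_\vp)\,d\vp=\e e_1$, and apply Cauchy--Schwarz (the paper's Jensen step). You then use the uniform equivalence of $\Gdisl$ with the Euclidean metric on $\{r\ge\e\}$ to obtain $|df-Q\Pdisl|^2_{\Gdisl,\euc}\,\VolGe \ge c\,r^{-1}|(df-Q\Pdisl)(\partial_\vp)|^2\,d\vp\,dr\,dz$, and integrate in $r$ and $z$. Your formula for $\Gdisl(\partial_\vp,\partial_\vp)$ is imprecise; it actually equals $r^2-\frac{\e}{\pi}r\sin\vp+\frac{\e^2}{4\pi^2}$. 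This does not matter, since the bound you need, $|\partial_\vp|_{\Gdisl}\le r+\frac{\e}{2\pi}\le 2r$, follows directly from the triangle inequality.
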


\begin{proof}
Set 
\[ 
A:=df-Q\Pdisl . 
\]
 
Since $r\geq \rho\geq \e$, the metric $\Gdisl$ is uniformly comparable to the Euclidean metric (\propref{prop:prelim}), and hence
\beq
\int_{M^h_{\rho,\rho+h}} |A|^2_{\Gdisl,\euc}\, d\operatorname{Vol}_{\Pdisl} \geq C\int_0^h\int_\rho^{\rho+h}\int_0^{2\pi} \frac{|A(\partial_\vp)|^2}{r}\, d\vp\,dr\,dz,
\label{eq:lb_lemma_1}
\eeq
for some universal constant $C$.

For a.e.\ pair $(r,z)$, the map $\vp\mapsto f(r,\vp,z)$ belongs to $W^{1,2}(\bbS^1;\mathbb R^3)$, and hence 
\[ 
\int_0^{2\pi} df(\partial_\vp)\,d\vp=  0. 
\] 
On the other hand, 
\[ 
\int_0^{2\pi} \Pdisl(\partial_\vp)\,d\vp = \int_0^{2\pi} d\iota(\partial_\vp)\,d\vp + \frac{\e}{2\pi}e_1 \int_0^{2\pi}
d\vp = \e e_1.
\]
Consequently, 
\[ 
\int_0^{2\pi} A(\partial_\vp)\,d\vp = -\e Q\, e_1 . 
\] 
By Jensen's inequality, 
\[ 
\int_0^{2\pi}|A(\partial_\vp)|^2\,d\vp \geq \frac{1}{2\pi} \left| \int_0^{2\pi}A(\partial_\vp)\,d\vp \right|^2 = \frac{\e^2}{2\pi}. 
\] 
Substituting this estimate into \eqref{eq:lb_lemma_1} gives
\[ 
\int_{M^h_{\rho,\rho+h}} |df-Q\Pdisl|^2_{\Gdisl,e}\, d\operatorname{Vol}_{\Pdisl} \geq 
C\e^2 \int_0^h\int_\rho^{\rho+h}\frac{1}{r}\,dr\,dz  = 
C \e^2 h \log\left(1+\frac{h}{\rho}\right), 
\]
which proves the claim. 
\end{proof}

We now complete  the proof of the lower bound in \thmref{thm:dislocation}:

\begin{proof1}{of lower bound in \thmref{thm:dislocation}:}
Let $f\in W^{1,2}(\Mdisl;\mathbb R^3)$. Since
$\e,h<1/2$, the total volume of $\Mdisl$ with respect to
$\Gdisl$ is comparable to $h$. By the definition of the normalized
energy,
\[
\Edisl(f)
\geq
\frac{C}{h}
\int_{\Mdisl}
\dist^2(df\circ (\Pdisl)^{-1},SO(3))\,d\Vol_{\Pdisl}.
\]
We now partition $\Mdisl$ into annuli
\[
\M_{r_i,r_i+h}^h, \qquad r_i = \e + ih , \qquad i = 0,\ldots ,N,
\]
where $N = \lfloor \frac{1-\e}{h}\rfloor-1$,
and bound from below the right-hand side in each subdomain separately.

\textbf{Step 1: The inner-most annulus.}
We first estimate the contribution of the inner-most annulus
$\M^h_{\e,\e+h}$. By \propref{prop:FJMdisloc}
with $\rho=\e$, along with \lemref{lem:lowbound}, there exists a $Q\in SO(3)$ such that
\[
\begin{aligned}
\int_{M^h_{\e,\e+h}}
\dist^2(df\circ (\Pdisl)^{-1},SO(3))\,d\Vol_{\Pdisl}
&\geq
\frac{C}{1+\e^2/h^2}
\int_{\M^h_{\e,\e+h}}
|df-Q\Pdisl|^2_{\Gdisl,e}\,d\Vol_{\Pdisl} \\
&\geq
\frac{C\,\e^2 h}{1+\e^2/h^2}
\log\left(1+\frac{h}{\e}\right).
\end{aligned}
\]
Normalizing by the total volume of the body, the contribution of the inner-most annulus to the energy $\Edisl(f)$ is bounded from below by
\begin{equation}
\frac{C\e^2}{1+\e^2/h^2}
\log\left(1+\frac{h}{\e}\right).
\label{eq:inner-annulus-dislocation-lower-bound}
\end{equation}

\textbf{Step 2: The outer annuli.}
We next estimate the contribution of the remaining annuli. 
Assume that either $\e<1/10$ or $h<1/10$, since if both $\e,h\in [1/10,1/2)$ then the inner annulus already gives the wanted lower bound. 
Applying \propref{prop:FJMdisloc} and \lemref{lem:lowbound} on each annulus
$\M^h_{r_i,r_i+h}$, and using that $r_i>h$ for $i\ge 1$, we obtain
\[
\int_{\M^h_{r_i,r_i+h}}
\dist^2(df\circ (\Pdisl)^{-1},SO(3))\,d\Vol_{\Pdisl}
\geq
\frac{C\,\e^2 h}{1+r_i^2/h^2} \log\left(1+\frac{h}{r_i}\right)
\ge C\,\e^2 h^4\,\frac{1}{r_i^3},
\]
where in the last passage we used the fact that $\log(1+x) \ge x/2$ for $x\in(0,1)$.

Summing over $i$ and normalizing, we get that the total energetic contributions of the outer annuli can be bounded from below by
\beq
\label{eq:outer-annuli-dislocation-lower-bound}
\frac{C}{h}
\sum_{i=1}^{N}
\e^2 h^4\,\frac{1}{r_i^3}
=
C\,\e^2 h^3
\sum_{i=1}^N \frac{1}{r_i^3}
\ge c\e^2h^2 \int_{\e+h}^{1}\frac{dr}{r^3}
\ge 
C\,\e^2 h^2 \left(\frac{1}{(\e+h)^2}-1\right)
\geq 
C\min\{\e^2,h^2\},
\eeq
where in the last inequality we used the fact that $\e+h<3/5$, so that $1/(\e+h)^2-1 \ge 2/5(\e+h)^2$.

\textbf{Step 3: Conclusion.}
Combining the estimates
\eqref{eq:inner-annulus-dislocation-lower-bound}--\eqref{eq:outer-annuli-dislocation-lower-bound}, we obtain
\[
\Edisl(f)
\geq
C\left[
\frac{\e^2}{1+\e^2/h^2}
\log\left(1+\frac{h}{\e}\right)
+
\min\{\e^2,h^2\}
\right].
\]

It remains only to compare the right-hand side with the desired expression.
If $\e\leq h$, then
\[
\frac{\e^2}{1+\e^2/h^2}
\log\left(1+\frac{h}{\e}\right)
\geq
C\e^2
\log\left(2+\frac{h}{\e}\right) \ge C\e^2 = C \min\{\e^2,h^2\}.
\]
If $\e\geq h$, then
\[
\min\{\e^2,h^2\}=h^2
\geq
C h^2 \log\left(2+\frac{h}{\e}\right).
\]
Hence, in all cases,
\[
\Edisl(f)
\geq
C\min\{\e^2,h^2\}
\log\left(2+\frac{h}{\e}\right).
\]
Taking the infimum over $f$ proves the lower bound.
\end{proof1}

\subsection{Disclinations}
\label{subsect:disclower}

In this section we prove the part in \thmref{thm:disclination} concerning the lower bound for disclinations.
Again, we start with a rigidity estimate, which is the analogue of \propref{prop:FJMdisloc}. 

\begin{proposition}
\label{prop:FJMdisc}
Let $h<1$, $\alpha \in (- \pi, \pi)$ and $\rho\in [h,1-h]$.
Then there exists an absolute constant $C>0$ such that for every $f\in W^{1,2}(\M_{\rho,\rho+h}^h;\R^3)$ there exists a rotation $Q \in \SO(3)$, such that
\[
\int_{\M_{\rho,\rho+h}^h}   |df - Q \Pdisc|_{\Gdisc,\euc}^2 \, \VolGa 
\le 
C \frac{\rho^2}{h^2}\, \int_{\M_{\rho,\rho+h}^h} \dist^2(df\circ \Pdisc^{-1},\SO(3))\, \VolGa.  
\]
\end{proposition}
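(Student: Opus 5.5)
We follow the proof of \propref{prop:FJMdisloc}: cover the annulus by simply-connected pieces, develop each piece isometrically into $\R^3$, apply \lemref{cor:FJMHA} there, and glue the rotations. We cover $\M_{\rho,\rho+h}^h\subset\Mdisc$ by three overlapping coordinate sectors $\Omega_1,\Omega_2,\Omega_3$ in the $\vp$ variable. Each has angular width less than $\pi$, and $\Omega_1$ is chosen to avoid the cut $\{\vp=0\}$. On each $\Omega_i$ the prestrain $\Pdisc$ agrees a.e.\ with a smooth curl-free map, up to left-multiplication by a constant rotation across the cut. This is because crossing $\vp=0$ changes $\Pdisc$ by the rotation about $e_3$ of angle $2\pi\ca$, which follows from the explicit formula \eqref{eq:tildeDa}. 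Hence, after fixing the cut, there are developing maps $\Psi_i$ with $d\Psi_i=R_i\Pdisc$ for constant $R_i\in\SO(3)$. Explicitly, $\Psi_i(r,\vp,z)=(r\cos(\ca\vp+\theta_i),r\sin(\ca\vp+\theta_i),z)$.

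\textbf{Step 1: the developed pieces.} Since $|\alpha|<\pi$, we have $\ca\in(1/2,3/2)$. Choosing the angular widths of the $\Omega_i$ to be, say, $2\pi/3+$ a small overlap, the image $\Psi_i(\Omega_i)$ is a Euclidean cylindrical annular sector of radii $\rho,\rho+h$, height $h$ and opening angle at most about $\pi$. Each such $\Psi_i$ is injective, and it is an isometry onto its image, because $\Gdisc$ is pulled back from the Euclidean metric. So no bi-Lipschitz distortion constant enters, which is simpler than in the dislocation case. The image is uniformly bi-Lipschitz equivalent to the half-annulus $\HA^h_{\rho,\rho+h}$ when the opening angle lies in a compact subset of $(0,2\pi)$; for instance, rescale the angle and note $\rho\ge h$. \lemref{cor:FJMHA} then gives a rigidity constant $C(1+\rho^2/h^2)\le 2C\rho^2/h^2$, using $\rho\ge h$. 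This explains why the statement has $\rho^2/h^2$ instead of $1+\rho^2/h^2$. Pulling back by $\Psi_i$, we get $Q_i$ with
\[
\int_{\Omega_i}|df-Q_iR_i\Pdisc|^2\,\VolGa\le C\frac{\rho^2}{h^2}\int_{\Omega_i}\dist^2(df\circ\Pdisc^{-1},\SO(3))\,\VolGa .
\]

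\textbf{Step 2: gluing the rotations.} We set $\tilde Q_i=Q_iR_i$. On each overlap $\Omega_1\cap\Omega_j$ with $j=2,3$, which lies away from the cut, the two estimates give
\[
|\tilde Q_1-\tilde Q_j|^2\,\Vol(\Omega_1\cap\Omega_j)\le 2\int_{\Omega_1\cap\Omega_j}\bigl(|df-\tilde Q_1\Pdisc|^2+|df-\tilde Q_j\Pdisc|^2\bigr),
\]
where we use that $|\Pdisc^{-1}|$ is bounded. The overlaps can be chosen with volume comparable to $\Vol(\Omega_i)$, a fixed fraction of the angular range. Therefore the replacement of $\tilde Q_j$ by $\tilde Q_1$ on $\Omega_j$ costs at most a constant multiple of the same right-hand side.

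On $\Omega_2\cap\Omega_3$, which contains the cut, we do not need to compare the two rotations: everything is referenced to $\tilde Q_1$. The only subtlety is that $\Pdisc$ jumps across the cut. We therefore write all estimates in terms of the a.e.-defined $\Pdisc$, noting that on each $\Omega_i$ the relation $d\Psi_i=R_i\Pdisc$ holds a.e. with a single constant $R_i$. Achieving this requires choosing, on the side of $\Omega_i$ containing the cut, the appropriate branch, which is absorbed into the choice of $\theta_i$ piecewise. I expect this bookkeeping to be the main obstacle. If the piecewise $R_i$ differ by the holonomy rotation, that piece of $\Omega_i$ must be handled by splitting $\Omega_i$ at the cut and treating the two halves as separate subdomains. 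Both halves still have comparable geometry and overlap, so the same gluing argument applies. Taking $Q=\tilde Q_1$ and summing over $i$ gives the claim.
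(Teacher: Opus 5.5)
Your proposal is correct and follows essentially the paper's route. You cover the annulus by three angular sectors and develop each one isometrically onto a Euclidean annular sector whose opening angle stays in a compact subset of $(0,2\pi)$, since $\ca\in(1/2,3/2)$. You then apply Lemma~\ref{cor:FJMHA}, using $1+\rho^2/h^2\le 2\rho^2/h^2$ since $\rho\ge h$, and glue the rotations through the overlaps.

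The one point to sharpen is that on a sector crossing $\{\vp=0\}$ no single constant $R_i$ exists when $\alpha\neq0$, because the two sides of the cut differ by the rotation about $e_3$ of angle $2\pi\ca=2\pi-\alpha$. So your splitting fallback is the step actually required, not an optional one. It suffices: the halves of $\Omega_2,\Omega_3$ adjacent to $\Omega_1$ already cover the annulus up to the cut, and no rotations are ever compared across it. The paper's one-line proof glosses over this detail.
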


\begin{proof}
The proof is as in \propref{prop:FJMdisloc}, using the fact that $\Pdisc$ is a parallel section of $\SO(\Gdisc,\euc)$ on $\M_{\rho,\rho+h}^h\setminus\{\vp=0\}$. 
Indeed, covering $\M_{\rho,\rho+h}^h$ with $\Omega_1,\Omega_2,\Omega_3$ as in \propref{prop:FJMdisloc}, and noting that $\Pdisc$ is smooth and closed on each $\Omega_i$, 
there are immersions $\Psi_i:\Omega_i\to \R^3$ with $d\Psi_i = \Pdisc$,  which are thus isometric immersions of $\Gdisc$. Again, they are bilipschitz embeddings: The map $\Psi_i$ is essentially the map $\vp \mapsto \ca\vp$, where $\ca = 1-\alpha/2\pi$. 
Since $\alpha\in (-\pi,\pi)$, $\ca \in (1/2,3/2)$, and thus $\Psi_i$ is injective on any half-annulus, and bilipschitz with constant independent of $\alpha$  and $h$.
From here one proceeds exactly as in \propref{prop:FJMdisloc}.
\end{proof}

We proceed to derive a lower bound for the elastic energy of a thin ring with a disclination defect. This is the analogue of \lemref{lem:lowbound}. 

\begin{lemma}
\label{lem:lowbddisc}
Under the assumptions of \propref{prop:FJMdisc}, there exists a
universal constant $C>0$ such that, for every
$f\in W^{1,2}(\M_{\rho,\rho+h}^h;\R^3)$ and every $Q\in \SO(3)$,
\[
\int_{\M_{\rho,\rho+h}^h} |df-Q\Pdisc|_{\Gdisc,\euc}^2\,\VolGa
\ge
Ch^2 \rho (1-\cos\alpha).
\]
\end{lemma}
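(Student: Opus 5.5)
Following the proof of \lemref{lem:lowbound}, set $A:=df-Q\Pdisc$ and test $A$ on $\partial_\vp$ along circles. The new feature is that $\Pdisc(\partial_\vp)$ is discontinuous at $\vp=0$, so its integral around a circle is computed on $(0,2\pi)$ and does not vanish. From \eqref{eq:Gdisc}, $|\partial_\vp|_{\Gdisc}=\ca r$ with $\ca\in(1/2,3/2)$, and $\VolGa=\ca r\,dr\,d\vp\,dz$. Hence
\[
\int_{\M_{\rho,\rho+h}^h}|A|^2_{\Gdisc,\euc}\,\VolGa\ \ge\ \int_0^h\int_\rho^{\rho+h}\int_0^{2\pi}\frac{|A(\partial_\vp)|^2}{\ca r}\,d\vp\,dr\,dz .
\]

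Next, for a.e.\ $(r,z)$ the map $\vp\mapsto f(r,\vp,z)$ lies in $W^{1,2}(\bbS^1)$, so $\int_0^{2\pi}df(\partial_\vp)\,d\vp=0$. By \eqref{eq:tildeDa},
\[
\Pdisc(\partial_\vp)=\ca r\,(-\sin(\ca\vp),\cos(\ca\vp),0),
\]
which is the derivative of $r(\cos(\ca\vp),\sin(\ca\vp),0)$. Integrating over $(0,2\pi)$ gives
\[
\int_0^{2\pi}\Pdisc(\partial_\vp)\,d\vp=r\bigl(\cos(2\pi\ca)-1,\ \sin(2\pi\ca),\ 0\bigr).
\]
Since $2\pi\ca=2\pi-\alpha$, this equals $r(\cos\alpha-1,-\sin\alpha,0)$, whose squared norm is $r^2\bigl((1-\cos\alpha)^2+\sin^2\alpha\bigr)=2r^2(1-\cos\alpha)$. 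Because $Q$ is an isometry,
\[
\Bigl|\int_0^{2\pi}A(\partial_\vp)\,d\vp\Bigr|^2=2r^2(1-\cos\alpha).
\]

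By Jensen's inequality, $\int_0^{2\pi}|A(\partial_\vp)|^2\,d\vp\ge r^2(1-\cos\alpha)/\pi$. Substituting this into the first display gives
\[
\int|A|^2\,\VolGa\ \ge\ C(1-\cos\alpha)\int_0^h\int_\rho^{\rho+h} r\,dr\,dz\ \ge\ C(1-\cos\alpha)\,h\cdot h\rho ,
\]
using $r\ge\rho$ on an $r$-interval of length $h$. This is the claimed $Ch^2\rho(1-\cos\alpha)$.

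I expect no serious obstacle. The one point needing care is the justification of $\int_0^{2\pi}df(\partial_\vp)\,d\vp=0$ for a.e.\ $(r,z)$: $f$ is a genuine $W^{1,2}$ function on the annulus, continuous across $\vp=0$ even though $\Pdisc$ is not, which is exactly the argument used in \lemref{lem:lowbound}. The remaining steps are bookkeeping of the factors $\ca$ in the metric and the volume form, and these are uniformly bounded because $\alpha\in(-\pi,\pi)$.
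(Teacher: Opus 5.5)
Your proposal is correct and follows essentially the same route as the paper's proof. Both test $A=df-Q\Pdisc$ on $\partial_\vp$, use the periodicity of $f$ on each circle, apply Jensen, compute the circulation $\bigl|\int_0^{2\pi}\Pdisc(\partial_\vp)\,d\vp\bigr|^2=2r^2(1-\cos\alpha)$, and integrate $r\ge\rho$ over a region of size $h\times h$, with the factors of $\ca$ absorbed using $\ca\in(1/2,3/2)$.
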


\begin{proof}
Set
\[
A:=df-Q\Pdisc .
\]
Since $|\pl_\vp|_{\Gdisc} = \ca r$, we have
\[
|A|_{\Gdisc,\euc}^2
\ge
\frac{1}{\ca^2 r^2}\,|A(\partial_\vp)|^2 ,
\]
with $\ca\in (1/2,3/2)$.
Using that $\VolGa = \ca r \,dr\,d\vp\,dz$,
\[
\int_{\M_{\rho,\rho+h}^h} |A|_{\Gdisc,\euc}^2\,\VolGa
\ge
c\int_0^h\int_\rho^{\rho+h}
\frac{1}{r}\int_0^{2\pi}|A(\partial_\vp)|^2\,d\vp\,dr\,dz .
\]
Arguing as in \lemref{lem:lowbound}, we obtain that for a.e.\ $(r,z)\in(\rho,\rho+h)\times(0,h)$, 
\[
\int_0^{2\pi}|A(\partial_\vp)|^2\,d\vp
\ge
\frac{1}{2\pi}
\left| \int_0^{2\pi}\Pdisc(\partial_\vp)\,d\vp \right|^2 .
\]

It remains to compute the circulation of $\Pdisc$ around the circle
$\{r=\text{const.},z=\text{const.}\}$. By definition,
\[
\Pdisc(\partial_\vp)
=
-\ca r\sin(\ca\vp)\, e_1+\ca r\cos(\ca\vp)\, e_2 .
\]
Integrating over $\vp$ and substituting the definition of $\ca$ gives
\[
\begin{split}
\int_0^{2\pi}\Pdisc(\partial_\vp)\,d\vp
&=
r\brk{\cos(2\pi\ca)-1}\, e_1
+
r\sin(2\pi\ca)\, e_2  \\
&=
r\brk{(\cos\alpha-1)\, e_1-\sin\alpha\,e_2}.
\end{split}
\]
Consequently,
\[
\left|
\int_0^{2\pi}\Pdisc(\partial_\vp)\,d\vp
\right|^2
=
r^2\brk{(\cos\alpha-1)^2+\sin^2\alpha}
=
2r^2(1-\cos\alpha).
\]
Substituting this estimate into the previous inequalities, and absorbing the numerical constants into $C$, we conclude the proof.
\end{proof}

We now complete the proof of the lower bound in \thmref{thm:disclination}:

\begin{proof1}{of lower bound in \thmref{thm:disclination}:}
Let $f\in W^{1,2}(\Mdisc ;\R^3)$ be arbitrary. 
Using the fact that the total volume of $\Mdisc$ with respect to
$\Gdisc$  is comparable to $h$,
\[
\Edisc(f)
\ge
\frac{C}{h}
\int_{\Mdisc}
\dist^2(df\circ (\Pdisc)^{-1},\SO(3))\,\VolGa .
\]

We now restrict the integral to disjoint annuli of width $h$. Let
\[
m_h:=\left\lfloor \frac1h\right\rfloor-1,
\Textand
r_i:=ih,
\qquad
i=1,\ldots,m_h .
\]
Since $h<1/2$, we have $m_h\ge 1$, and so
\[
\Edisc(f)
\ge
\frac{C}{h}
\sum_{i=1}^{m_h}
\int_{\M_{r_i,r_i+h}^h}
\dist^2(df\circ (\Pdisc)^{-1},\SO(3))\,\VolGa .
\]

Fix $i\in\{1,\ldots,m_h\}$. By \propref{prop:FJMdisc}, applied
with $\rho=r_i$, there exists a $Q_i\in\SO(3)$ such that
\[
\int_{\M_{r_i,r_i+h}^h}
\dist^2(df\circ (\Pdisc)^{-1},\SO(3))\,\VolGa
\ge
C\,\frac{h^2}{r_i^2}
\int_{\M_{r_i,r_i+h}^h}
|df-Q_i\Pdisc|_{\Gdisc,\euc}^2\,\VolGa .
\]
Using \lemref{lem:lowbddisc} with $Q=Q_i$, we obtain
\[
\int_{\M_{r_i,r_i+h}^h}
\dist^2(df\circ \Pdisc^{-1},\SO(3))\,\VolGa
\ge C(1-\cos\alpha)\frac{h^4}{r_i}.
\]
Summing over the annuli gives
\[
\Edisc(f)
\ge
\frac{C}{h}
\sum_{i=1}^{m_h}
(1-\cos\alpha)\frac{h^4}{r_i} \\
=
C(1-\cos\alpha)h^3
\sum_{i=1}^{m_h}\frac1{r_i}.
\]
Bounding the sum from below by an integral,
\[
\Edisc(f)
\ge
C(1-\cos\alpha)h^2\log\frac1h .
\]
Finally, for $\alpha\in(-\pi,\pi)$,
\[
1-\cos\alpha
=
2\sin^2\left(\frac{\alpha}{2}\right)
\ge
C\alpha^2 .
\]
Hence
\[
\Edisc(f)
\ge
C\alpha^2 h^2\log\frac1h .
\]
Taking the infimum over $f\in W^{1,2}(\Mdisc;\R^3)$ yields the lower
bound.
\end{proof1}

\section{Upper bounds}
\label{sect:upbounds}

In this section, we prove the upper bounds in \thmref{thm:disclination} and \thmref{thm:dislocation}.
Each bound hinges on explicit constructions of configurations. 
As described in the introduction, these configurations are constructed for the 2D mid-surface.  
We estimate their stretching and bending energy, which together bound the 3D energy as detailed in Appendix~\ref{sec:2D3D}.
More precisely,  denote the mid-surface by $\S$, the 2D prestrain by $\K$, and the surface metric by $g$.
We either construct immersions 
\[
\sigma:\S\to\R^3,
\]
estimate their corresponding plate energy $\EPlate_{(S,\K)}(\sigma)$ as defined in \eqref{eq:EPlate}, and hinge upon \propref{prop:kirchhoff_love} to conclude that their Kirchhoff-Love extensions  $f:\M\to\R^3$ satisfy
\[
E_{(\M,\P)}(f) \le 2\, \EPlate_{(\S,\K)}(\sigma).
\]
Alternatively, we construct maps $\sigma\in W^{2,2}(\S;\R^3) \cap W^{1,\infty}(\S;\R^3)$ 
(which may fail to be immersions), estimate their modified plate energy  
$\tEP_{(\S,\K)}$ as defined in \eqref{eq:mEPlate}, and hinge upon \propref{prop:kirchhoff_love2} to conclude the existence of an extension $f\in W^{1,2}(\M;\R^3)$ satisfying
\[
E_{(\M,\P)}(f) \le \max\brk{68, 4 \|d\sigma\|_\infty^2} \, \tEP_{(\S,\K)}(\sigma) + Ch^2 \|d\sigma\|_4^4.
\]

For consistency of  notations, we denote by $\Sdisc$ and $\Sdisl$ the respective mid-surfaces of $\Mdisc$ and $\Mdisl$. 
The reduced  two-dimensional prestrains are denoted by $\Kdisc$ and $\Kdisl$, respectively, and the induced two-dimensional metrics are denoted by $\gdisc$ and $\gdisl$.

\subsection{Disclinations}
\label{subsect:discupper}

\subsubsection{The flat configuration}

We begin by deriving a simple upper bound, which can be obtained directly in the three-dimensional setting. 

\begin{proposition}
\label{prop:flat_disloc}
Let $h\in (0,1/2)$ and $\alpha\in (-\pi,\pi)$, and let $\Edisc$ be the elastic energy associated with a disclination.
Then the flat configuration $f$ of $\Mdisc$ yields an energy bound
\[
\Edisc(f) \leq C \alpha^2.
\]
for some absolute constant $C>0$. 
\end{proposition}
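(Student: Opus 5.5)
The plan is to pick an explicit flat configuration and compute its pointwise distortion directly in three dimensions; no reduction to the mid-surface is needed. In the coordinates $(r,\vp,z)$ of $\Mdisc$, I take
\[
f(r,\vp,z) = (r\cos\vp,\ r\sin\vp,\ z),
\]
the coordinate inclusion. This map is smooth and single-valued on $\Mdisc$, so $f\in W^{1,2}$. Its differential is
\[
df = e_r\otimes dr + r\,e_\vp\otimes d\vp + e_3\otimes dz, \qquad e_r=(\cos\vp,\sin\vp,0),\quad e_\vp=(-\sin\vp,\cos\vp,0).
\]

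Next I compute $df\circ\Pdisc^{-1}$ in the $\Gdisc$-orthonormal frame $\{\pl_r,\ (\ca r)^{-1}\pl_\vp,\ \pl_z\}$. Under $\Pdisc$ this frame is sent to the orthonormal frame $\{(\cos\ca\vp,\sin\ca\vp,0),\ (-\sin\ca\vp,\cos\ca\vp,0),\ e_3\}$. Under $df$ it is sent to $\{e_r,\ \ca^{-1}e_\vp,\ e_3\}$. Hence
\[
df\circ\Pdisc^{-1} = R_{\vp}\,\mathrm{diag}(1,\ca^{-1},1)\,R_{\ca\vp}^{-1},
\]
where $R_\theta$ denotes the rotation by angle $\theta$ about the $e_3$-axis. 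This factorization holds on $\Mdisc\setminus\{\vp=0\}$, which has full measure.

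By the left and right $\SO(3)$-invariance of the distance to $\SO(3)$,
\[
\dist^2\big(df\circ\Pdisc^{-1},\SO(3)\big)\le |\mathrm{diag}(1,\ca^{-1},1)-I|^2 = (\ca^{-1}-1)^2 = \frac{(\alpha/2\pi)^2}{\ca^2}.
\]
Since $\alpha\in(-\pi,\pi)$ gives $\ca\in(1/2,3/2)$, this is at most $\alpha^2/\pi^2$. The bound is uniform in the point, so averaging over $\Mdisc$ yields $\Edisc(f)\le C\alpha^2$ with an absolute constant, independent of $h$.

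The only step requiring care is verifying that the $\SO(3)$ factors really are rotations, so that the discontinuity of $\Pdisc$ at $\vp=0$ is irrelevant. It is irrelevant because the estimate is pointwise on a set of full measure.
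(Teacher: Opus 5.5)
Your proof is correct and takes essentially the same route as the paper. You use the same flat configuration (the coordinate inclusion, which is the identity in Cartesian coordinates) and reduce the pointwise distortion, via $\SO(3)$-invariance, to the angular discrepancy $|\ca^{-1}-1|$; the paper replaces $\Pdisc$ by $\Gdisc^{1/2}$ using isotropy, whereas you write the factorization $R_\vp\,\mathrm{diag}(1,\ca^{-1},1)\,R_{\ca\vp}^{-1}$ out explicitly, which also yields the explicit constant $1/\pi^2$.
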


\begin{proof}
Recall that $\Mdisc=\{(x',x_3) ~:~ |x'| \in (0, 1),\, x_3\in (0,h)\}$, and that the cone metric $\Gdisc$ is given by \eqref{eq:Gdisc}.
Consider the identity map $f = \Mdisc \to \R^3$ in these coordinates, which yields 
\[
df = I_{3\times 3}
\qquad\text{whereas}\qquad
\Gdisc = I_{3\times 3} + (\ca^2-1) r^2 \, d\vp\otimes d\vp.
\]
By the isotropy of the energy density,
\[
\dist(df\circ \Pdisc^{-1},\SO(3)) = \dist(df\circ \Gdisc^{-1/2},\SO(3)) \le |df\circ\Gdisc^{-1/2} - I_{3\times 3}|_{\euc} \le C|1-c_\alpha^2| \le C\alpha. 
\]
The bound is obtained by squaring and integrating.
\end{proof}

\subsubsection{The zero stretching configuration}
\label{sec:iso_disclination}

We now construct a configuration $f:\Mdisc\to \R^3$ satisfying 
\[
\Edisc(f) \leq C|\alpha| h^2 \log(1/h),
\]
thus concluding the proof of the upper bound in \thmref{thm:disclination}.
Following the path outlined at the beginning of this section, we construct configurations $\sigma_h :\Sdisc\to \R^3$  for the mid-surface $\Sdisc = \{x' ~:~ |x'|\in (0,1)\}$ such that their  plate energy is of that order.
We also denote $\Sdiscrho =  \{x' ~:~ |x'|\in (\rho,1)\}$.
The restriction of $\Gdisc$ to $\Sdisc$ is given by
\[
\gdisc = dr \otimes dr + c_\alpha^2 r^2 d\vp \otimes d\vp.
\]

\begin{proposition}
\label{prop:first}
Let $\alpha\in(-\pi,\pi)$, and $\ca = 1 - \alpha/2 \pi$. 
Let $\Psi:\bbS^1\to\R^3$ be a parametrized loop satisfying
\[
\ip{\Psi(t),\Psi(t)} = 1
\Textand
\ip{\Psi'(t),\Psi'(t)} = \ca^2.
\]
That is, $\Psi$ has constant speed $\ca$,  its image lies on the unit sphere and has total length $2\pi \ca$. Then the map (in polar coordinates)
\[
\sigma(r,\vp) = r\,\Psi(\vp)
\]
is an isometric immersion $(\Sdisc,\gdisc)\to (\R^3,\euc)$.
\end{proposition}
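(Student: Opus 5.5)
The plan is a direct computation of the pullback metric $\sigma^*\euc$ in the polar coordinates $(r,\vp)$ on $\Sdisc$, followed by a comparison with $\gdisc = dr\otimes dr + \ca^2 r^2\, d\vp\otimes d\vp$. Since $\sigma(r,\vp) = r\Psi(\vp)$, its partial derivatives are
\[
\pl_r\sigma = \Psi(\vp), \qquad \pl_\vp \sigma = r\,\Psi'(\vp).
\]
Hence the components of the induced metric are $\ip{\pl_r\sigma,\pl_r\sigma}$, $\ip{\pl_r\sigma,\pl_\vp\sigma}$ and $\ip{\pl_\vp\sigma,\pl_\vp\sigma}$. I will evaluate each of these using the two hypotheses on $\Psi$.

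First, $\ip{\Psi,\Psi}=1$ gives $|\pl_r\sigma|^2 = 1$. Second, differentiating $\ip{\Psi(t),\Psi(t)}=1$ in $t$ gives $\ip{\Psi,\Psi'}=0$, so $\ip{\pl_r\sigma,\pl_\vp\sigma} = r\ip{\Psi,\Psi'}=0$. Third, the constant-speed assumption gives $|\pl_\vp\sigma|^2 = r^2\ca^2$. Therefore
\[
\sigma^*\euc = dr\otimes dr + \ca^2 r^2\, d\vp\otimes d\vp = \gdisc,
\]
which is the isometry property.

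For the immersion property, the differential must have rank two at every point with $r\in(0,1)$. This follows from the computation above, since $\pl_r\sigma$ and $\pl_\vp\sigma$ are orthogonal with norms $1$ and $\ca r>0$ respectively. Here $\ca>0$ because $\alpha<2\pi$; indeed $\ca\in(1/2,3/2)$ under the standing assumption $\alpha\in(-\pi,\pi)$. Smoothness of $\sigma$ on $\Sdisc$ (which excludes the origin) is inherited from that of $\Psi$. Well-definedness on the circle is automatic, because $\Psi$ is a loop parametrized by $\bbS^1$, so $\sigma$ is $2\pi$-periodic in $\vp$.

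No step here is a genuine obstacle. The only subtlety is conceptual: although the prestrain $\Pdisc$ is discontinuous at $\vp=0$, the metric $\gdisc$ is smooth and $\vp$-independent, so the isometry condition is checked pointwise in coordinates and involves no gluing issue. The length $2\pi\ca$ of the curve is then just a consistency remark and is not needed in the proof. (The existence of such a $\Psi$, e.g.\ a latitude circle when $\ca<1$ or a wavy curve on the sphere when $\ca>1$, is a separate matter not required by the statement.)
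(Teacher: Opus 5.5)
Your proposal is correct and follows essentially the same route as the paper: compute $\pl_r\sigma=\Psi$ and $\pl_\vp\sigma=r\Psi'$, then use $|\Psi|=1$, $\ip{\Psi,\Psi'}=0$ and $|\Psi'|=\ca$ to match $\sigma^*\euc$ with $\gdisc$. Your extra check that the differential has rank two, using $\ca r>0$, is something the paper leaves implicit.
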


\begin{proof}
This is immediate as
\beq
\partial_r \sigma(r,\vp)  = \Psi(\vp)
\Textand
\partial_\vp \sigma(r,\vp)  = r\,\Psi'(\vp),
\label{eq:deriv_sigma}
\eeq
hence
\[
\begin{aligned}
\ip{\partial_r\sigma,\partial_r\sigma} &= \ip{\Psi(\vp),\Psi(\vp)} = 1 \\
\ip{\partial_r\sigma,\partial_\vp \sigma} &=  r\, \ip{\Psi(\vp),\Psi'(\vp)} = 0 \\
\ip{\partial_\vp \sigma,\partial_\vp \sigma} &= r^2 \ip{\Psi'(\vp),\Psi'(\vp)} = \ca^2 r^2,
\end{aligned}
\]
where we used the fact that $\Psi(\vp)$ and $\Psi'(\vp)$ are orthogonal, since $\Psi(\vp)$ is a unit vector.
\end{proof}

\begin{proposition}
Let $\sigma$ be as in \propref{prop:first} and denote by $\frakn$ the unit normal of $\sigma$. 
Then,
\[
|d\frakn|_{\gdisc}^2 = \frac{1}{\ca^4 r^2} |\Psi(\vp)\times \Psi''(\vp)|^2.
\]
\end{proposition}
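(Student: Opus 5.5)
Since $\sigma=r\Psi(\vp)$ is a cone over the spherical curve $\Psi$, its normal should depend only on $\vp$. The plan is therefore to compute $\frakn$ explicitly, differentiate it, and measure the result in the metric $\gdisc$.

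\textbf{The unit normal.} By \eqref{eq:deriv_sigma}, the tangent vectors are $\partial_r\sigma=\Psi$ and $\partial_\vp\sigma=r\Psi'$. Their cross product is $r\,\Psi\times\Psi'$. Since $\Psi\perp\Psi'$ with $|\Psi|=1$ and $|\Psi'|=\ca$, we have $|\Psi\times\Psi'|=\ca$. Hence
\[
\frakn=\frac{1}{\ca}\,\Psi(\vp)\times\Psi'(\vp),
\]
which is independent of $r$. It follows that $d\frakn(\partial_r)=0$.

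\textbf{The angular derivative.} Differentiating in $\vp$ gives $\partial_\vp\frakn=\ca^{-1}\,\Psi\times\Psi''$, because $\Psi'\times\Psi'=0$.

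\textbf{The norm.} Next I compute the norm $|d\frakn|_{\gdisc}$, viewing $d\frakn$ as an $\R^3$-valued one-form. The frame $\{\partial_r,(\ca r)^{-1}\partial_\vp\}$ is $\gdisc$-orthonormal, so
\[
|d\frakn|^2_{\gdisc}=|d\frakn(\partial_r)|^2+\frac{1}{\ca^2r^2}\,|\partial_\vp\frakn|^2=\frac{1}{\ca^4r^2}\,|\Psi\times\Psi''|^2 .
\]
This is the claimed identity.

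\textbf{Possible obstacle.} The computation itself is routine. The only point needing care is the normalization $|\Psi\times\Psi'|=\ca$, which relies on the orthogonality $\Psi\perp\Psi'$ established in \propref{prop:first}. One should also confirm that the norm on $\R^3$-valued one-forms is the one induced by $\gdisc$ on the domain and $\euc$ on the target, consistent with the paper's notation $|\cdot|_{G_1,G_2}$.
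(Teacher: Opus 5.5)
Your proposal is correct and follows essentially the same route as the paper. Both compute $\frakn=\ca^{-1}\Psi\times\Psi'$ using $|\Psi\times\Psi'|=\ca$, differentiate to get $\ca^{-1}\Psi\times\Psi''\,d\vp$, and read off the $\gdisc$-norm via the unit coframe $\ca r\,d\vp$ (equivalently, your orthonormal frame $\{\partial_r,(\ca r)^{-1}\partial_\vp\}$).
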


\begin{proof}
Using once again the orthogonality of $\Psi(\vp)$ and $\Psi'(\vp)$,
\[
|\Psi(\vp)\times\Psi'(\vp)| =   |\Psi(\vp)|\,|\Psi'(\vp)| = \ca.
\]
Therefore, using \eqref{eq:deriv_sigma},  the unit normal is given by
\[
\frakn(r,\vp) = \frac{1}{\ca} \Psi(\vp)\times \Psi'(\vp).
\]
Hence, the claim follows from
\[
d\frakn = \frac{d\vp}{\ca} \Psi(\vp)\times \Psi''(\vp) = \frac{1}{\ca^2 r} \Psi(\vp)\times \Psi''(\vp) \,(\ca r\,d\vp),
\]
noting that $\ca r\,d\vp$ is of unit $\gdisc$-norm.
\end{proof}

\begin{corollary}
\label{cor:Ca}
Let $\sigma$ be as in \propref{prop:first} and let $\rho\in (0,1)$. 
Then the bending energy of $\sigma$ as defined by \eqref{eq:EB}, restricted to $\Sdiscrho$, is
\[
\EB_{(\Sdiscrho,\gdisc)}(\sigma) = \int_{\Sdiscrho} |d\frakn|^2_{\gdisc} \,\dVol{\gdisc} = C(\alpha) \, \log\frac{1}{\rho},
\]
where
\begin{equation}
\label{eq:defC}
C(\alpha) = \frac{1}{\ca^3} \int_0^{2\pi} |\Psi(\vp)\times \Psi''(\vp)|^2\,d\vp.
\end{equation}
\end{corollary}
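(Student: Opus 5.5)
The plan is to compute the integral directly in the polar coordinates $(r,\vp)$ of $\Sdiscrho$, using the pointwise formula for $|d\frakn|^2_{\gdisc}$ from the preceding proposition. That formula separates into a radial factor $1/r^2$ and an angular factor depending only on $\vp$, so the integral should factor into a radial part and an angular part.

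First, I will identify the area form of $\gdisc$. Since $\gdisc = dr\otimes dr + \ca^2 r^2\, d\vp\otimes d\vp$ is diagonal in $(r,\vp)$, its area form is
\[
\dVol{\gdisc} = \sqrt{\det \gdisc}\, dr\, d\vp = \ca r\, dr\, d\vp.
\]
This is also consistent with the 3D volume form $\VolGa = \ca r\,dr\,d\vp\,dz$ used earlier in the paper. I take \eqref{eq:EB} to define the bending energy as $\int |d\frakn|^2_{\gdisc}\,\dVol{\gdisc}$, as written in the statement; the first equality is then just the definition, restricted to $\Sdiscrho$.

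Next, I substitute the pointwise formula:
\[
\int_{\Sdiscrho} |d\frakn|^2_{\gdisc}\,\dVol{\gdisc}
= \int_\rho^1\int_0^{2\pi} \frac{1}{\ca^4 r^2}\,|\Psi(\vp)\times\Psi''(\vp)|^2\,\ca r\,d\vp\,dr .
\]
By Fubini (the integrand is nonnegative), this equals
\[
\frac{1}{\ca^3}\int_0^{2\pi}|\Psi\times\Psi''|^2\,d\vp\cdot\int_\rho^1 \frac{dr}{r}
= C(\alpha)\log\frac1\rho,
\]
which is exactly \eqref{eq:defC}.

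There is no real obstacle. The one point to check is that $C(\alpha)$ is finite, which needs $\Psi\in C^2$ (or at least $\Psi''\in L^2$). This is implicit in the assumption that $\Psi$ is a smooth loop, so that $\frakn$ and $d\frakn$ make sense.
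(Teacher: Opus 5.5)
Your proof is correct and is the argument the paper leaves implicit: integrate the pointwise identity $|d\frakn|^2_{\gdisc}=\ca^{-4}r^{-2}|\Psi\times\Psi''|^2$ from the preceding proposition against the area form $\ca r\,dr\,d\vp$, so that the integrand factors and the radial part contributes $\int_\rho^1 dr/r=\log(1/\rho)$. You were right to read the first equality as a definition; note that with the normalization actually written in \eqref{eq:EB} (the factor $\frac13$ and the averaged integral), the identity holds only up to a normalizing factor, which does not matter for the subsequent upper bounds.
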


With this we can immediately calculate the bending energy of standard truncated cones:

\begin{corollary}[Isometric immersion for $\alpha>0$]
\label{cor:positive_cone_ansatz}
Let $\alpha\in (0,\pi)$, and let 
\beq
\Psi(t) = \brk{\ca\,\cos t, \ca \,\sin t, (1 - \ca^2)^{1/2}}.
\label{eq:disclination_example1}
\eeq
be the standard isometric immersion of a cone.
Then the bending energy satisfies 
\[
\EB_{(\Sdiscrho,\gdisc)}(\sigma) =  \frac{2\pi}{\ca}\brk{\frac{\alpha}{\pi} - \frac{\alpha^2}{4\pi^2}}.\, \log\frac{1}{\rho} \le C\,\alpha \,\log\frac{1}{\rho}
\]
for some universal constant $C>0$.
\end{corollary}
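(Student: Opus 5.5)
The plan is to check that the given $\Psi$ satisfies the hypotheses of \propref{prop:first}, and then evaluate the constant $C(\alpha)$ of \corrref{cor:Ca} in closed form.

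\textbf{Step 1: admissibility.} Since $\alpha\in(0,\pi)$, we have $\ca = 1-\alpha/2\pi\in(1/2,1)$. Hence $s:=(1-\ca^2)^{1/2}$ is real and positive, and $\Psi$ is well defined. A direct computation gives
\[
\ip{\Psi,\Psi} = \ca^2\cos^2 t+\ca^2\sin^2 t + s^2 = \ca^2+(1-\ca^2) = 1 .
\]
Also $\Psi'(t) = (-\ca\sin t,\ca\cos t,0)$, so $\ip{\Psi',\Psi'}=\ca^2$. Thus \propref{prop:first} applies, and $\sigma = r\Psi(\vp)$ is an isometric immersion; it is the standard cone.

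\textbf{Step 2: the bending integrand.} Differentiating once more gives $\Psi''(t) = (-\ca\cos t,-\ca\sin t,0)$. Computing the cross product componentwise,
\[
\Psi\times\Psi'' = \brk{s\ca\sin t,\ -s\ca\cos t,\ 0}.
\]
The third component is $\ca\cos t\cdot(-\ca\sin t) - \ca\sin t\cdot(-\ca\cos t) = 0$. Hence $|\Psi\times\Psi''|^2 = s^2\ca^2$, which is constant in $t$.

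\textbf{Step 3: evaluating $C(\alpha)$.} Substituting into \eqref{eq:defC},
\[
C(\alpha) = \frac{2\pi s^2\ca^2}{\ca^3} = \frac{2\pi}{\ca}(1-\ca^2).
\]
Expanding $1-\ca^2 = 1-(1-\alpha/2\pi)^2 = \alpha/\pi - \alpha^2/4\pi^2$ gives exactly the stated identity, via \corrref{cor:Ca}.

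\textbf{Step 4: the upper bound.} We have $\alpha/\pi - \alpha^2/4\pi^2\le \alpha/\pi$, and $\ca>1/2$ gives $1/\ca<2$. Therefore $C(\alpha)\le 4\alpha$, so one can take $C=4$.

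No step is a real obstacle. The only point needing care is the sign restriction: $\alpha>0$ is what guarantees $\ca<1$, so that the third component $s$ of $\Psi$ is real. For $\alpha<0$ a different loop $\Psi$ is needed.
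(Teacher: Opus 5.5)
Your proposal is correct and matches the paper, which says only that the result follows immediately from \corrref{cor:Ca}. Your argument carries this out in the natural way: you check the hypotheses of \propref{prop:first}, observe that $|\Psi\times\Psi''|^2=\ca^2(1-\ca^2)$ is constant, substitute into \eqref{eq:defC} to get $C(\alpha)=\frac{2\pi}{\ca}(1-\ca^2)$, and then use $\ca>1/2$ to obtain $C(\alpha)\le 4\alpha$.
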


%

We proceed to construct an isometric immersion for the case of negative disclinations, i.e., $\alpha<0$.
In order to do so, we first observe that we can relax the assumption that $\Psi$ is of constant speed.
Let $t\mapsto s(t)$ be a reparametrization of $\bbS^1$ satisfying $s'(t) <3$. 
For $\Psi:\bbS^1\to\R^3$, let $\Phi:\bbS^1\to\R^3$ be defined by
\[
\Psi(t) = \Phi(s(t)).
\]
Then, a direct calculation shows that
\[
\Psi(t) \times \Psi''(t) = \Phi(s(t)) \times \Phi''(s(t))\, s'^2(t),
\]
from which follows that
\beq
\label{eq:Ca_Phi}
\begin{split}
C(\alpha) 
&=  \frac{1}{\ca^3} \int_0^{2\pi} | \Phi(s(t)) \times \Phi''(s(t))|^2 s'^2(t)\,dt \le \frac{3}{\ca^3} \int_0^{2\pi} | \Phi(t) \times \Phi''(t)|^2 \,dt,
\end{split}
\eeq
implying that $C(\alpha)$ can be evaluated up to a multiplicative constant by using the non-constant-speed curve $\Phi$,
which only needs to lie on the unit sphere and have total length $2\pi\ca$.


\begin{proposition}[Isometric immersion for $\alpha<0$]
\label{prop:negative_cone_ansatz}
Let $\alpha\in (-\pi,0)$, and let 
\[
\Phi(s) = (\cos\theta(s) \cos s ,\cos\theta(s) \sin s, \sin\theta(s)),
\]
where 
\[
\theta(s) = A_\alpha\,\sin 2s,
\]
such that $A_\alpha$ satisfies
\beq
\label{eq:A_alpha}
2\pi c_\alpha = \int_0^{2\pi} \brk{\cos^2(A_\alpha\,\sin 2s) + 4 A_\alpha^2 \cos^2 2s}^{1/2}\,ds.
\eeq
Then, the constant speed reparametrization $\Psi$ of $\Phi$ defines an isometric immersion $\sigma$ of $(\Sdisc,\gdisc)$, whose bending energy, restricted to $\Sdiscrho$, satisfies
\[
\EB_{(\Sdiscrho,\gdisc)}(\sigma)  \le C\,|\alpha| \,\log\frac{1}{\rho}.
\]
\end{proposition}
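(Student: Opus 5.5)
Three things need to be checked. First, $A_\alpha$ exists and is of size $|\alpha|^{1/2}$. Second, the reparametrization yields an isometric immersion. Third, $C(\alpha)\le C|\alpha|$, via Corollary~\ref{cor:Ca} and \eqref{eq:Ca_Phi}.

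\textbf{Existence and size of $A_\alpha$.} Let $L(A)=\int_0^{2\pi}(\cos^2(A\sin 2s)+4A^2\cos^2 2s)^{1/2}\,ds$ denote the length of $\Phi$. This is a continuous function of $A$ with $L(0)=2\pi$, and $L(A)\to\infty$ as $A\to\infty$, since the second term dominates. Since $\alpha<0$ we have $2\pi\ca\in(2\pi,3\pi)$, so by the intermediate value theorem a solution $A_\alpha$ exists. To quantify its size, I will expand $L$ for small $A$. The integrand is $1+\tfrac12(4A^2\cos^22s-A^2\sin^22s)+O(A^4)$, so
\[
L(A)=2\pi+\tfrac{3\pi}{2}A^2+O(A^4).
\]
Comparing with $L(A_\alpha)=2\pi-\alpha=2\pi+|\alpha|$ gives $A_\alpha^2\simeq|\alpha|$. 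For the full range $\alpha\in(-\pi,0)$ I will show that $L$ is increasing on the relevant interval and that $L(A)\ge 2\pi+cA^2$ for $A\le A_{\max}$, where $A_{\max}$ is a fixed bound. This yields $A_\alpha\le C|\alpha|^{1/2}$ with $A_\alpha$ uniformly bounded. Since $|\alpha|<\pi$ is bounded anyway, what is really needed is only a uniform bound $A_\alpha\le A_{\max}$ together with $A_\alpha^2\le C|\alpha|$. The latter follows from a lower bound $L(A)-2\pi\ge cA^2$ on $[0,A_{\max}]$, which I expect to verify by writing $\sqrt{a+b}\ge\sqrt a+b/(2\sqrt{a+b})$ and using $\cos^2\ge 1-A^2\sin^2$ for small $A$, plus compactness for moderate $A$.

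\textbf{Isometric immersion and the bending bound.} $\Phi$ is a smooth closed curve on the unit sphere. It is regular, because its speed squared $\cos^2\theta+\theta'^2$ is positive: $|\theta|\le A_\alpha<\pi/2$ for the bounded $A_\alpha$ in question. Its length is $2\pi\ca$, so its constant-speed reparametrization $\Psi$ satisfies the hypotheses of Proposition~\ref{prop:first}, and $\sigma=r\Psi$ is an isometric immersion. By Corollary~\ref{cor:Ca}, the bending energy equals $C(\alpha)\log(1/\rho)$. To bound $C(\alpha)$ I use \eqref{eq:Ca_Phi}, which requires checking that $s'(t)<3$, i.e.\ that the ratio between the constant speed $\ca$ and the speed of $\Phi$ is at most $3$. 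This holds since $\ca<3/2$ and the speed of $\Phi$ is at least $\cos A_\alpha$, which is bounded below. It then remains to show
\[
\int_0^{2\pi}|\Phi\times\Phi''|^2\,ds\le C A_\alpha^2\le C|\alpha|.
\]

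\textbf{The main obstacle} is this last estimate: $\Phi''$ itself does not vanish as $A\to0$ (the equator has $\Phi''=-\Phi$), so the smallness must come from the cross product. I will compute $\Phi=\Phi_0+\theta\, e_3+O(\theta^2)$, where $\Phi_0=(\cos s,\sin s,0)$. Then $\Phi\times\Phi''=\Phi_0\times\Phi_0''+O(|\theta|+|\theta'|+|\theta''|)=O(A_\alpha)$, because $\Phi_0''=-\Phi_0$. More precisely, the $e_3$-component of $\Phi''$ is $\theta''-\theta+O(\theta^3)$, and the horizontal error terms are $O(A_\alpha)$ because $\theta,\theta',\theta''$ are all bounded by $CA_\alpha$ uniformly in $s$. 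Hence $|\Phi\times\Phi''|^2\le CA_\alpha^2$ pointwise. Together with $\ca\ge 1$ and $A_\alpha^2\le C|\alpha|$, this gives $C(\alpha)\le C|\alpha|$ and hence the claimed bound on the bending energy.
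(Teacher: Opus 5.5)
Your route is the same as the paper's. The length $2\pi\ca$ gives an isometric immersion via Proposition~\ref{prop:first}. A positive lower bound on $|\Phi'|$ bounds $s'$, so that \eqref{eq:Ca_Phi} applies. Then $|\Phi\times\Phi''|^2=O(A_\alpha^2)$ pointwise, and $A_\alpha^2\lesssim|\alpha|$ follows from $L(A)=2\pi+\tfrac{3\pi}{2}A^2+O(A^4)$. Two parts of your plan are more careful than the paper: the intermediate value argument for existence, and the global inequality $L(A)-2\pi\ge cA^2$. The paper only Taylor-expands for small $|\alpha|$ and tacitly assumes that small $|\alpha|$ forces small $A_\alpha$.

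\textbf{The gap.} The gap is the bound $A_\alpha<\pi/2$, which you assert (``for the bounded $A_\alpha$ in question'') but never derive. The facts $L(A)\to\infty$ and the intermediate value theorem only give \emph{some} finite $A_{\max}$ bounding all solutions of \eqref{eq:A_alpha}. The claim that $L$ is increasing ``on the relevant interval'' says nothing about where that interval ends. Your second step depends on this bound. The estimate $|\Phi'|\ge\cos A_\alpha$, via $\cos^2(A_\alpha\sin 2s)\ge\cos^2A_\alpha$, gives a uniform positive lower bound only if $A_\alpha$ stays uniformly below $\pi/2$. Without that, $s'=\ca/|\Phi'|$ is not uniformly bounded, and \eqref{eq:Ca_Phi} yields no $\alpha$-independent constant. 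The same bound is needed to make your ``$A_{\max}$'' an explicit interval on which the lower bound for $L$ is proved.

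\textbf{The fix.} The missing line is the one the paper uses. Drop the first term of the integrand to get $L(A)\ge 2A\int_0^{2\pi}|\cos 2s|\,ds=8A$. Hence $8A_\alpha\le 2\pi\ca<3\pi$, so $A_\alpha<3\pi/8$ for \emph{every} solution, and no monotonicity of $L$ is needed. Then $|\Phi'|\ge\cos(3\pi/8)$ and $s'\le\ca/\cos(3\pi/8)<4$. This is not the $s'<3$ you claim, but the numerical constant in \eqref{eq:Ca_Phi} plays no role.

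With $A\le 3\pi/8$ in hand, your own idea finishes the size estimate without compactness. Combine $\sqrt{a+b}\ge\sqrt a+b/(2\sqrt{a+b})$ with $\cos x\ge 1-x^2/2$ and $a+b\le 1+4A^2$. This gives $L(A)-2\pi\ge\pi A^2\bigl(\tfrac{2}{\sqrt{1+4A^2}}-\tfrac12\bigr)\ge\tfrac{\pi}{4}A^2$ on $[0,3\pi/8]$. Therefore $A_\alpha^2\le\tfrac{4}{\pi}|\alpha|$ for all $\alpha\in(-\pi,0)$.

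\textbf{A minor slip.} The $e_3$-component of $\Phi''$ is $\theta''\cos\theta-\theta'^2\sin\theta=\theta''+O(A_\alpha^3)$, not $\theta''-\theta$. This does not affect your $O(A_\alpha)$ bound. That bound also follows in one line from the exact identity $|\Phi\times\Phi''|^2=(\theta''+\sin\theta\cos\theta)^2+4\theta'^2\sin^2\theta$.
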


\begin{figure}[h]
\begin{center}
\includegraphics[height=2.0in]{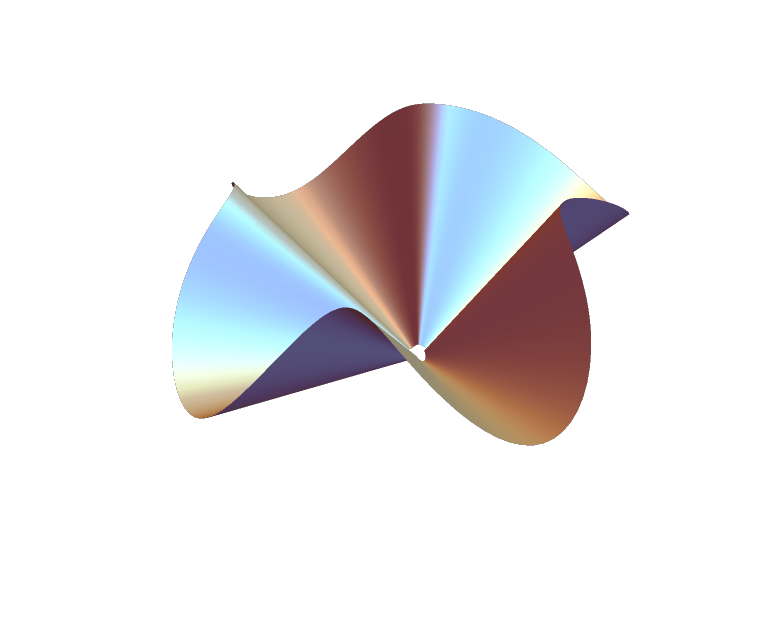}
\end{center}
\caption{A depiction of the isometric immersion for $e$-cones defined in Proposition~\ref{prop:negative_cone_ansatz}.}
\label{fig:Econe}
\end{figure}

\begin{proof}
First, we note that 
\[
|\Phi'(s)|^2 = \cos^2 \theta(s) + \theta'^2(s)
= \cos^2\brk{A_\alpha\,\sin 2s} + 4 A_\alpha^2\,\cos^2 2s,
\]
i.e., it follows from \eqref{eq:A_alpha} that $\Phi$ has total length $2\pi\ca$, hence $\sigma$ is an isometric immersion.

Next, we bound the speed of $\Phi$, so that \eqref{eq:Ca_Phi} can be used.
Denote the right-hand side of \eqref{eq:A_alpha} by $I(A_\alpha)$.
It is immediate that 
\[
2\pi c_\alpha = I(A_\alpha) \ge 2A_\alpha \int_0^{2\pi} |\cos(2s)|\,ds = 8A_\alpha,
\]
hence $A_\alpha \le (\pi/4) c_\alpha\le 3\pi/8$, where we have used the fact that $\ca \in [1,3/2]$.
Denote $a=\sin 2s$, so that 
\[
|\Phi'(s)|^2 = \cos^2(A_\alpha a) + 4A_\alpha^2 (1-a^2).
\]
It is easy to see this is a decreasing function of $a$ for $A_\alpha$ in this range.
Thus,
\[
|\Phi'(s)|^2 \ge \cos^2A_{\alpha} \ge  \cos^2\brk{\frac{3\pi}{8}} \approx 0.382,
\]
hence $|\Phi'(s)|^{-1} < 3$.

We now estimate the bending energy. 
Differentiating $\Phi(s)$ twice we find
\[
\begin{aligned}
|\Phi(s)\times\Phi''(s)|^2 &= (\theta''(s) + \sin\theta(s)\cos\theta(s))^2 + 4(\theta'(s))^2 \sin^2 \theta(s) 
= O(A_\alpha^2).
\end{aligned}
\]
Taylor expanding \eqref{eq:A_alpha} for small values of $A_\alpha$, we obtain that for small values of $|\alpha|$,
\[
c_\alpha = 1 + \frac34 A_\alpha^2 + O(A_\alpha^4)
\]
and thus, for small $|\alpha|$, 
\[
|\Phi(s)\times\Phi''(s)|^2 \sim A_\alpha^2 \sim |\alpha|.
\]
The bound on $\EB_{(\Sdiscrho,\gdisc)}(\sigma)$ now follows from \corrref{cor:Ca} and \eqref{eq:Ca_Phi}.
\end{proof}


%

Combining \corrref{cor:positive_cone_ansatz} and \propref{prop:negative_cone_ansatz} we have thus obtained:

\begin{corollary}
\label{cor:EB_sigma_disc}
Let $\alpha\in(-\pi,\pi)$ and $\rho \in (0,1)$. 
Then there exists an isometric immersion $\sigma$ of $(\Sdisc,\gdisc)$ in $\R^3$, such that
\[
\EB_{(\Sdiscrho,\gdisc)}(\sigma) \leq C |\alpha| \,\log\frac{1}{\rho},
\]
for some constant $C>0$ independent of $\alpha$ and $\rho$. 
\end{corollary}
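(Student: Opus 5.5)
The corollary follows by splitting into the cases $\alpha>0$, $\alpha<0$ and $\alpha=0$, and quoting the constructions already made. In each case we need an isometric immersion $\sigma$ of $(\Sdisc,\gdisc)$ whose bending energy on $\Sdiscrho$ is at most $C|\alpha|\log(1/\rho)$, with $C$ independent of both $\alpha$ and $\rho$. The only real issue is that uniformity.

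\textbf{Case $\alpha\in(0,\pi)$.} Take $\Psi$ to be the standard cone curve \eqref{eq:disclination_example1}. Then \corrref{cor:positive_cone_ansatz} gives the bending energy explicitly as
\[
\frac{2\pi}{\ca}\brk{\frac{\alpha}{\pi}-\frac{\alpha^2}{4\pi^2}}\log\frac1\rho .
\]
Since $\ca\in(1/2,1)$ on this range, the prefactor is at most $4\alpha$. This gives the bound with an absolute constant.

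\textbf{Case $\alpha\in(-\pi,0)$.} Use \propref{prop:negative_cone_ansatz}. There the energy is bounded through \corrref{cor:Ca} and \eqref{eq:Ca_Phi} by
\[
\frac{3}{\ca^3}\int_0^{2\pi}|\Phi\times\Phi''|^2\,ds\,\log\frac1\rho,
\]
with $\ca\in(1,3/2)$. The step needing care is showing that $\int|\Phi\times\Phi''|^2\le C|\alpha|$ uniformly in $\alpha$, not only as $\alpha\to0$.

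\begin{itemize}
\item For small $|\alpha|$: the expression $(\theta''+\sin\theta\cos\theta)^2+4\theta'^2\sin^2\theta$ is $O(A_\alpha^2)$ with an absolute constant, since $A_\alpha\le 3\pi/8$. The expansion $\ca-1=\tfrac34A_\alpha^2+O(A_\alpha^4)$ then gives $A_\alpha^2\le C(\ca-1)=C|\alpha|/2\pi$.
\item To make this global: the map $A\mapsto I(A)$ from \eqref{eq:A_alpha} is continuous, satisfies $I(A)-2\pi\ge cA^2$ for small $A$, and is strictly larger than $2\pi$ for $A>0$ in the range $[0,3\pi/8]$. Hence $\inf_{A\in(0,3\pi/8]}(I(A)-2\pi)/A^2>0$ by compactness. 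I expect this to be the main, though still routine, obstacle.
\item It follows that $A_\alpha^2\le C(2\pi\ca-2\pi)=C|\alpha|$ for all $\alpha\in(-\pi,0)$, and hence $\int|\Phi\times\Phi''|^2\le CA_\alpha^2\le C|\alpha|$.
\end{itemize}

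\textbf{Case $\alpha=0$.} The flat plane $\Psi(t)=(\cos t,\sin t,0)$ is an isometric immersion with zero bending energy, so the bound holds trivially.

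Combining the three cases gives an absolute constant $C$, which proves the corollary.
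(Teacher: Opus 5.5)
Your proposal is correct and matches the paper, which obtains this corollary simply by combining \corrref{cor:positive_cone_ansatz} and \propref{prop:negative_cone_ansatz}. Your separate treatment of $\alpha=0$ and your uniformity-in-$\alpha$ argument for the $e$-cone are more careful than the paper, which only records $A_\alpha^2\sim|\alpha|$ for small $|\alpha|$. The one claim you leave unjustified, $I(A)>2\pi$ for $A\in(0,3\pi/8]$, follows directly: $\Phi_A$ passes through the antipodal points $\pm e_1$ (at $s=0,\pi$), so each of its two arcs between them has length at least $\pi$, and the arc $s\in[0,\pi]$ has length strictly greater than $\pi$ because it is not a great semicircle (it passes through $\Phi_A(\pi/2)=e_2$ but leaves the equator at $s=\pi/4$).
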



We now regularize the isometric immersion $\sigma$ near the apex of the cone to complete the proof of \thmref{thm:disclination}:

\begin{proposition}
\label{prop:disc_ub}
Let $\alpha \in (-\pi,\pi)$ and $h<1/2$.
Then, for some universal constant $C>0$, there exists a configuration $\sigma_h:\Sdisc\to \R^3$ satisfying $|d\sigma_h| < C$, and such that
\beq\label{eq:s_plus_b_disloc}
\tEP_{(\S,\Kdisc)} (\sigma_h) \le C|\alpha|h^2 \log\frac{1}{h}.
\eeq
It follows from \propref{prop:kirchhoff_love2} that there exist configurations $f_h\in W^{1,2}(\Mdisc;\R^3)$ such that $\Edisc(f_h) \le C|\alpha|h^2 \log(1/h)$.
\end{proposition}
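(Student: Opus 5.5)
Take $\sigma$ to be the isometric immersion $\sigma(r,\vp)=r\Psi(\vp)$ of \corrref{cor:EB_sigma_disc}, and cut it off near the apex by setting
\[
\sigma_h(r,\vp) = \eta_h(r)\, r\,\Psi(\vp),
\]
with $\eta_h$ a radial profile that equals $1$ for $r\ge 2h$ and vanishes on a neighbourhood of the origin. A simpler alternative, which I would try first, is to rescale the cone there: set $\sigma_h(r,\vp)=\lambda_h(r)\Psi(\vp)$ with $\lambda_h(r)=r$ for $r\ge h$ and $\lambda_h$ a smooth, monotone, bounded-slope interpolation on $(0,h)$ (for instance $\lambda_h(r)=r^2/h$ suitably smoothed). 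Then $\partial_r\sigma_h=\lambda_h'\Psi$ and $\partial_\vp\sigma_h=\lambda_h\Psi'$, so $|d\sigma_h|_{\gdisc}\le C$, since $|\Psi|=1$, $|\Psi'|=\ca$, and $\lambda_h/r\le 1$. Moreover $\sigma_h\in W^{1,\infty}\cap W^{2,2}$; it need not be an immersion at $r=0$, which is exactly why the modified energy $\tEP$ and \propref{prop:kirchhoff_love2} are used.

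Split $\Sdisc$ into the outer region $\Sdisc_h=\{r>h\}$ and the core $\{r<h\}$. On the outer region $\sigma_h=\sigma$ is an isometric immersion, so the stretching term vanishes. By \corrref{cor:EB_sigma_disc} with $\rho=h$, the bending contribution is $h^2\,\EB\le C|\alpha|h^2\log(1/h)$, where I would check that $\tEP$ reduces to $\ES+h^2\EB$ wherever $\sigma$ is a smooth immersion. The term $Ch^2\|d\sigma_h\|_4^4$ in \propref{prop:kirchhoff_love2} is $O(h^2)$ from the whole domain; this does not carry the factor $|\alpha|\log(1/h)$, so it has to be handled separately (see below).

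The core region has area $O(h^2)$. There the stretching density is $\dist^2(d\sigma_h,\ \text{isometries})$, which is bounded by $C(\lambda_h'-1)^2+C(\lambda_h/r-1)^2$ and so is $O(1)$; it contributes $O(h^2)$. The second derivatives satisfy $|\nabla^2\sigma_h|\lesssim |\lambda_h''|+|\lambda_h'|/r+\lambda_h/r^2\cdot|\Psi''|$, which is $O(1/h)$ for the quadratic profile, so the bending contribution is $h^2\cdot h^{-2}\cdot h^2=O(h^2)$. The main obstacle is that these core terms, and the $\|d\sigma\|_4^4$ term, are $O(h^2)$ rather than $O(|\alpha|h^2)$.

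To repair this I would make the core deviation proportional to $\alpha$. Interpolate toward the flat map $\sigma^0(r,\vp)=r(\cos\vp,\sin\vp,0)$ rather than toward $0$, using $\sigma_h=r\,\Psi_{\chi(r)}$, where $\Psi_s$ is a family of loops on the sphere deforming the flat circle ($s=0$) to $\Psi$ ($s=1$) and $\chi$ is a cutoff with $\chi=0$ on $r<h$ and $\chi=1$ on $r>2h$. Both cone ansatzes depend on $\alpha$ through a single amplitude ($\sqrt{1-\ca^2}\sim|\alpha|^{1/2}$ for $\alpha>0$, and $A_\alpha\sim|\alpha|^{1/2}$ for $\alpha<0$), so one can take $\Psi_s$ to be the same ansatz with amplitude scaled by $s$. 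The metric defect of this interpolation is $O(|\ca-1|)=O(|\alpha|)$ in the stretching density, and its squared second derivatives are $O(|\alpha|)/h^2$ on the annulus $h<r<2h$, so the core contributes $O(|\alpha|h^2)$. The $\|d\sigma\|_4^4$ term should similarly be reorganised, or subtracted against the flat reference, so that only an $O(|\alpha|)$ excess remains. Verifying that this last term indeed reduces to $O(|\alpha|h^2)$ via the precise form of \propref{prop:kirchhoff_love2} is the step I expect to be delicate. Once it holds, summing all contributions gives \eqref{eq:s_plus_b_disloc}, and \propref{prop:kirchhoff_love2} yields $f_h$.
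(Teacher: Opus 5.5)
\textbf{There is a genuine gap, in the estimation route rather than the construction.} Your final ansatz is the paper's: $\sigma_h=r\Psi_{\chi(r)}(\vp)$, with the cone amplitude scaled by a radial cutoff so that $\sigma_h$ is flat near the apex (the paper uses $\eta(r/h)$, which is flat on $\{r<h/2\}$ and conical on $\{r>h\}$). The problem is that you estimate it through $\tEP$ and \propref{prop:kirchhoff_love2}, and that route does not close.

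\textbf{The geometric Hessian of the flat core is not square-integrable.} The Hessian in $\tEB$ is the $\gdisc$-covariant one, and the flat map $\sigma^0=r(\cos\vp,\sin\vp,0)$ is not affine for the cone metric. Since $\Gamma^r_{\vp\vp}=-\ca^2 r$, one gets $\nabla^{\gdisc}d\sigma^0(\partial_\vp,\partial_\vp)=(\ca^2-1)\,r\,(\cos\vp,\sin\vp,0)$. Hence $|\nabla^{\gdisc}d\sigma^0|_{\gdisc}^2\ge(\ca^2-1)^2/(\ca^4r^2)$, which is not integrable against $\dVol{\gdisc}=\ca r\,dr\,d\vp$ near $r=0$. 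So $\tEP(\sigma_h)=+\infty$ for every $\alpha\neq0$. Your core estimate treats the flat part as having vanishing second derivatives. That is true for the coordinate Hessian, not for $\nabla^{\gdisc}$. Relatedly, $\tEB$ and $\EB$ agree only where $\sigma$ is an \emph{isometric} immersion, not wherever it is a smooth immersion.

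\textbf{The additive term cannot be removed.} The term $Ch^2\|d\sigma_h\|_4^4$ in \propref{prop:kirchhoff_love2} is of order $h^2$ with no factor of $\alpha$. The cone metric has holonomy $\alpha$ around the apex, so it admits no nontrivial parallel field, and the proposition gives no vanishing of $C$. In its proof, $C$ is controlled by $\sup|\nabla^{\gdisc}v_i|$ for a $\gdisc$-orthonormal frame, and the holonomy forces this to blow up like $|\alpha|/r$. Nothing in that proposition lets you subtract a flat reference. Since $|\alpha|\log(1/h)$ may be small, this term alone is larger than the target bound, so the step you flagged as delicate cannot be completed along this route.

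\textbf{The missing observation is that your $\sigma_h$ is a genuine immersion of all of $\Sdisc$.} Near the origin it is the planar inclusion. For $r>0$, the vector $\partial_r\sigma_h=\Psi_{\chi}+r\chi'\,\partial_s\Psi_s$ has the unit component $\Psi_\chi$ normal to the sphere, while $\partial_\vp\sigma_h=r\Psi_\chi'$ is tangent to it. You should therefore estimate $\EPlate(\sigma_h)$ instead:
\begin{itemize}
\item The bending density $|d\frakn_h|_{\gdisc}^2$ vanishes identically on the flat core.
\item On the transition annulus it is $O(|\alpha|/h^2)$, since the amplitude squared is $\lesssim|\alpha|$ and $|\chi'|\lesssim1/h$.
\item The stretching density is $O(|\alpha|)$ on $\{r<2h\}$.
\item On $\{r>2h\}$, \corrref{cor:EB_sigma_disc} controls the bending.
\end{itemize}
Together these give $\EPlate(\sigma_h)\le C|\alpha|h^2\log(1/h)$. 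Then \propref{prop:kirchhoff_love} yields $\Edisc(f_h)\le 2\,\EPlate(\sigma_h)$ with no additive error term.

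This is how the paper's proof actually concludes. Although the statement is phrased with $\tEP$, the proof bounds $|d\frakn_h|$ and invokes \propref{prop:kirchhoff_love}, not \propref{prop:kirchhoff_love2}.
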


\begin{proof}
Fix a smooth nondecreasing function $\eta:[0,\infty)\to[0,1]$ such that
$\eta=0$ on $[0,1/2]$ and $\eta=1$ on $[1,\infty)$. We treat positive
and negative disclinations separately.

\emph{Case 1: $\alpha>0$.}
Set
\[
q_\alpha:=\sqrt{1-c_\alpha^2}.
\]
For $q\in[0,q_\alpha]$, define
\[
\Psi_q(\vp):=
\big(\sqrt{1-q^2}\cos\vp,\sqrt{1-q^2}\sin\vp,q\big).
\]
Then $\Psi_0(\vp)=(\cos\vp,\sin\vp,0)$, while
$\Psi_{q_\alpha}$ is the curve in
\eqref{eq:disclination_example1}. Moreover,
\[
q_\alpha^2=1-c_\alpha^2\le C\alpha.
\]
Define
\[
q_h(r):=\eta(r/h)q_\alpha,
\qquad
\sigma_h(r,\vp):=r\Psi_{q_h(r)}(\vp).
\]
Thus $\sigma_h$ is the flat configuration for $r<h/2$ and agrees with
the conical isometric immersion of
\corrref{cor:positive_cone_ansatz} for $r>h$.
Since $|q_h'|\le Cq_\alpha/h$, direct differentiation gives, for $r<h$,
\[
\dist^2\big(d\sigma_h\circ\Kdisc^{-1},O(2,3)\big)
   \le Cq_\alpha^2,
\qquad
|d\frakn_h|_{\gdisc}^2
   \le C\frac{q_\alpha^2}{h^2},
\]
where $\frakn_h$ is the unit normal to $\sigma_h$. Since
$\Vol_{\gdisc}(\{r<h\})\le Ch^2$, it follows that
\[
\ES_{(\Sdisc,\Kdisc)}(\sigma_h)
+h^2\EB_{\{r<h\}}(\sigma_h)
\le C\alpha h^2.
\]
On $\Sdisch$, $\sigma_h$ is an isometric immersion, and
\corrref{cor:EB_sigma_disc} gives
\[
h^2\EB_{(\Sdisch,\gdisc)}(\sigma_h)
\le C\alpha h^2\log\frac1h.
\]

\emph{Case 2: $\alpha<0$.}
Let $A_\alpha>0$ be the parameter introduced in
\propref{prop:negative_cone_ansatz}, namely, a solution of
\eqref{eq:A_alpha},
\[
2\pi c_\alpha
=
\int_0^{2\pi}
\left(
\cos^2(A_\alpha\sin 2s)
+4A_\alpha^2\cos^2 2s
\right)^{1/2}\,ds.
\]
For $A\in[0,A_\alpha]$, let
\[
\Phi_A(s)
=
\big(
\cos(A\sin 2s)\cos s,\,
\cos(A\sin 2s)\sin s,\,
\sin(A\sin 2s)
\big),
\]
and denote by $\Psi_A$ its constant-speed parametrization. Thus
$\Psi_0(\vp)=(\cos\vp,\sin\vp,0)$, while $\Psi_{A_\alpha}$ is the curve
constructed in \propref{prop:negative_cone_ansatz}. The estimates in
the proof of that proposition give
\[
A_\alpha^2\le C|\alpha|.
\]
Moreover, since $A_\alpha\le 3\pi/8$ and $|\Phi_A'|$ is uniformly
bounded away from zero, the family $\Psi_A$ depends smoothly on $A$,
with uniform $C^2$ bounds; in particular,
\[
\|\Psi_A-\Psi_0\|_{C^2}\le CA.
\]
Set
\[
A_h(r):=\eta(r/h)A_\alpha,
\qquad
\sigma_h(r,\vp):=r\Psi_{A_h(r)}(\vp).
\]
As in the positive case, direct differentiation yields, for $r<h$,
\[
\dist^2\big(d\sigma_h\circ\Kdisc^{-1},O(2,3)\big)
   \le CA_\alpha^2,
\qquad
|d\frakn_h|_{\gdisc}^2
   \le C\frac{A_\alpha^2}{h^2}.
\]
Hence the contribution of $\{r<h\}$ to $\EPlate$ is bounded by
$C|\alpha|h^2$. On $\Sdisch$, $\sigma_h$ agrees with the isometric
immersion of \propref{prop:negative_cone_ansatz}, and therefore
\[
h^2\EB_{(\Sdisch,\gdisc)}(\sigma_h)
\le C|\alpha|h^2\log\frac1h.
\]

Thus, both in the case $\alpha>0$ and $\alpha<0$,
\[
\EPlate_{(\Sdisc,\Kdisc)}(\sigma_h)
\le C|\alpha|h^2\left(1+\log\frac1h\right)
\le C|\alpha|h^2\log\frac1h,
\]
where we used $h<1/2$. Finally, \propref{prop:kirchhoff_love} gives a
Kirchhoff--Love extension $f_h$ satisfying
\[
\Edisc(f_h)
\le 2\EPlate_{(\Sdisc,\Kdisc)}(\sigma_h)
\le C|\alpha|h^2\log\frac1h.
\]
\end{proof}

\subsection{Dislocations}
\label{subsect:dislupper}

We  proceed to prove the upper bounds for thin elastic sheets with edge-dislocations. 

\subsubsection{The flat configuration}

Like for disclinations, we start by proving an upper bound for a flat configuration, using  the three-dimensional setting.

\begin{theorem}
\label{thm:disloc_ub_e_ll_h}
Let $\Edisl$ be  the energy associated with a dislocation. Then there exists a constant $C$ independent of $\e$ and $h$, such that
\[
\inf \Edisl \leq C \e^2\,\log\frac{1}{\e}.
\]
\end{theorem}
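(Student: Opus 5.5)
We will bound the energy of the coordinate inclusion $f=\iota$, $\iota(r,\vp,z)=(r\cos\vp,r\sin\vp,z)$, directly in the three-dimensional setting, in the same way as \propref{prop:flat_disloc}; the infimum is then at most $\Edisl(\iota)$.

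\textbf{Pointwise estimate.} Since $\Pdisl\in \SO(\Gdisl,\euc)$ pointwise, we have $\Pdisl\circ\Pdisl^{-1}=I\in\SO(3)$. Hence
\[
\dist\brk{d\iota\circ\Pdisl^{-1},\SO(3)}\le |d\iota\circ\Pdisl^{-1}-I|_{\euc}=|(d\iota-\Pdisl)\circ\Pdisl^{-1}|_{\euc}\le |d\iota-\Pdisl|_{\Gdisl,\euc},
\]
because $\Pdisl^{-1}$ is an isometry from $(\R^3,\euc)$ to $(T\M,\Gdisl)$. By \propref{prop:prelim}(b) this is at most $\e/r$. It is also uniformly bounded, since $r\ge\rho=\e$. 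One remark on the regime: the constant in \eqref{eq:disc_iota_est} is uniform on $\{r>\e\}$, and the inner boundary sits exactly at $r=\e$. Still, $|d\iota-\Pdisl|_{\Gdisl,\euc}=\frac{\e}{2\pi}|d\vp|_{\Gdisl}$, and \propref{prop:prelim}(a) gives $|r\,d\vp|_{\Gdisl}\le 1+\frac{\e}{2\pi r-\e}\le 2$ for $r\ge\e$. So the bound $C\e/r$ holds on all of $\Mdisl$.

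\textbf{Integration.} By \propref{prop:prelim}, the volume form satisfies $\VolGe\le C\,r\,dr\,d\vp\,dz$, and the total volume is comparable to $h$. Therefore
\[
\Edisl(\iota)\le \frac{C}{h}\int_0^h\int_0^{2\pi}\int_\e^1 \frac{\e^2}{r^2}\,r\,dr\,d\vp\,dz = C\e^2\log\frac1\e .
\]

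\textbf{Main obstacle.} The only delicate point is the uniformity of the metric comparison near the core $r\approx\e$, where $\Pdisl$ is close to degenerate. There I would check the volume density directly: $\det\Pdisl$ in the $(r,\vp,z)$ coordinates equals $r+\frac{\e}{2\pi}(\text{bounded})$, which is comparable to $r$ for $r\ge\e$. The same comparison gives the lower bound on the total volume. With that in place, the rest is routine.
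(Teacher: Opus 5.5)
Your proposal is correct and follows essentially the same route as the paper: evaluate the energy of the flat coordinate inclusion $\iota$, bound the integrand pointwise by $C\e^2/r^2$ using $|\Pdisl - d\iota|_{\Gdisl,\euc}\lesssim \e/r$ from \propref{prop:prelim}(b), integrate against a volume form comparable to $r\,dr\,d\vp\,dz$, and divide by the total volume, which is comparable to $h$. Your direct computation that $\det\Pdisl = r - \frac{\e}{2\pi}\sin\vp$ is comparable to $r$ for $r>\e$ simply makes explicit the bi-Lipschitz equivalence the paper invokes.
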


\begin{proof}
Take the coordinate inclusion map $f = \iota : \Mdisl \to \R^3$, as described in \propref{prop:prelim}.
By \eqref{eq:disc_iota_est}, 
\[
|\Pdisl - d\iota|_{\Gdisl,\euc} \lesssim  \frac{\e}{r},
\]
hence
\[
\dist^2(d\iota\circ\Pdisl,\SO(3)) \lesssim \frac{\e^2}{r^2}.
\]
Using the bilipschitz equivalence between $(\Mdisl,\Gdisl)$ and $(\Mdisl,\iota^*\euc)$,  
\[
\int_{\Mdisl} \dist^2(d\iota\circ\Pdisl,\SO(3))\,\VolGe \le C \int_{\Mdisl} \dist^2(d\iota\circ\Pdisl,\SO(3))\circ\iota^{-1}\,\VolE \le C h\e^2\,\log\frac{1}{\e}.
\]
Dividing by the volume of the body, we obtain the required bound.
\end{proof}


\subsubsection{The zero stretching configuration}

In this subsection, we prove an upper bound based on an isometric immersion of the mid-surface, similarly to the bound obtained for disclinations. Unlike the in the disclination case, there is no need to regularize the solution by allowing for stretching in the vicinity of an apex.

One such construction is presented in \cite{GHKM13}, connecting  a positive disclination and a negative disclination of opposite charge whose apexes do not coincide via two flat slabs. 
An even more explicit construction, albeit a ``non-physical'' one, yielding an immersion rather than an embedding, yields a configuration resembling a double-folded cone. We present here the latter construction.
 
\begin{theorem}
\label{thm:disloc_ub_h_ll_e}
Let $\Edisl$ be the energy associated with a dislocation. There exists a constant $C$ independent of $\e$ and $h$, such that
\[
\inf \Edisl \leq C h^2\,\log\frac{1}{\e}.
\]
\end{theorem}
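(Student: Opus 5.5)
The plan is to construct an explicit isometric immersion $\sigma:(\Sdisl,\gdisl)\to\R^3$ of the dislocated annulus $\{\e<r<1\}$ whose bending energy is $O(\log(1/\e))$, and then apply \propref{prop:kirchhoff_love}. Since $\sigma$ has zero stretching, this gives $\Edisl(f)\le 2\EPlate_{(\Sdisl,\Kdisl)}(\sigma)=2h^2\EB(\sigma)\le Ch^2\log(1/\e)$. No regularization near the core is needed, because the domain already excludes $r<\e$.

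The construction follows the ``double-folded cone'' idea and uses the developing map of the flat metric $\gdisl$. On the slit domain, $\Kdisl$ integrates to the planar map $\Psi(r,\vp)=re_r+(\e/2\pi)\vp e_1$. Going once around the core produces a translation by $\e e_1$ with no rotation. Alternatively, one can realize $\gdisl$ by cutting the plane along a ray and inserting or removing a strip of width $\e$.

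I would realize this translational holonomy with piecewise-conical pieces, in the spirit of \propref{prop:first}. Take two half-planes, corresponding to $\vp\in(0,\pi)$ and $\vp\in(\pi,2\pi)$, each developed isometrically. Glue them along two rays emanating from nearby points $p_\pm$ at distance $\sim\e$ from the origin, and bend them with cones of opposite small angle $\pm\beta$ centered at $p_\pm$. Here $\beta$ is fixed of order one, or chosen so that the mismatch equals $\e$. Concretely, one cone is a positive disclination of angle $\beta$ centered at $p_+$ and the other is an $e$-cone of angle $-\beta$ at $p_-$, with $|p_+-p_-|\sim\e/\beta$. Their combined holonomy is a rotation by $\beta$ followed by a rotation by $-\beta$ about a shifted center, which is a translation of size $\beta|p_+-p_-|\sim\e$. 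This is exactly the dislocation geometry. Immersing each cone by \corrref{cor:positive_cone_ansatz} or \propref{prop:negative_cone_ansatz}, and joining them by developable (flat or folded) pieces, gives an immersion whose normal satisfies $|d\frakn|\lesssim 1/\dist(x,\{p_\pm\})$. For this estimate I would use the formula $|d\frakn|^2=|\Psi\times\Psi''|^2/(\ca^4 r^2)$ on each conical piece. Integrating over $\{\e<r<1\}$ then gives $\EB\le C\int_\e^1 dr/r=C\log(1/\e)$, provided the cone apexes lie in the removed core $\{r\le\e\}$, i.e.\ $|p_\pm|\lesssim\e$ with $\beta\sim1$.

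The main obstacle is the gluing. The folds along the seams must be genuine $C^{1}$ (or at least $W^{2,2}$) interfaces, since a crease would cost infinite bending energy. Near the core, the apexes must stay inside $r<\e$ while the distortion estimates of \propref{prop:prelim} keep the coordinates comparable to Euclidean ones. To handle the seams, I would first try to make each radial line a ruling, i.e.\ a cone-type map $\sigma=\mathrm{apex}+s\,\Psi(\vp)$, so that smoothness across the seams reduces to matching the curves $\Psi$ and their tangents on the unit sphere. If the two-apex configuration proves awkward, my fallback is to mollify the curve in $C^2$, accept a non-immersion, and use \propref{prop:kirchhoff_love2} with bounded $\|d\sigma\|_\infty$ instead.
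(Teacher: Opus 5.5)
There is a genuine gap: the isometric immersion is never actually constructed, and the step you defer as ``the main obstacle'' is the entire content of the theorem.

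(i) Matching the affine holonomy of a disclination dipole with the translation $\e e_1$ does not by itself show that the specific annulus $\{\e<r<1\}$ with metric $\gdisl$ embeds isometrically in a dipole surface. You also need both apexes at distance $\gtrsim r$ from every point of the annulus, and this requires an explicit identification. Your description (two flat half-planes glued along two rays and then ``bent with cones'') conflates intrinsic cone points with extrinsic bending. Intrinsically, the dipole surface is, e.g., two cone sectors of angles $\pi\mp\beta$ at $p_\pm$ joined by two flat half-strips of width $|p_+-p_-|$.

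(ii) The immersions of \corrref{cor:positive_cone_ansatz} and \propref{prop:negative_cone_ansatz} are full cones. To join sectors of them to flat strips in a $C^1$ way, the tangent planes along the junction rulings must coincide with the planes of the strips. This forces the spherical generating curve of each sector to have a prescribed length and to start and end at prescribed points with prescribed tangents. You neither formulate nor verify these conditions, and for the $e$-cone a new curve has to be designed. Until then, the estimate $|d\frakn|\lesssim 1/\dist(x,\{p_\pm\})$ refers to an object that has not been shown to exist.

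(iii) The fallbacks do not close the gap. A single-apex map $p+s\Psi(\vp)$ induces a cone metric, whose holonomy fixes $p$. It therefore can never be isometric to $\gdisl$, whose holonomy is a nonzero translation. Mollifying a glued two-apex surface destroys the isometry, so invoking \propref{prop:kirchhoff_love2} would require a bound $\ES\lesssim h^2\log(1/\e)$ on the stretching this creates, which you do not provide.

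The idea the paper uses is to keep the radial lines as rulings but let the apex move along a curve of size $\e$:
$\sigma(r,\vp)=\e\gamma(\vp)+(r-\e)\nu(\vp)$ with $\nu:\bbS^1\to\bbS^2$.
\begin{itemize}
\item Equating $\sigma^*\euc$ with $\gdisl$ power by power in $r-\e$ gives $|\nu'|=1$ and $\gamma'=\frac{1}{2\pi}\cos\vp\,\nu+\brk{1-\frac{1}{2\pi}\sin\vp}\nu'$.
\item Closedness of $\gamma$ reduces, after an integration by parts, to $\int_{\bbS^1}\cos\vp\,\nu\,d\vp=0$. This is satisfied by $\nu=\frac12(\cos2\vp,\sin2\vp,\sqrt3)$, a double-folded cone.
\item The normal is $\frakn=\nu\times\nu'$, a function of $\vp$ alone, so $|d\frakn|_{\gdisl}\lesssim 1/r$ and $\EB\lesssim\log(1/\e)$. \propref{prop:kirchhoff_love} then concludes.
\end{itemize}
The dipole route you sketch is the alternative the paper attributes to \cite{GHKM13}, and it is plausible. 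As written, however, it is a heuristic rather than a proof.
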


\begin{proof}
Following the procedure outlined in the beginning of this section, it suffices to construct an isometric immersion $\sigma:\Sdisl\to\R^3$ whose bending energy is bounded from above by
\[
\EB_{(\Sdisl,\Kdisl)}(\sigma) \le C\, \log\frac{1}{\e}
\] 
for some constant $C$.

The two-dimensional prestrain map is given  in polar coordinates by 
\[
\Kdisl =  e_1\otimes (\cos\vp\, dr - r\sin\vp\,d\vp)  + e_2\otimes (\sin\vp\, dr + r\cos\vp\,d\vp)  +  \frac{\e}{2\pi} e_1\otimes d\vp(x,y,z),
\]
yielding the following two-dimensional metric 
\[
(\partial_r,\partial_r)_{\gdisl} = 1
\qquad
(\partial_r,\partial_\vp)_{\gdisl} = \frac{\e}{2\pi}\cos\vp
\qquad
(\partial_\vp,\partial_\vp)_{\gdisl} = r^2 - \frac{\e}{\pi} r\,\sin\vp + \brk{\frac{\e}{2\pi}}^2.
\]
Our ansatz consists of immersions for which the radial geodesics are asymptotic curves, i.e., map into spatial geodesics,
\[
\sigma(r,\vp) = \e\, \gamma(\vp) + \nu(\vp)(r - \e),
\]
where $\gamma:\bbS^1\to\R^3$ and $\nu:\bbS^1\to\bbS^2$. 
A direct calculation yields that the pullback metric $\sigma^*\euc$ is given by
\[
(\partial_r,\partial_r)_{\sigma^*\euc}  = 1
\qquad
(\partial_r,\partial_\vp)_{\sigma^*\euc}  = \e (\nu,\gamma')_\euc
\qquad
(\partial_\vp,\partial_\vp)_{\sigma^*\euc}  = \e^2 |\gamma'|_\euc^2 + 2 \e (\gamma',\nu')_\euc(r - \e) +  |\nu'|_\euc^2(r - \e)^2,
\]
using the fact that $(\nu,\nu')_\euc = 0$.
Equating $\gdisl =  \sigma^*\euc$ and matching same powers of $r$, we obtain the following set of equations.
\[
\begin{aligned}
& (\nu,\gamma')_\euc = \frac{1}{2\pi}\cos\vp \\
& |\nu'|_\euc^2 = 1 \\
& (\gamma',\nu')_\euc  = 1 - \frac{1}{2\pi} \,\sin\vp \\
& |\gamma'|_\euc^2  = 2 (\gamma',\nu')_\euc  - 1 +   \brk{\frac{1}{2\pi}}^2.
\end{aligned}
\]

Note that
\[
(\gamma',\nu)_\e^2 +  (\gamma',\nu')_\e^2  = \frac{1}{4\pi^2}\cos^2\vp + \brk{1 - \frac{1}{2\pi} \,\sin\vp}^2 = |\gamma'|_\e^2.
\]
Since both $\nu$ and $\nu'$ are unit vectors, this implies that
\[
\gamma' =  \frac{1}{2\pi}\cos\vp  \, \nu + \brk{ 1 - \frac{1}{2\pi} \,\sin\vp}\, \nu'.
\]
Thus, we need to find a mapping $\nu:\bbS^1\to\bbS^2$, whose derivative has unit magnitude, such that the integral of $\gamma'$ vanishes. Integrating by parts, we obtain the following constraint
\beq
\int_{\bbS^1} \cos\vp  \, \nu \, d\vp = 0.
\label{eq:closed_gamma}
\eeq
While the constraint that radial lines are asymptotic curves is quite restrictive, there are multiple solutions, none of which, however, is an embedding (condition \eqref{eq:closed_gamma} annihilates the first Fourier mode). For example,
\[
\nu(\vp) = \half \mymat{\cos2\vp & \sin2\vp & \sqrt{3}},
\]
is a valid solution. The corresponding immersion is shown in \figref{fig:2fold}.

\begin{figure}
\centerline{\includegraphics[height=2.2in]{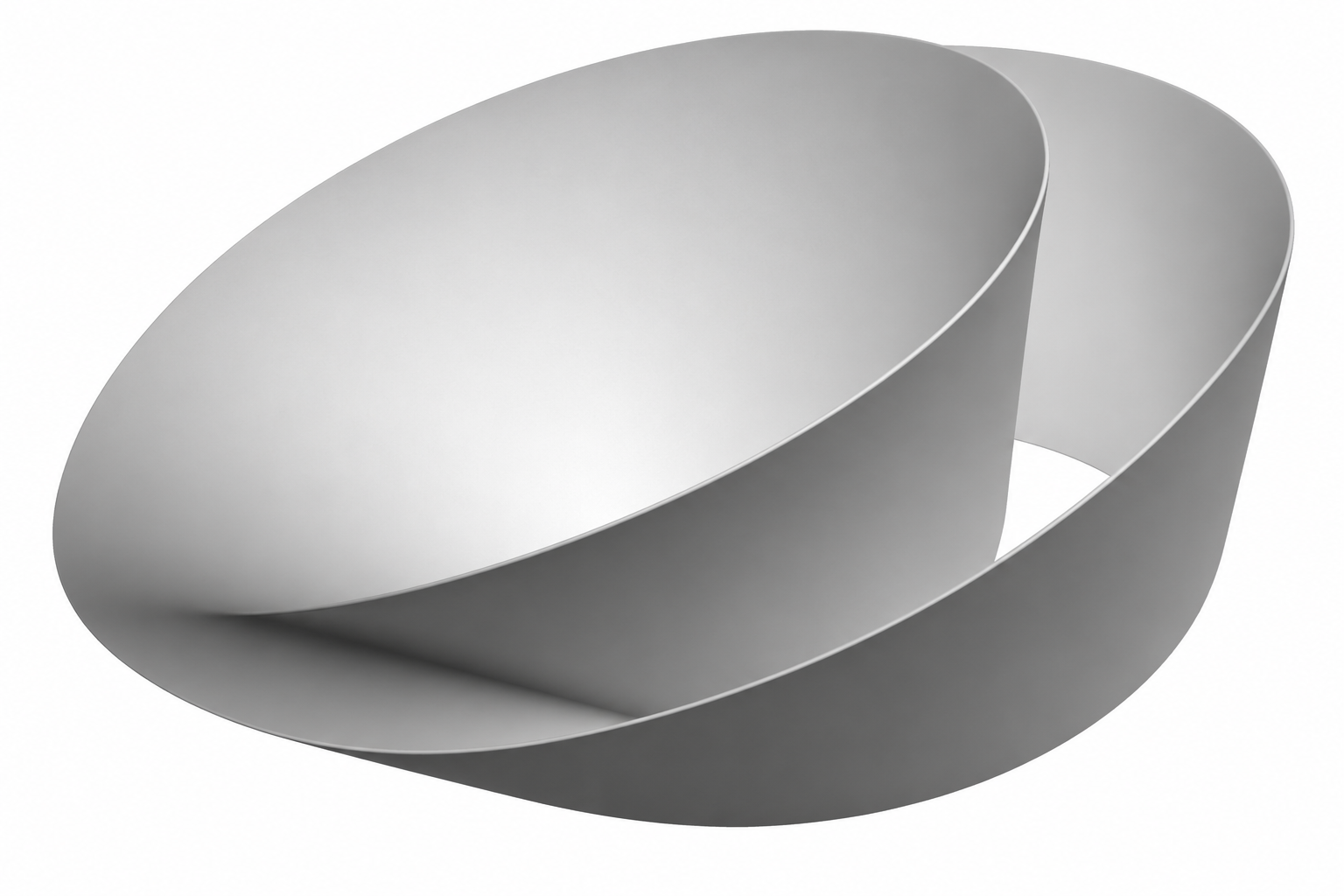}}
\caption{A surface with a dislocation immersed isometrically into $\R^3$.
Note that even for $\e=0$, a punctured plane may be immersed in $\R^3$ as a double-folded cone. }
\label{fig:2fold}
\end{figure}

It remains to show that there exists a constant $C>0$ independent of $\e$, such that
\[
\EB_{(\Sdisl,\Kdisl)}(\sigma) \leq C\,\log\frac{1}{\e}.
\]
An explicit calculation yields the same logarithmic divergence in $1/\e$, as for disclinations. 

\end{proof}
 
\subsubsection{The single fold construction}

In this section, we present  an upper bound relevant in a regime in which $h$ and $\e$ are comparable. In the previous cases, the ansatz was either a flat configuration or an isometric immersion of $(\Sdisl,\gdisl)$, and the energy resulted either from pure stretching or from pure bending. 
In the present case, the ansatz has comparable stretching and bending energies; the construction, of which we will only provide a sketch, was derived by Conti \cite{Con23}. 

\begin{theorem}[\cite{Con23}]
\label{thm:disloc_ub_h_sim_e}
Let $\Edisl$ be the energy associated with a dislocation.
There exists a constant $C$ independent of $\e$ and $h$, such that
\[
\inf \Edisl \leq C (\e^2 + h^2).
\]
\end{theorem}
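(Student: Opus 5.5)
We need a construction with energy of order $\e^2+h^2$. First, some reductions. If $h\le \e$, the bound follows from Theorem~\ref{thm:disloc_ub_h_ll_e} only up to a logarithm, so we need a genuine construction. The regime that matters is $\e\lesssim h$. There, flat gives $\e^2\log(1/\e)$, while we want $\e^2+h^2$; this is better only when $h^2\ll \e^2\log(1/\e)$. Conversely, if $h\ge \e\sqrt{\log(1/\e)}$, the flat configuration already gives $C\e^2\log(1/\e)\le C h^2$. If $h\le \e$ and $h^2\log(1/\e)\le \e^2$, Theorem~\ref{thm:disloc_ub_h_ll_e} suffices. So the remaining regime is $h$ and $\e$ comparable up to logarithmic factors. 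Nevertheless, I would aim for a single construction valid for all $\e,h$.

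The idea is a mid-surface map $\sigma\in W^{2,2}\cap W^{1,\infty}$ that is isometric to leading order away from a single fold, with the stretching concentrated in a boundary layer. We use \propref{prop:kirchhoff_love2}, which gives $E\le C\tEP(\sigma)+Ch^2\|d\sigma\|_4^4$. The last term is $O(h^2)$ when $|d\sigma|\le C$, which costs nothing.

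The construction proceeds as follows. The flat map $\iota$ has strain $\e/r$, from the mismatch $\frac{\e}{2\pi}e_1\otimes d\vp$, and this strain is curl-free. The idea is to absorb it by an out-of-plane displacement. The Burgers vector $\e e_1$ is a closure failure for circles around the origin. A single fold, i.e.\ a ridge along a ray (say the positive $y$-axis), can absorb a length mismatch: folding a strip by a small angle $\theta$ shortens horizontal projections by a factor of order $\theta^2$. Instead of a sharp crease, I would take $\sigma(x,y)=(x - u(x,y),\,y,\,w(x,y))$ in Cartesian coordinates on the cut plane. The height $w$ forms a smoothed tent across the $y$-axis of width $\ell(y)$. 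The in-plane correction $u$ is chosen so that the metric mismatch, namely the linearized strain $\mathrm{sym}(\nabla u)+\frac12\nabla w\otimes\nabla w$ minus the dislocation strain, is small away from the core. Matching gives $\int |\partial_x w|^2\,dx\approx \e$ across the fold at each height $y$. With slope $s$ over a width $\ell$, this means $s^2\ell\sim\e$. The bending cost per unit $y$ is $s^2/\ell\sim \e/\ell^2$, so the bending energy is $h^2\int \e/\ell(y)^2\,dy$. Taking $\ell$ of order one would give bending $\sim h^2\e$. The residual stretching, coming from the $x$-dependence and from Gauss-curvature incompatibility, is then estimated. Near the core ($r\lesssim\max(\e,h)$) we use the flat map, whose cost there is $O(\e^2)$ plus the interpolation cost.

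The main obstacle is to verify that the residual in-plane strain is $O(\e)$ in $L^2$ rather than $\e\sqrt{\log}$. That is, the fold must cancel the $\e/r$ tail globally and not only in the circulation. I would first try the linearized (von K\'arm\'an) ansatz, with $w$ solving $\det\nabla^2 w=$ the dislocation source (a dipole of Gauss curvature). We then pick $w$ piecewise affine with a single crease whose kink density reproduces the dipole. Finally we mollify at scale $\max(\e,h)$ and bound $\tEP$ by $C(\e^2+h^2)$, because the mollification error scales like the square of the mollification scale.
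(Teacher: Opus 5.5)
\textbf{Verdict: there is a genuine gap.} Your overall mechanism is the one in the paper's sketch of Conti's construction: a single fold along a ray perpendicular to the Burgers vector absorbs the excess length $\e$, and \propref{prop:kirchhoff_love2} transfers the 2D estimate to 3D. For dislocations its $\|d\sigma\|_4^4$ term even vanishes. However, the step you call ``the main obstacle'' is the whole proof. Your concrete choice there, a fold of width $\ell\sim 1$ attached to a flat core of size $\max(\e,h)$, fails.

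The fold must absorb the Burgers mismatch on \emph{every} loop around the core. A circle of radius $r$ only sees the fold profile within distance $\sim r$ of the ray, so it recovers a shortening of order $s^2 r\sim \e r/\ell$. If $\ell\gg r$, a mismatch of order $\e$ remains and must be carried by in-plane strain of size $\sim\e/r$. Summing over $\max(\e,h)<r<1$ gives stretching $\sim\e^2\log(1/\max(\e,h))$. That is exactly the logarithm of the flat configuration, and it is not $O(\e^2+h^2)$ in the regime $h\sim\e$ that your reductions leave open. The paper in fact excludes your heuristic outcome. Stretching $O(\e^2)$ with bending $O(h^2\e)$ would give $\inf\Edisl\lesssim\e^2+h^2\e$. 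For $h=\sqrt{\e}$ this is $O(\e^2)$, which contradicts the lower bound $\gtrsim\e^2\log(2+\e^{-1/2})$ of \thmref{thm:dislocation}.

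The missing idea is that the fold must emanate from the core with width $\sim\e$, with slopes of order one there, and widen outward with $\ell(r)\lesssim r$. In the paper's sketch this is built into the dissection. The fold lives in the quarter of the square annulus between the two diagonals, whose width at distance $r$ is $\sim r$, and the other three quarters stay exactly planar. The price is that a fold of varying width is not developable. Here is my own von K\'arm\'an-level heuristic for how to pay that price; the paper does not specify the width profile.

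Take your orientation and set $w=\sqrt{\e\ell(y)}\,\phi(x/\ell(y))$. One can choose the in-plane displacement so that the $xx$ and $xy$ strains vanish. This leaves a $yy$ strain of order $\e(\ell'^2/\ell+|\ell''|)$ on the strip, while the bending is of order $\int\e\,\ell^{-2}\,dy$. The three natural width profiles then behave as follows:
\begin{itemize}
\item Linear growth $\ell\sim r$ gives bending $O(1)$ but stretching $\e^2\log(1/\e)$.
\item Constant width, which the core forces to be $\lesssim\e$, gives no stretching away from the core but bending $\gtrsim 1/\e$.
\item A sublinear profile such as $\ell(r)=\e^{1-b}r^b$ with $b\in(1/2,1)$ gives stretching $O(\e^2)$ and bending $O(1)$.
\end{itemize}
The last case is what the ``fold of fixed profile of variable width and height'' has to achieve.

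Your fallback does not supply this. A piecewise-affine $w$ carries Gauss curvature only at its vertices, and a straight crease of constant kink carries none, so no single crease reproduces a dipole through a ``kink density''. Moreover, matching the Gauss curvature exactly would make $\sigma$ an isometric immersion to leading order. Such immersions have bending diverging like $\log(1/\e)$ \cite{Kup17,KMP25}, so this only recovers \thmref{thm:disloc_ub_h_ll_e}. The construction works precisely because it leaves a controlled incompatibility spread along the fold. Your claim that mollification errors scale like the square of the mollification scale is likewise unsupported.
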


For completeness, we give a sketch of the idea of the proof.
Hinging once again on \thmref{thm:3D2D}, it suffices to construct immersions $\sigma:\Sdisl\to\R^3$ of the mid-surface, for which the estimates
\[
\ES_{(\Sdisl,\Kdisl)}(\sigma) \le C \e^2
\Textand
\EB_{(\Sdisl,\Kdisl)}(\sigma) \le C.
\]
hold for some $C$ independent of $\e$.

The construction is easiest to present for a body with rectangular inner and outer boundaries, as depicted below. 
The application to the body manifold $(\Sdisl,\gdisl)$ is immediate by embedding it in such a rectangle.

\[
\btkz
	\draw (-2,-2) -- (2,-2) -- (2,2) -- (-2,2) -- cycle;
	\draw (-0.5,-0.5) -- (0.5,-0.5) -- (0.5,0.5) -- (-0.5,0.5) -- cycle;
	\tkzText(0,0.8){\footnotesize $2\e$};
	\tkzText(0,-0.8){\footnotesize $2\e$};
	\tkzText(-0.75,0){\footnotesize $\e$};
	\tkzText(0.8,0){\footnotesize $2\e$};
	\tkzText(0,2.3){\footnotesize $2$};
	\tkzText(2.3,0){\footnotesize $2$};
	\tkzText(0,-2.3){\footnotesize $2$};
	\tkzText(-2.5,0){\footnotesize  $2-\e$};
	
	\begin{scope}[xshift=7cm]
	\draw[fill=gray!10] (-2,-1.9) -- (2,-1.9) -- (0.5,-0.4) -- (-0.5,-0.4) -- (-0.5,0.4)  -- (0.5,0.4) -- (2,1.9) -- (-2,1.9) -- cycle;
	\draw[dashed](0.5,-0.4) -- (0.5,0.4);
	\tkzText(0,0.7){\footnotesize $2\e$};
	\tkzText(0,-0.7){\footnotesize $2\e$};
	\tkzText(-0.75,0){\footnotesize $\e$};
	\tkzText(0,2.2){\footnotesize $2$};
	\tkzText(0,-2.2){\footnotesize $2$};
	\tkzText(-2.5,0){\footnotesize $2-\e$};
	\tkzDefPoint(0.5,0.4){A};
	\tkzDefPoint(0.5,-0.4){B};
	\tkzDrawPoints(A,B);
	\begin{scope}[xshift=5]
	\draw[fill=gray!10] (2,2) -- (2,-2) -- (0.5,-0.5) -- (0.5,0.5) -- cycle;
	\tkzText(0.8,0){\footnotesize $2\e$};	
	\tkzText(2.3,0){\footnotesize $2$};	
	\end{scope}
	\end{scope}
\etkz
\]

The figure on the right depicts the dislocated surface: a geodesic rectangle, whose right side is longer than its left side by the dislocation's magnitude $\e$. The figure on the left depicts a dissection of the surface into two simply-connected domains. 
An immersion into $\R^3$ is sought such that three-quarters of the surface remain planar, whereas the remaining quarter,
which has an excess of length, forms a fold as depicted below:

\begin{center}
\includegraphics[height=1.5in]{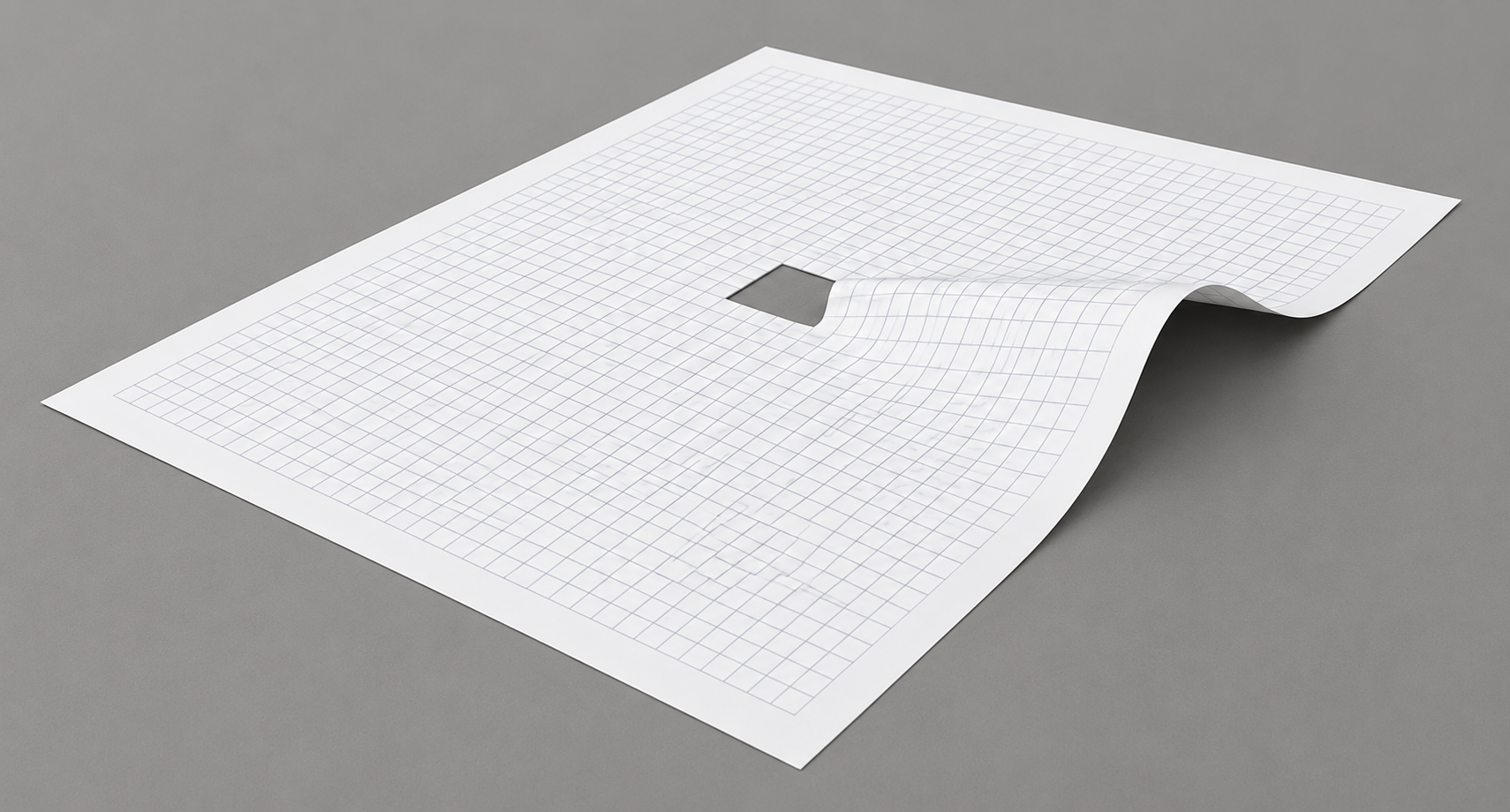}
\end{center}

The construction of \cite{Con23} shows that a fold having fixed profile of variable width and height can yield the desired scalings of stretching and bending energies.


\appendix
\section{Relating three-dimensional and two-dimensional models}
\label{sec:2D3D}

In this section we develop inequalities relating the 3D elastic models and their corresponding ``stretching plus bending'' 2D models. 

\subsection{``Stretching plus bending'' models}
Consider a thin elastic body $\M^h = \S\times (0,h)$ where $\S \subset \R^2$ is the mid-surface.\footnote{It is in fact the bottom-surface, but we refer to it as the mid-surface without redefining the domain with $z\in(-h/2,h/2)$. The only implication of considering a ``true'' mid-surface is changing below some numerical constants.}
We denote points in $\M^h$ by $x=(x',x_3)$, where $x'\in \S$.
Let $\P$ be a prestrain map $\P: T\M\to \R^3$ of the form
\beq\label{eq:PK}
\P_{(x',x_3)} = \mymat{\K_{x'} & 0 \\ 0 & 1},
\eeq
where $\K:T\S\to \R^2$.
This represents a non-Euclidean plate, as the second fundamental form of $\S$ in $\M^h$ (with respect to the  metric $\g$ induced by $\P$, see \eqref{eq:metric}), vanishes.
Denote by $\g$ the metric induced on $\S$ by $G$.

We now define two ``stretching plus bending'' energies related to $(\S,\K)$:

First, for an immersion $\sigma:\S\to \R^3$, whose oriented unit normal is $\frakn: \S\to \R^3$, we define
\beq
\EPlate_{(\S,\K)} (\sigma) =  \ES_{(\S,\K)} (\sigma) + h^2\, \EB_{\S}(\sigma),
\label{eq:EPlate}
\eeq
where 
\beq
\label{eq:ES}
\ES_{(\S,\K)} (\sigma) = \dashint_{\S} \dist^2(d\sigma\circ \K^{-1},O(2,3))\,\dVol{\g},
\eeq
and
\beq
\EB_{\S}(\sigma) = \frac13 \dashint_{\S} |d\frakn|_\g^2\,\dVol{\g}.
\label{eq:EB}
\eeq
The first term is called the \Emph{stretching energy}, whereas the second term is called the \Emph{bending energy}.
When the stretching vanishes, i.e., when $\sigma$ is an isometric immersion, then the $\g$ norm of the Weingarten map $d\frakn$ equals that of the second fundamental form (which is often used as an alternative definition of the bending energy);
in the presence of stretching, however, the Weingarten map is more convenient for analysis (see a detailed discussion in \cite{AKM22}).
This energy is well-defined whenever $\sigma$ is a 2-Sobolev immersion \cite{AKM22}, namely, when $\sigma\in W^{1,2}(\S;\R^3)$,  $d\sigma$ has rank two almost everywhere, so $\frakn$ is well defined, and $\frakn\in W^{1,2}(\S;\R^3)$.

Second, we define a modified energy, that is well-defined for any $\sigma\in W^{2,2}(\S;\R^3)$, even if it is not an immersion: 
\beq
\tEP_{(\S,\K)} (\sigma) =  \ \ES_{(\S,\K)} (\sigma) + h^2\, \tEB_{\S}(\sigma),
\label{eq:mEPlate}
\eeq
where 
\[
\tEB_{\S}(\sigma) = \frac13 \dashint_{\S} |\nabla^{\g} d\sigma|_{\g}^2\,\dVol{\g},
\]
and where $\nabla^{\g}$ is the covariant derivative with respect to the metric $\g$.
This is a geometric version of an energy in which the bending energy density is given by $|\nabla^2 \sigma|_\euc$, where $\nabla^2$ is the Hessian in coordinates (see, e.g., \cite{Olb17}).
The use of the geometric Hessian is important in order to obtain the precise relations below between this energy and the three-dimensional one.
Note that whenever $\sigma$ is an isometric immersion, i.e., when the stretching vanishes,
\[
\EPlate_{(\S,\K)} (\sigma) = \tEP_{(\S,\K)} (\sigma)
\]
(this statement is true only when considering the $\g$-Hessian).

The aim of this section is to prove the following relations:

\begin{theorem}
\label{thm:3D2D}
The following inequalities hold:
\beq
\inf_{W^{1,2}(\M^h;\R^3)} E_{(\M^h,\calP)} \le  
2\, \inf_{\textup{Imm}^2(\S;\R^3)} \EPlate_{(\S,\K)},
\label{eq:main2D3D}
\eeq
where $\textup{Imm}^2(\S;\R^3)$ is the space of 2-Sobolev immersions defined above, and
\[
\inf_{W^{1,2}(\M^h;\R^3)} E_{(\M^h,\calP)} \le
\inf_{\sigma\in W^{2,2}(\S;\R^3) \cap W^{1,\infty}(\S;\R^3)} 
\brk{\max\brk{68, 4 \|d\sigma\|_\infty^2} \, \tEP_{(\S,\K)}(\sigma) + Ch^2 \|d\sigma\|_4^4}
\]
for some $C\ge 0$ depending on $\g$, that vanishes if $\g$ admits non-trivial parallel vector fields (e.g., for bodies with dislocations).
Furthermore, if the Gaussian curvature of $\g$ vanishes, then we have the reverse inequality
\[
\inf_{W^{1,2}(\M^h;\R^3)} E_{(\M^h,\calP)} \ge c
\inf_{W^{2,2}(\S_{2h};\R^3)} \, \tEP_{(\S_{2h},\K)}.
\]
for some $c>0$ depending only on the domain $\S$, and $\S_{2h} = \{x'\in \S ~:~ \dist_{\g}(x', \partial \S)>2h\}$
is the mid-surface minus a $2h$-tubular neighborhood of the boundary.
\end{theorem}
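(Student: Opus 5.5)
We prove the three inequalities of \thmref{thm:3D2D} separately. The first two are upper bounds, obtained from explicit extensions of a mid-surface map. The third is a lower bound, obtained by slicing the 3D body and using the rigidity estimate \eqref{eq:rigidity}.

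\textbf{Upper bound via Kirchhoff--Love.} Given a 2-Sobolev immersion $\sigma$ with unit normal $\frakn$, we set
\[
f(x',x_3)=\sigma(x')+x_3\,\frakn(x').
\]
Then $df = (d\sigma + x_3\, d\frakn,\ \frakn)$, and in the frame given by $\P$ of \eqref{eq:PK},
\[
df\circ\P^{-1} = \big(d\sigma\circ\K^{-1} + x_3\, d\frakn\circ\K^{-1} \mid \frakn\big).
\]
We first take $A$ to be the matrix $(d\sigma\circ\K^{-1}\mid\frakn)$. Its distance to $\SO(3)$ is at most $\dist(d\sigma\circ\K^{-1},O(2,3))$: the nearest point in $O(2,3)$ has its image in the plane of $d\sigma$, so completing it with $\pm\frakn$ stays within that distance and the sign is fixed by orientation. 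By the triangle inequality,
\[
\dist^2(df\circ\P^{-1},\SO(3)) \le 2\,\dist^2(A,\SO(3)) + 2x_3^2\,|d\frakn|_\g^2 .
\]
Integrating over $x_3\in(0,h)$ and dividing by $h$, the factor $x_3^2$ averages to $h^2/3$. Since $\dVol{\P}=\dVol{\g}\,dx_3$, this gives $E\le 2\EPlate$, which is the first inequality.

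\textbf{Upper bound for non-immersions.} For $\sigma\in W^{2,2}\cap W^{1,\infty}$ there is no normal field, so we build a substitute director $b$. Let $\{E_1,E_2\}$ be a $\g$-orthonormal frame and set $b=\sigma_*E_1\times\sigma_*E_2$, extending by $f=\sigma+x_3 b$. Three things need checking. First, $|(d\sigma\circ\K^{-1}\mid b)-\SO(3)|$ is bounded by a constant times $\dist(d\sigma\circ\K^{-1},O(2,3))$, with a constant growing like $\|d\sigma\|_\infty$; this is where the $\max(68,4\|d\sigma\|_\infty^2)$ comes from. Second, $|db|\le 2|d\sigma|\,|\nabla^\g d\sigma| + |d\sigma|^2|\nabla E_i|$. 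The connection term of the frame contributes $Ch^2\|d\sigma\|_4^4$, and it vanishes when $\g$ admits a parallel frame, as for dislocations. Third, the constants are tracked explicitly.

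\textbf{Lower bound.} This part is the main obstacle. Assume $K_\g=0$ and let $f\in W^{1,2}(\M^h)$. We cover $\S_{2h}$ by $\g$-balls $B_i$ of radius $h$ with bounded overlap. Each cylinder $B_i\times(0,h)$ is flat and simply connected, so \eqref{eq:rigidity} applies with a uniform constant and gives rotations $Q_i$. Next we average $f$ across the thickness: take $\bar f(x')=\frac1h\int_0^h f\,dx_3$ and mollify at scale $h$ using the flat exponential charts, so that $\sigma := (\bar f)_h$ is a $W^{2,2}$ map. Following the FJM plate argument, this should give:
\begin{itemize}
\item $|d\sigma\circ\K^{-1} - Q_i|_{2\times3}|^2$ is controlled on $B_i$ by $h^{-1}\int_{B_i\times(0,h)}\dist^2$, which controls the stretching term;
\item $|\nabla^\g d\sigma|^2 \lesssim h^{-2}\sum_{\text{neighbors}}|Q_i-Q_j|^2 \lesssim h^{-3}\int \dist^2$, which controls the bending term.
\end{itemize}
Summing over $i$ yields $\tEP(\sigma)\le C\,E(f)$. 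The shrinking to $\S_{2h}$ leaves room for the mollification and for the balls.

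The delicate point is that the mollification must be done geometrically, so that $\nabla^\g$ rather than the coordinate Hessian is controlled. Flatness makes this possible: in local isometric charts $\nabla^\g$ is the ordinary Hessian, so the standard Euclidean argument applies chart by chart. The remaining care is that the constants stay uniform in $h$ across the cover.
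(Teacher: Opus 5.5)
Your proposal is sound in outline, with one intermediate step in the lower bound that needs replacing.

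\textbf{Where you match the paper, and where you differ.} The Kirchhoff--Love part is exactly \propref{prop:kirchhoff_love}. The lower bound follows the same strategy as \propref{prop:reduction}--\propref{prop:df-Q}: FJM rigidity on flat cylinders of size $h$, plus a mollification at scale $h$ with a kernel in the $\g$-distance, so that $\nabla^{\g}$ becomes the Euclidean Hessian in isometric charts. Your thickness average followed by a 2D mollifier is just a product version of the paper's half-ball kernel.

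The genuine difference is the director in the second part. The paper uses $\nu$, equal to $2b$ on $\{|b|<\frac12\}$ and to $\frakn$ elsewhere (the radial retraction of $2b$ onto the unit ball), together with \lemref{lem:cross_half}. On $\{|b|<\frac12\}$ that lemma gives $\dist(d\sigma\circ\K^{-1},O(2,3))\ge\sqrt2-1$, while elsewhere $\nu=Qe_1\times Qe_2$ exactly. This keeps the stretching coefficient absolute (the $68$) and confines $\|d\sigma\|_\infty$ to the bending term.

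Your raw cross product avoids that case analysis. Take $A=d\sigma\circ\K^{-1}$, any nearest $Q\in O(2,3)$, and $U=(Q\,|\,Qe_1\times Qe_2)\in\SO(3)$. The identity $Ae_1\times Ae_2-Qe_1\times Qe_2=(A-Q)e_1\times Ae_2+Qe_1\times(A-Q)e_2$ gives $|(A\,|\,b)-U|^2\le(2+|A|^2)\,|A-Q|^2$. The price is that $\|d\sigma\|_\infty$ now also enters the stretching coefficient, as $2(2+\|d\sigma\|_\infty^2)$; this still fits under $\max(68,4\|d\sigma\|_\infty^2)$.

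To reach the stated bending coefficient you need a sharper bound on $db$ than yours, and one is available. The vector $b$ does not depend on the choice of oriented orthonormal frame, so the connection terms cancel: $\nabla E_1=\omega\otimes E_2$ and $\nabla E_2=-\omega\otimes E_1$ contribute $\omega\,(\sigma_*E_2\times\sigma_*E_2-\sigma_*E_1\times\sigma_*E_1)=0$. Cauchy--Schwarz then gives $|db|_{\g}\le|d\sigma|_{\g}\,|\nabla^{\g} d\sigma|_{\g}$. This yields bending coefficient $2\|d\sigma\|_\infty^2$, and in fact $C=0$. With your factor $2$ in the bound on $db$, the coefficient would exceed $4\|d\sigma\|_\infty^2$.

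\textbf{The false step in the lower bound.} You claim $|\nabla^{\g} d\sigma|^2\lesssim h^{-2}\sum_{\text{neighbors}}|Q_i-Q_j|^2$, and this is false as written. Your $\sigma$ mollifies $\bar f$, not a piecewise-constant rotation field, so its Hessian is not controlled by the jumps of the $Q_i$ alone. If all $Q_i$ coincide, the right-hand side vanishes while $\nabla^{\g} d\sigma$ need not.

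The fix is the one used in \propref{prop:df-Q}. Since $\nabla\eta_h$ has zero mean, in a flat chart $\nabla d\sigma(x')=\int(d\bar f(y')-Q_i')\otimes\nabla\eta_h(x'-y')\,dy'$, where $Q_i'$ denotes the in-plane columns of $Q_i$. Let $Q_i$ be the rigidity rotation of the doubled cylinder $B_{2h}(x_i)\times(0,h)$. For $x'\in B_h(x_i)$, Cauchy--Schwarz and Jensen in $x_3$ give $|\nabla d\sigma(x')|^2\lesssim h^{-5}\int_{B_{2h}(x_i)\times(0,h)}|df-Q_i|^2\lesssim h^{-5}\int_{B_{2h}(x_i)\times(0,h)}\dist^2(df\circ\P^{-1},\SO(3))$. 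Integrating over $B_h(x_i)$ and summing gives $h^2\tEB_{\S_{2h}}(\sigma)\lesssim E_{(\M^h,\P)}(f)$.

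Your bullet (i) likewise needs the doubled cylinder on the right-hand side. With these corrections the argument goes through.
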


\subsection{Some linear-algebraic relations}

We begin by noting some elementary results in linear algebra that will be needed below.

\begin{lemma}
\label{lem:dicO23}
Let $A\in\R^2\otimes \R^3$. Then, 
\[
\dist^2(A,\O(2,3)) = |(A^TA)^{1/2}-I|^2 =  |A|^2 - 2 \brk{|A|^2 + 2|A(e_1)\times A(e_2)|}^{1/2} + 2
\]
\end{lemma}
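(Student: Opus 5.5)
The plan is to reduce everything to the singular value decomposition of $A$ and then express the result through invariants of $A^TA$.

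Here $A\in\R^2\otimes\R^3$ is a linear map $\R^2\to\R^3$ (a $3\times 2$ matrix), and $\O(2,3)$ is the set of linear isometric embeddings $\R^2\to\R^3$, i.e. matrices with $R^TR=I$. Write the polar decomposition $A=UP$, where $P=(A^TA)^{1/2}$ is symmetric positive semidefinite $2\times2$ and $U\in\O(2,3)$. When $A$ has rank $2$, $U=AP^{-1}$; in degenerate cases take any $U\in\O(2,3)$ extending the partial isometry.

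The first equality is the standard nearest-point property of the polar factor. For $R\in\O(2,3)$ we have
\[
|A-R|^2=|A|^2-2\tr(R^TA)+2,
\]
since $|R|^2=\tr(R^TR)=2$. So minimizing the distance amounts to maximizing $\tr(R^TUP)$. Diagonalize $P=V\,\mathrm{diag}(s_1,s_2)V^T$ with $s_i\ge0$. Then
\[
\tr(R^TUP)=\sum_i s_i\,\ip{Uv_i,Rv_i}\le s_1+s_2=\tr P,
\]
because $Uv_i$ and $Rv_i$ are unit vectors. Equality holds at $R=U$. Hence
\[
\dist^2(A,\O(2,3))=|A|^2-2\tr P+2=|P|^2-2\tr P+2=|P-I|^2,
\]
using $|A|^2=\tr(A^TA)=|P|^2$.

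For the second equality, it remains to compute $\tr P=s_1+s_2$. We have $(s_1+s_2)^2=s_1^2+s_2^2+2s_1s_2$, with $s_1^2+s_2^2=\tr(A^TA)=|A|^2$ and $s_1s_2=\sqrt{\det(A^TA)}$. By the Lagrange identity,
\[
\det(A^TA)=|Ae_1|^2|Ae_2|^2-\ip{Ae_1,Ae_2}^2=|A(e_1)\times A(e_2)|^2,
\]
so $s_1s_2=|A(e_1)\times A(e_2)|$. Therefore $\tr P=\brk{|A|^2+2|A(e_1)\times A(e_2)|}^{1/2}$, and substituting into the previous expression gives the claimed formula.

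The only delicate point is the maximization step when $A$ is rank-deficient. The bound $\tr(R^TUP)\le\tr P$ still holds for any choice of $U$, and equality is attained by extending $U$ arbitrarily on the kernel, so the argument needs no separate case analysis. Everything else is routine algebra.
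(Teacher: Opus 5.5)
Your proof is correct and takes essentially the same route as the paper. You use the polar decomposition for the first equality, and you additionally prove the nearest-point property of the polar factor, which the paper only invokes. You then expand $|P-I|^2$ and use $(\tr P)^2=\tr(A^TA)+2\sqrt{\det(A^TA)}$ together with $\det(A^TA)=|A(e_1)\times A(e_2)|^2$, exactly as the paper does.
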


\begin{proof}
The first identity follows from polar decomposition. 
Next, we have 
\[
\begin{aligned}
|(A^TA)^{1/2}-I|^2  &= \tr ((A^TA)^{1/2}-I)^2 \\
&= \tr A^TA - 2 \tr(A^TA)^{1/2} + 2 \\
&= |A|^2 - 2 \tr(A^TA)^{1/2} + 2.
\end{aligned}
\]
Since for a symmetric 2-by-2 matrix $G$,
\[
\tr G = \tr^2 G^{1/2} - 2(\det G)^{1/2},  
\]
it follows that
\[
\tr(A^TA)^{1/2} = \brk{|A|^2 + 2(\det(A^TA))^{1/2}}^{1/2}.
\]
Finally, 
\[
\det(A^TA) = |A(e_1)\times A(e_2)|^2.
\]
\end{proof}

\begin{lemma}
\label{lem:cross_half}
Let $A\in\R^2\otimes \R^3$. Then 
\[
\dist^2(A,\O(2,3)) \ge (|A| - \sqrt{2})^2.
\]
In particular, 
\[
|A| \ge 2\sqrt{2}
\quad \text{implies} \quad
|A| \le 2\, \dist(A,\O(2,3)).
\]
Furthermore,
\[
\dist(A,\O(2,3)) \ge 
\begin{cases}
2 (1 - |A(e_1)\times A(e_2)|^{1/2})^2 & |A(e_1)\times A(e_2)|\ge 1/4 \\
1-2|A(e_1)\times A(e_2)| 			& |A(e_1)\times A(e_2)|< 1/4.
\end{cases}
\]
In particular,
\[
|A(e_1)\times A(e_2)| < \frac12
\quad\text{implies} \quad
\dist(A,\O(2,3)) \ge \sqrt{2}-1.
\]
\end{lemma}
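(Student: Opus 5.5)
The plan is to reduce everything to the singular values of $A$. Write $\lambda_1,\lambda_2\ge 0$ for the singular values of $A$, i.e.\ the eigenvalues of $(A^TA)^{1/2}$. By the first identity in \lemref{lem:dicO23},
\[
\dist^2(A,\O(2,3))=(\lambda_1-1)^2+(\lambda_2-1)^2 .
\]
We also have $|A|^2=\lambda_1^2+\lambda_2^2$ and $s:=|A(e_1)\times A(e_2)|=\lambda_1\lambda_2$.

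\emph{First bound.} The quantity $\dist(A,\O(2,3))$ is the Euclidean distance in $\R^2$ between $(\lambda_1,\lambda_2)$ and $(1,1)$. The reverse triangle inequality therefore gives
\[
\dist(A,\O(2,3))\ge |(\lambda_1,\lambda_2)|-|(1,1)|=|A|-\sqrt2 .
\]
If $|A|\ge 2\sqrt2$, then $\sqrt2\le |A|/2$, so $\dist(A,\O(2,3))\ge |A|-\sqrt2\ge |A|/2$, which is the second claim.

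\emph{Cross-product bounds.} I will use the second expression of \lemref{lem:dicO23},
\[
\dist^2(A,\O(2,3))=F(u):=u-2(u+2s)^{1/2}+2,\qquad u=|A|^2 ,
\]
and minimize $F$ over $u$ with $s$ held fixed. The admissible range is $u\ge 2s$, since $\lambda_1^2+\lambda_2^2\ge 2\lambda_1\lambda_2$. The function $F$ is convex in $u$, with $F'(u)=1-(u+2s)^{-1/2}$, which vanishes at $u=1-2s$.

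If $s\le 1/4$, the critical point $u=1-2s$ is admissible, and the minimum value is $F(1-2s)=1-2s$. If $s\ge 1/4$, the critical point lies below the admissible range, so the minimum is attained at $u=2s$, where $F(2s)=2s-4\sqrt s+2=2(1-\sqrt s)^2$.

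This gives the stated lower bounds for $\dist^2(A,\O(2,3))$. The statement is phrased for $\dist$ itself, so one more step is needed. Both lower bounds are at most $1$: indeed $1-2s\le1$, and $2(1-\sqrt s)^2\le 1/2$ for $s\ge1/4$. If $\dist(A,\O(2,3))\ge1$, the bound holds trivially. Otherwise $\dist(A,\O(2,3))\ge \dist^2(A,\O(2,3))$, and the bound follows from the one on $\dist^2$.

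\emph{Final claim.} I will argue directly from the $\dist^2$ bounds. If $s<1/4$, then $\dist^2(A,\O(2,3))\ge 1-2s>1/2>(\sqrt2-1)^2$. If $1/4\le s<1/2$, then, since $2(1-\sqrt s)^2$ is decreasing in $s$,
\[
\dist^2(A,\O(2,3))\ge 2(1-\sqrt s)^2>2\bigl(1-1/\sqrt2\bigr)^2=(\sqrt2-1)^2 .
\]
In both cases $\dist(A,\O(2,3))\ge\sqrt2-1$.

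The only mildly delicate point is the constrained minimization in $u$, in particular checking that the critical point is admissible exactly when $s\le1/4$. The rest is elementary algebra.
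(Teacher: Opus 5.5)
Your core argument is the same as the paper's. You prove the first bound by the reverse triangle inequality in singular-value coordinates. You prove the cross-product bounds by minimizing $u\mapsto u-2(u+2s)^{1/2}+2$ over $u\ge 2s$, with the critical point $u=1-2s$ admissible exactly when $s\le 1/4$. Your derivation of the last claim directly from the $\dist^2$ bounds is correct, and it is the right way to do it: from the unsquared bound $2(1-\sqrt s)^2$ you would only get $(\sqrt2-1)^2$ on $1/4\le s<1/2$. One cosmetic point: to square $\dist\ge |A|-\sqrt2$ you need the two-sided form $\dist\ge\bigl||A|-\sqrt2\bigr|$, since $|A|-\sqrt2$ can be negative.

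The genuine error is the paragraph converting the $\dist^2$ bounds into bounds on $\dist$. The claim that $2(1-\sqrt s)^2\le 1/2$ for all $s\ge 1/4$ is false: it holds only for $1/4\le s\le 9/4$, and $2(1-\sqrt s)^2\sim 2s$ as $s\to\infty$. This cannot be patched, because the unsquared inequality is itself false for large $s$. Take $A$ with columns $2e_1,2e_2$. Then $s=4$ and $(A^TA)^{1/2}=2I$, so $\dist(A,\O(2,3))=\sqrt2$, whereas $2(1-\sqrt s)^2=2$. More generally, the minimum of $\dist$ over $A$ with given $s\ge1/4$ is $\sqrt2\,|1-\sqrt s|$, so the unsquared version holds only for $s\le(1+1/\sqrt2)^2$. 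The displayed ``Furthermore'' inequality in the statement is a typo for a lower bound on $\dist^2(A,\O(2,3))$. That is what your minimization and the paper's proof actually establish, and it is the form you correctly use for the final claim. So delete the conversion paragraph and state the middle inequality for $\dist^2$.
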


\begin{proof}
The first inequality follows from the reverse triangle inequality. As for the second, we note that
\[
|\v\times \w| \le |\v| |\w| \le \frac12(|\v|^2 + |\w|^2), 
\]
i.e.,
\[
|A|^2 = |A(e_1)|^2 + |A(e_2)|^2 \ge 2|A(e_1)\times A(e_2)| \equiv c.
\]
By the second identity of previous lemma, it follows that
\[
\dist^2(A,\O(2,3)) \ge \min_{x\ge c} (x - 2 \brk{x + c}^{1/2}) + 2,
\]
which attains its minimum at $x = c$ if $c\ge 1/2$ and at $x=1-c$ if $c<1/2$, which yields the desired inequality.
\end{proof}

\subsection{Bounding the 3D model from above}
In this section we establish the first two inequalities in \thmref{thm:3D2D}.

\begin{proposition}[Kirchhoff-Love estimate for immersions]
\label{prop:kirchhoff_love}
Let $\sigma:\S\to\R^3$ be a 2-Sobolev immersion.
Then, the extension
\beq
f(x',x_3) = \sigma(x') + x_3\,\frakn(x')
\label{eq:KL_extension}
\eeq
of $\sigma$ satisfies
\beq
E_{(\M^h,\calP)}(f) \le 2\, \EPlate_{(\S,\K)}(\sigma).
\label{eq:kirchhoff_love}
\eeq
\end{proposition}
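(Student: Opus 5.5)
The plan is a pointwise estimate at each $(x',x_3)$, followed by integration in $x_3$ and comparison of volume forms. First compute the differential of the extension \eqref{eq:KL_extension}:
\[
df_{(x',x_3)} = \mymat{d\sigma_{x'} + x_3\, d\frakn_{x'} & \frakn(x')},
\]
so that, by \eqref{eq:PK},
\[
df\circ\calP^{-1} = \mymat{(d\sigma + x_3\,d\frakn)\circ\K^{-1} & \frakn}.
\]
Write $A := d\sigma\circ\K^{-1}\in\R^2\otimes\R^3$ and $B := d\frakn\circ\K^{-1}$. Let $A = U S$ be the polar decomposition, with $S=(A^TA)^{1/2}$ and $U\in\O(2,3)$ having image $T_{\sigma}=\frakn^\perp$, which is the span of $d\sigma$. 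Choose the orientation of $U$ so that the $3\times 3$ matrix $R := \mymat{U & \frakn}$ lies in $\SO(3)$; this is possible because $\frakn$ is the oriented normal and $\det A>0$ (since $\sigma$ is an immersion compatible with the orientation).

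Using this $R$ as the competitor in the distance,
\[
\dist(df\circ\calP^{-1},\SO(3)) \le |df\circ\calP^{-1} - R| = |A + x_3 B - U| \le |A-U| + x_3|B|.
\]
Lemma~\ref{lem:dicO23} gives $|A-U| = |S-I| = \dist(A,\O(2,3))$. Then $(a+b)^2\le 2a^2+2b^2$ yields the pointwise bound
\[
\dist^2(df\circ\calP^{-1},\SO(3)) \le 2\dist^2(d\sigma\circ\K^{-1},\O(2,3)) + 2x_3^2\,|d\frakn|_\g^2,
\]
since $|B|_{\euc} = |d\frakn|_\g$ (as $\K$ is an isometry $(T\S,\g)\to\R^2$).

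Next integrate. By \eqref{eq:PK}, $\dVol{\calP} = \dVol{\g}\,dx_3$ and $\Vol(\M^h) = h\Vol_\g(\S)$. Using $\dashint_0^h x_3^2\,dx_3 = h^2/3$, the averaged energy is bounded by $2\ES + 2h^2\cdot\frac13\dashint_\S|d\frakn|_\g^2 = 2\EPlate_{(\S,\K)}(\sigma)$, which is \eqref{eq:kirchhoff_love}. The constant $1/3$ in \eqref{eq:EB} is exactly what makes the factor come out as $2$.

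The main obstacle is regularity. For a 2-Sobolev immersion we only know $\frakn\in W^{1,2}$ and $\sigma\in W^{1,2}$, so we must check that $f\in W^{1,2}(\M^h)$ and that the chain rule above holds a.e. This follows since $f$ is a sum of products of $W^{1,2}$ functions of $x'$ with the smooth function $x_3$, bounded in $x_3$ on $(0,h)$. The orientation choice of $R$ also needs care on the full-measure set where $\operatorname{rank} d\sigma=2$; there the construction above applies pointwise, and measurability of $U$ follows from the polar decomposition formula.
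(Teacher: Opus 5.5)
Your proposal is correct and follows the paper's proof essentially verbatim: the paper also takes the $\O(2,3)$-projection $A(A^TA)^{-1/2}$ of $d\sigma\circ\K^{-1}$ and completes it with $\frakn$ to a rotation. It then bounds the distance by the triangle inequality and squares and integrates, exactly as you do. One wording fix: the polar factor is uniquely determined, so there is no orientation of $U$ to choose. What is actually used, tacitly in the paper as well, is that $\frakn$ is the normal oriented compatibly with $\K$.
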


\begin{proof}
Fix a point $x\in \M^h$.
Let $Q\in \O(2,3)$ be the projection of $A = d\sigma\circ \K^{-1}$ onto $\O(2,3)$.
Since $d\sigma$ is of full rank, $Q = A(A^TA)^{-1/2}$,  and thus $Q$ and $d\sigma$ have the same image. 
In particular, the image of $Q$ is perpendicular to $\frakn$, and thus $U = (Q | \frakn ) \in \SO(3)$.

Using the fact that $df = (d\sigma + x_3 d\frakn ~|~ \frakn)$,
\[
\begin{split}
\dist(df\circ \P^{-1},\SO(3)) &\le |df\circ \P^{-1} - U|_\euc  \\
&\le |d\sigma\circ \K^{-1} - Q|_\euc  + |x_3| |d\frakn\circ \K^{-1}|_\euc  \\
&=  \dist(d\sigma\circ \K^{-1},\O(2,3)) + x_3\,|d\frakn|_{\g}.
\end{split}
\]
The result now follows by squaring and integrating. 
\end{proof}

The following is an adaptation of \cite[Lemma~5.1]{CM08}, which does not require the surface to be an immersion:

\begin{proposition}[Kirchhoff-Love estimate for $W^{2,2}\cap W^{1,\infty}$-functions]
\label{prop:kirchhoff_love2}
Let $\sigma\in W^{2,2}(\S;\R^3) \cap W^{1,\infty}(\S;\R^3).$
Then, there exists an extension $f\in W^{1,2}(\M^h;\R^3)$ of $\sigma$ such that
\beq
E_{(\M^h,\P)}(f) \le \max\brk{68, 4 \|d\sigma\|_\infty^2} \, \tEP_{(\S,\K)}(\sigma) + Ch^2 \|d\sigma\|_4^4,
\label{eq:KL2}
\eeq
for some $C\ge0$ depending only on $\g$, which can be taken to be zero if $\g$ admits non-trivial parallel vector fields.
Furthermore, the $\|d\sigma\|_\infty$ factor only affects the bending term.
\end{proposition}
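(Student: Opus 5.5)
We follow the Conti--Müller construction of \cite[Lemma~5.1]{CM08}: extend $\sigma$ linearly in $x_3$ by a director field $b$ that approximates a unit normal wherever $d\sigma$ is far from degenerate, and otherwise is any $W^{1,2}$ field compatible with a reasonable bound. Set $f(x',x_3)=\sigma(x')+x_3\, b(x')$. Then $df\circ\P^{-1}=(d\sigma\circ\K^{-1}+x_3\,db\circ\K^{-1}\,|\,b)$. The director $b:\S\to\R^3$ will satisfy three requirements:
\begin{itemize}
\item[(i)] $|b|\le 1$ pointwise;
\item[(ii)] $b\in W^{1,2}$ with $|db|_\g\le C|\nabla^\g d\sigma|_\g$ up to a lower-order term;
\item[(iii)] $(A\,|\,b)$ is close to $\SO(3)$ whenever $A=d\sigma\circ\K^{-1}$ is close to $\O(2,3)$.
\end{itemize}

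For the construction of $b$, take $A(e_1)\times A(e_2)$, where the frame is $\K^{-1}$ of the standard frame (a $\g$-orthonormal frame $E_1,E_2$). Define $b=\Pi(d\sigma(E_1)\times d\sigma(E_2))$, where $\Pi:\R^3\to\R^3$ is a fixed Lipschitz map equal to $v/|v|$ for $|v|\ge 1/2$ and with $|\Pi|\le 1$. Then $b\in W^{1,2}\cap L^\infty$.

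Differentiating gives $db=D\Pi\cdot\big(\nabla(d\sigma E_1)\times d\sigma E_2+d\sigma E_1\times\nabla(d\sigma E_2)\big)$, with $|D\Pi|\le C$. Write $\nabla(d\sigma(E_i))=(\nabla^\g d\sigma)(E_i)+d\sigma(\nabla^\g E_i)$. The first piece is bounded by $C\|d\sigma\|_\infty|\nabla^\g d\sigma|$. The second piece involves the connection coefficients of the frame, which gives a term $\lesssim |d\sigma|^2$ pointwise. Squaring, this term contributes $Ch^2\|d\sigma\|_4^4$ after multiplication by $x_3^2\le h^2$. If $\g$ admits a parallel orthonormal frame (as for dislocations, where $\K$ is curl-free with a globally defined flat structure), then $\nabla^\g E_i=0$ and $C=0$. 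This is why the $\|d\sigma\|_\infty$ factor appears only in the bending term.

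For the pointwise stretching estimate, split into two cases according to the size of $|A(E_1)\times A(E_2)|$.

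\textbf{Case $|A(E_1)\times A(E_2)|\ge 1/2$.} Here $b$ is the unit normal and, as in \propref{prop:kirchhoff_love}, $(Q\,|\,b)\in\SO(3)$ with $Q$ the projection of $A$ onto $\O(2,3)$. Hence $\dist(df\circ\P^{-1},\SO(3))\le\dist(A,\O(2,3))+x_3|db|_\g$.

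\textbf{Case $|A(E_1)\times A(E_2)|<1/2$.} Here \lemref{lem:cross_half} gives $\dist(A,\O(2,3))\ge\sqrt2-1$. Since $|b|\le1$, we have $\dist(df\circ\P^{-1},\SO(3))\le |A|+1+\sqrt3+x_3|db|$. We then need $|A|+C\le C'\dist(A,\O(2,3))$. If $|A|\ge2\sqrt2$ this follows from the first part of \lemref{lem:cross_half}. Otherwise the left side is bounded by an absolute constant, while the right side is at least $\sqrt2-1$.

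Tracking these constants produces the factor $68$: with $(a+b)^2\le2a^2+2b^2$, a constant of the form $2\cdot(\text{ratio})^2$ arises. The bending term picks up $4\|d\sigma\|_\infty^2$ from the product rule. Squaring, integrating over $x_3\in(0,h)$ (where $\dashint x_3^2\le h^2/3$), and averaging yields \eqref{eq:KL2}.

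The main obstacle is keeping the bending part free of any factor that is not $\|d\sigma\|_\infty^2|\nabla^\g d\sigma|^2$ or $|d\sigma|^4$. This requires that $\Pi$ be chosen with $|D\Pi|$ bounded by an absolute constant, and that the frame-derivative term be handled geometrically rather than in coordinates.
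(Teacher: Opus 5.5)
Your proposal is essentially the paper's proof. The paper's director $\nu$ is your $b$ with the choice $\Pi(v)=2v$ for $|v|<\tfrac12$; this $\Pi$ has Lipschitz constant $2$, which is the source of the $4$ in $|d\nu|_\g\le 4|d\sigma|_\g(|\nabla^\g d\sigma|_\g+C|d\sigma|_\g)$. The rest also matches: the extension $\sigma+x_3\nu$, the case split at $|d\sigma(v_1)\times d\sigma(v_2)|=\tfrac12$ via \lemref{lem:cross_half}, and the frame-derivative error term that vanishes for a parallel frame.

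One claim is wrong as written: your bookkeeping does not produce the constant $68$. In your second case you compare $df\circ\P^{-1}$ with an arbitrary rotation and get the bound $|A|+1+\sqrt3$. For $|A|<2\sqrt2$ this only gives the ratio $(2\sqrt2+1+\sqrt3)/(\sqrt2-1)\approx 13.4$, so the stretching prefactor comes out around $2\cdot 13.4^2\approx 360$.

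The paper instead uses, in both cases, the rotation $U=(Q\,|\,Q(v_1)\times Q(v_2))$, where $Q$ is a nearest point of $A=d\sigma\circ\K^{-1}$ in $\O(2,3)$. This gives
\[
|df\circ\P^{-1}-U|\le \dist(A,\O(2,3))+|x_3|\,|d\nu|_\g+|\nu-Q(v_1)\times Q(v_2)|.
\]
The last term vanishes in the first case. In the second case it is at most $2\le \frac{2}{\sqrt2-1}\dist(A,\O(2,3))$. Either way the stretching factor is $1+\frac{2}{\sqrt2-1}=3+2\sqrt2$, and $2(3+2\sqrt2)^2=34+24\sqrt2<68$.

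This choice also makes your sub-split at $|A|=2\sqrt2$ unnecessary. With that one replacement your argument coincides with the paper's.
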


\begin{proof}
The proof is similar to that of \propref{prop:kirchhoff_love}, except that $\frakn$ may not even be defined, so we will define an alternative field $\nu\in W^{1,2}(\W;\R^3)\cap L^\infty(\W;\R^3)$ as follows:
Fix a frame $\{v_1,v_2\}$ of $T\S$ which is orthonormal with respect to $\g$, and define
\[
\nu = \Cases{
2\,(d\sigma(v_1) \times d\sigma(v_2))
&|d\sigma(v_1) \times d\sigma(v_2)| < \tfrac12 \\
\displaystyle \frakn
&|d\sigma(v_1) \times d\sigma(v_2)| \ge \tfrac12.
}
\]
By definition, $|\nu| \le 1$. 
Since 
\[
d(d\sigma(v_i)) = (\nabla^{\g} d\sigma)(v_i) + d\sigma(\nabla^{\g}v_i),
\]
and since $|v_i|_{\g} = 1$, 
\[
|d\nu|_{\g} \le 4\,|d\sigma|_{\g}\, \brk{|\nabla^{\g}d\sigma|_{\g} + C|d\sigma|_{\g}},
\]
where $C$ depends only on the norm of $\nabla^{\g}v_i$. 
In particular, if $\g$ admits non-trivial parallel vector field, we could choose $\{v_1,v_2\}$ to be parallel and then $\nabla^{\g}v_i=0$ and the error term vanishes. 

Now let $f(x) = \sigma(x') + x_3 \nu(x')$.
Define $Q\in\O(2,3)$ as in \propref{prop:kirchhoff_love}, i.e., as the projection of $d\sigma\circ \K^{-1}$ onto $\O(2,3)$, and 
\[
U_x = \Big(Q_{x'}~|~Q_{x'}(v_1)\times Q_{x'}(v_2)\Big).
\]
Then, $U\in\SO(3)$, and
\[
\begin{aligned}
\dist(df\circ \P^{-1},\SO(3))
&\le | df\circ \P^{-1} - U| \\
&\le \dist(d\sigma\circ \K^{-1},\O(2,3)) + |x_3|\,|d\nu\circ \K^{-1} |_\euc + |\nu - Q(v_1)\times Q(v_2)|_\euc\\
&= \dist(d\sigma\circ \K^{-1},\O(2,3)) + |x_3|\,|d\nu|_{\g} + |\nu - Q(v_1)\times Q(v_2)|_\euc,
\end{aligned}
\]
where in the passage to the third line we used the fact that $\K:T\S\to\R^2$ is a vector bundle isometry.
Since $\{v_1,v_2\}$ is a $\g$-orthonormal frame, and $\g$ is induced by $\K$,the frame $\{\K v_1,\K v_2\}$ is orthonormal  in $\R^2$.
Thus, by \lemref{lem:cross_half}, if
\[
|d\sigma(v_1)\times d\sigma(v_2)| < \frac12,
\]
then
\[
\dist(d\sigma\circ \K^{-1},\O(2,3)) \ge \sqrt{2}-1.
\]
In this case,
\[
|\nu - Q(v_1)\times Q(v_2)| \le |\nu| + 1 \le \frac{2}{\sqrt{2} -1}\,\dist(d\sigma\circ \K^{-1},\O(2,3)),
\] 
whereas if $|d\sigma(v_1) \times d\sigma(v_2)| \ge \tfrac12$, then
\[
\begin{split}
|\nu - Q(v_1)\times Q(v_2)| &= |\frakn - Q(v_1)\times Q(v_2)| = 0.
\end{split}
\]
We thus have
\[
\dist(df\circ \P^{-1},\SO(3)) \le \frac{\sqrt{2}+1}{\sqrt{2}-1}\dist(d\sigma\circ K^{-1},\O(2,3)) + 4|x_3| \brk{|d\sigma|_{\g}\, |\nabla^{\g}d\sigma|_{\g} + C|d\sigma|_{\g}^2},
\]
Squaring and integrating we obtain the desired result. 
\end{proof}

\subsection{Bounding the 3D model from below}
We now prove the third inequality in \thmref{thm:3D2D}, which immediately follows from the following result:

\begin{theorem}[``Reverse" Kirchhoff-Love estimate]
\label{thm:reverse_KL}
Assume that $\K$ induces a metric with zero Gaussian curvature.
Then, for every $f\in W^{1,2}(\M^h;\R^3)$ there exists a map $\sigma\in W^{2,2}(\S_h;\R^3)$, such that
\[
\tEP_{(\S_{2h},\K)}(\sigma) \le C E_{(\M^h,\calP)}(f)
\]
for some absolute constant $C>0$.
Here, $\S_{2h} = \{x'\in \S ~:~ \dist_{\g}(x', \partial \S)>2h\}$. 
\end{theorem}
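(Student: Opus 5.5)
The plan is the standard Friesecke--James--M\"uller route: local geometric rigidity on cubes of side $h$, a piecewise-constant rotation field, a mollified version of it, and then the surface map $\sigma$. Flatness of $\g$ and the product structure \eqref{eq:PK} of $\calP$ make $\calP$ locally exact, so there are no error terms.

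\textbf{Step 1: local rigidity.} Since $\g$ has zero Gaussian curvature, every point of $\S_h$ has a neighbourhood on which $\K = d\psi$ for a local isometry $\psi$ into $\R^2$. Hence $\calP = d(\psi,x_3)$ on the corresponding cylinder, exactly as in \propref{prop:FJMdisloc}.

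Cover $\S_{2h}$ by $\g$-geodesic squares $S_a$ of side $h$ with bounded overlap. Consider the enlarged cubes $\hat S_a\times(0,h)$, of side comparable to $h$ and contained in $\S_h\times(0,h)$. These are isometric via $(\psi,x_3)$ to Euclidean cubes, uniformly in $a$ and $h$. The FJM theorem then gives rotations $R_a$ with
\[
\int_{\hat S_a\times(0,h)}|df-R_a\calP|^2 \le C\int_{\hat S_a\times(0,h)}\dist^2(df\circ\calP^{-1},\SO(3)).
\]
For neighbouring squares, comparing the two estimates on the overlap gives
\[
|R_a-R_b|^2\le Ch^{-3}\times(\text{local energy on the union}).
\]

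\textbf{Step 2: interpolation and construction of $\sigma$.} Build a smooth field $\tilde R:\S_{2h}\to\R^{3\times3}$ with a partition of unity subordinate to the $S_a$ and gradients of order $1/h$. Then
\[
\int |\tilde R-R_a|^2\lesssim h^{-1}E_{\text{loc}},\qquad \int|\nabla\tilde R|^2\lesssim h^{-3}E_{\text{loc}}.
\]
Here, and below, gradients and Hessians are the $\g$-covariant ones, computed in the local flat coordinates $\psi$.

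Define $\sigma$ as the $x_3$-average of $f$, and $\bar F=\dashint_0^h df'\,dx_3$, the averaged in-plane gradient, with $df'\circ\K^{-1}$ the in-plane columns of $df\circ\calP^{-1}$. Then $d\sigma=\bar F$.

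The stretching term $\dist^2(d\sigma\circ\K^{-1},\O(2,3))$ is bounded by $|\bar F\circ\K^{-1}-\tilde R'|^2$, where $\tilde R'$ is the first two columns of $\tilde R$ projected to $\SO(3)$. Jensen plus Step 1 bound this by $C\,E(f)$ after normalizing by the volume $\sim h|\S|$.

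\textbf{Step 3: the bending term (main obstacle).} We need
\[
h^2\dashint|\nabla^{\g}d\sigma|^2\lesssim E.
\]
The naive bound fails: $\nabla\tilde R$ controls an approximation of $d\sigma$, not $d\sigma$ itself. So I would not take $\sigma$ to be the raw average. Instead I would take a regularization $\sigma_h$, obtained by mollifying the average at scale $h$ in flat local coordinates, with a fixed partition-of-unity gluing.

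On each square, $\nabla^2\sigma_h = (\bar F-\tilde R')*\nabla\eta_h+\tilde R'*\eta_h$-type terms. The Hessian is then bounded by
\[
h^{-1}\|\bar F-R_a\|_{L^2}+\|\nabla\tilde R\|_{L^2},
\]
both of order $h^{-3/2}E_{\text{loc}}^{1/2}$. Multiplying by $h^2$ and normalizing by $h$ gives
\[
h^2\dashint|\nabla^2\sigma_h|^2\lesssim E.
\]

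Mollification preserves the stretching bound, since
\[
|d\sigma_h-\tilde R'|\le|(\bar F-\tilde R')*\eta_h|+|\tilde R'*\eta_h-\tilde R'|,
\]
and both terms are controlled as above. The delicate points are the gluing between charts, where flatness makes the transition maps Euclidean isometries so commutators vanish, and the loss of the $2h$ boundary collar needed for the mollifier and the enlarged cubes. This explains the domain $\S_{2h}$ in the statement.
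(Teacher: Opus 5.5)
Your argument is correct and is essentially the paper's proof. In both, $\sigma$ is an intrinsic mollification of $f$ at scale $h$ in flat local charts, which is well defined because radial kernels commute with the Euclidean transition maps. In both, the Hessian and stretching bounds come from moving the derivative onto the kernel, subtracting a constant FJM rotation on a cylinder of size $\sim h$, and applying Cauchy--Schwarz.

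The differences are minor. The paper mollifies $f$ directly over half-balls centred on $\{x_3=0\}$ and uses the mollified gradient $Q$, with $d\sigma=Q|_{T\S}$, as its smooth approximate rotation field. So the grid of rotations $R_a$, the neighbour comparisons and the interpolated field $\tilde R$ in your Steps 1--2 are not needed; your own constant-$R_a$ Hessian estimate already suffices. When bounding $\dist(\cdot,\O(2,3))$, you should also compare with the constant $R_a$ rather than with a projection of $\tilde R$.
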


This result, as well as the technique, is similar to other results in the literature; see for example \cite[Proof of Theorem 2]{Olb17}.
However we could not pinpoint a specific place in the literature in which it is stated explicitly.
Moreover, the proof given here shows the role of the flatness assumption on the underlying metric in a more vivid way, and so it might be helpful for relaxing this assumption in future work.

We prove \thmref{thm:reverse_KL} in steps, starting with a few lemmas.
In the following, $d(\cdot,\cdot)$ denotes the distance in $\M^h$ with respect to the metric $G$ induced by $\calP$; note that $G$ is also locally-Euclidean.
Furthermore, for two sufficiently close points $x,y\in \M^h$, we define $\Pi_x^y:T_x\M^h\to T_y\M^h$ as the parallel transport of the metric $G$ along the unique shortest geodesic from $x$ to $y$.
Since we assumed $G$ to be smooth, this is well-defined even if $\calP$ is discontinuous. 

\begin{proposition}
\label{prop:reduction}
Let  $f\in W^{1,2}(\M^h;\R^3)$, and let $\eta:\R_+\to[0,1]$ be a smooth function, compactly-supported in $[0,1)$ such that for \emph{some} point $x'_0\in \S_h$,
\[
\frac{1}{h^3} \int_{\M^h} \eta\brk{\frac{d((x'_0,0),y)}{h}}\, \dVol{\calP}(y) = 1.
\]
Define $\sigma:\S_h\to\R^3$ by
\[
\sigma(x') = \frac{1}{h^3} \int_{\M^h} f(y)\, \eta\brk{\frac{d((x',0),y)}{h}}\, \dVol{\calP}(y),
\]
and $Q\in\Gamma(T^*\M^h|_{\S_h}\otimes\R^3)$ by
\[
Q_{x'}(v) = \frac{1}{h^3} \int_{\M^h} df_{y}\circ \Pi_{(x',0)}^y(v) \, \eta\brk{\frac{d((x',0),y)}{h}}\, \dVol{\calP}(y),
\]
for $v\in T^*_{(x',0)}\M^h$.
Then,
\[
d\sigma = Q|_{T\S_h},
\]
and
\[
|\nabla^{\g} d\sigma|_{\g} \le |\nabla^\g Q|_\g.
\]
\end{proposition}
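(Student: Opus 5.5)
The plan is to exploit the local flatness of $G$. Fix $x'\in\S_h$. The ball $B_h((x',0))$ in $(\M^h,G)$ is the support of the integrand. Since $\calP$ is curl-free, on a simply-connected neighbourhood of this ball there is an isometric immersion $\Psi$ with $d\Psi=\calP$. For nearby $x'$ (within distance small compared with the injectivity scale) the ball is mapped isometrically onto a Euclidean ball, intersected with the image slab. In these flat coordinates $\Psi$, the distance is $d(x,y)=|\Psi(x)-\Psi(y)|$. Parallel transport $\Pi_x^y$ becomes the identity on $\R^3$: in the frame induced by $\Psi$, $\Pi_x^y(v)=d\Psi_y^{-1}d\Psi_x v$. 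The volume form $\dVol{\calP}$ becomes Lebesgue measure. Thus, writing $\tilde f=f\circ\Psi^{-1}$ and $p=\Psi(x',0)$, I will express
\[
\sigma(x')=\frac1{h^3}\int \tilde f(w)\,\eta\brk{\tfrac{|p-w|}{h}}\,dw,\qquad
Q_{x'}(v)=\frac1{h^3}\int d\tilde f_w(d\Psi v)\,\eta\brk{\tfrac{|p-w|}{h}}\,dw .
\]
This is a Euclidean mollification restricted to the slab. The integration region $\Psi(\M^h)$ is fixed and independent of $x'$. The ball may hit the top and bottom faces $z=0,h$, since $x'\in\S_h$ only keeps it away from the lateral boundary, so the mollification is over a half-ball-like region. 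But the domain of integration does not move with $p$, so differentiation under the integral is still fine.

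\textbf{Step 1 ($d\sigma=Q|_{T\S_h}$).} Differentiate $\sigma$ in $x'$ in direction $v\in T_{x'}\S$. The derivative falls on the kernel: $d\Psi(v)\cdot\nabla_p\eta(|p-w|/h)=-d\Psi(v)\cdot\nabla_w\eta(|p-w|/h)$. Then integrate by parts in $w$ to move the derivative onto $\tilde f$. The boundary terms on the slab faces $\{z=0\}$ and $\{z=h\}$ are the delicate point. Because the plate structure \eqref{eq:PK} makes $\Psi$ map $z$ to the third Euclidean coordinate, a tangential $v$ has $d\Psi(v)$ parallel to the faces. The normal component of $d\Psi(v)\,\tilde f\,\eta$ on the faces therefore vanishes, and the boundary terms drop. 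This gives exactly $Q_{x'}(v)$, using the transport formula above; weak derivatives suffice since $\tilde f\in W^{1,2}$.

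\textbf{Step 2 ($|\nabla^{\g} d\sigma|_{\g}\le|\nabla^\g Q|_\g$).} In flat coordinates, Christoffel symbols vanish and $\S$ maps to a flat piece of the plane $\{z=0\}$. Both $\nabla^\g d\sigma$ and $\nabla^\g Q$ are then plain coordinate derivatives. $\nabla^{\g}d\sigma$ is the restriction of $\nabla^\g Q$ to $T\S\otimes T\S$: differentiate the identity $d\sigma=Q|_{T\S}$ along $\S$, using that $\S$ is totally geodesic in $\M^h$, as noted after \eqref{eq:PK}. Restriction of a tensor to a subspace does not increase the norm, which gives the inequality.

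\textbf{Main obstacle.} I expect the main difficulty to be rigor around discontinuities of $\calP$ and the choice of the local developing map $\Psi$. The kernel's support must lie in a simply-connected region where $\Psi$ is injective, for instance in the disclination and dislocation cases away from the singular core. The expressions also need to be independent of the chosen branch, which holds since $G$, $d$ and $\Pi$ are intrinsic. I would handle this by working locally near each $x'$ and noting that all quantities are defined invariantly.
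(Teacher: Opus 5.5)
Your proposal is correct and follows essentially the paper's route. In local Euclidean coordinates for the flat metric $G$, parallel transport is the identity and $\sigma$, $Q$ become Euclidean mollifications, so $d\sigma=Q|_{T\S}$ follows by commuting horizontal derivatives with the mollification; the Hessian inequality holds because $\nabla^{\g}d\sigma$ is the in-plane/in-plane block of $\nabla^{\g}Q$, since $\S$ is totally geodesic. Two of your points are more careful than the paper's write-up, which simply integrates over $\R^3$: your integration by parts on the face $\{z=0\}$ (the only face the support reaches, since the centre lies on it), and your remark that one should use a local isometry of the smooth flat $G$ rather than a primitive of the possibly discontinuous $\calP$.
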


Note that $Q_{x'}$ for $x'\in S_h$ is a linear map between $T_{(x',0)}\M^h\to \R^3$, whereas $d\sigma_{x'}:T_{x'}\S \to \R^3$. 

\begin{proof}
First, we note that 
\[
\frac{1}{h^3} \int_{\M^h} \eta\brk{\frac{d((x',0),y)}{h}}\, \dVol{\calP}(y) = 1
\]
for every $x'\in \S_h$. 
Indeed, since $(\M^h,G)$ is locally-Euclidean, there exists a local coordinate system near $(x',0)$, 
such that $(x',0)$ maps to the origin, and with respect to which the metric is the standard Euclidean metric.
Therefore the $G$-ball of radius $h$ around $(x',0)$ is the Euclidean ball $B_h(0)$ in these coordinates. 
Thus,
\[
\frac{1}{h^3} \int_{\M^h} \eta\brk{\frac{d((x',0),y)}{h}}\, \dVol{\calP}(y) 
= \frac{1}{h^3} \int_{B_h(0)} \eta\brk{\frac{|y|}{h}}\, dy 
\]
is independent of $x'$, and hence equals $1$.

Still in a local Euclidean coordinate system we have
\[
\sigma(x') = \frac{1}{h^3} \int_{\R^3} f(y)\,  \eta\brk{\frac{|(x',0) - y|}{h}}\, dy,
\]
and
\[
Q_i(x') = \frac{1}{h^3} \int_{\R^3} \partial_if(y)\,  \eta\brk{\frac{|(x',0) - y|}{h}}\,dy,
\]
since the local representation of the parallel transport operator is the identity.
We need to show that for $i=1,2$,
\[
\partial_i\sigma(x,y) = Q_i(x,y),
\]
which is an immediate consequence of the commutation of differentiation and mollification in Euclidean space.
The inequality follows from the trivialization of the covariant derivative and the fact that $\nabla^{\g} d\sigma$ only accounts for in-plane variations of the planar components of $Q$.
\end{proof}

The following proposition is an adaptation of  \cite[Lemma~4.1]{LP11}:

\begin{proposition}
\label{prop:df-Q}
Let $\M^h_{2h} = \S_{2h} \times (0,h)$.
The following inequalities hold for some constant $C>0$:
\[
\frac{1}{h} \int_{\M_{2h}^h} |df(x) -  Q_{x'}\circ\Pi_x^{(x',0)}|^2\,\dVol{\calP}(x) \le C E_{(\M^h,\calP)}(f),
\]
\[
\int_{\S_{2h}} |\nabla^\g Q|_\g^2\,\dVol{\K} \le \frac{C}{h^2} E_{(\M^h,\calP)}(f),
\]
and
\[
\int_{\S_{2h}} \dist^2(Q\circ\calP^{-1},\SO(3))\,\dVol{\K} \le C\,  E_{(\M^h,\calP)}(f).
\]
\end{proposition}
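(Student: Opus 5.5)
The approach is the standard Friesecke--James--M\"uller / Lewicka--Pakzad argument, transplanted to the locally-Euclidean manifold $(\M^h,G)$. Since $G$ is flat, every $G$-ball $B_{2h}(x)$ around a point $x=(x',0)$ with $x'\in\S_{2h}$, intersected with $\M^h$, is isometric (through a developing map $\Psi$ with $d\Psi=\calP$ up to a rotation) to a Euclidean half-cylinder-like domain of diameter comparable to $h$. These domains are bi-Lipschitz to a unit cube after rescaling by $h$, uniformly in $x'$. Applying the FJM rigidity estimate \cite{FJM02b} on each such cell, as in the proofs of \propref{prop:FJMdisloc} and \propref{prop:FJMdisc}, gives a rotation-valued frame $R_{x'}\in\SO(3)$. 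Equivalently, $R_{x'}\calP$ is a $G$-parallel field on the cell, transported via $\Pi$, and it satisfies
\[
\int_{B_{2h}(x)\cap\M^h}|df(y)-R_{x'}\calP\circ\Pi^y_{x}|^2\,\dVol{\calP}(y)\le C\int_{B_{2h}(x)\cap\M^h}\dist^2(df\circ\calP^{-1},\SO(3))\,\dVol{\calP}.
\]
Call the right-hand side $\mathcal{D}(x')$.

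\textbf{First inequality.} By its definition in \propref{prop:reduction}, $Q_{x'}$ is an $\eta$-weighted average of $df\circ\Pi$ over the ball $B_h(x)$. Jensen's inequality then gives
\[
|Q_{x'}-R_{x'}\calP_x|^2\le Ch^{-3}\int_{B_h}|df-R_{x'}\calP\circ\Pi|^2\le Ch^{-3}\mathcal{D}(x').
\]
Now fix $x=(x',x_3)$ with $x'\in\S_{2h}$, and estimate
\[
|df(x)-Q_{x'}\circ\Pi^{(x',0)}_x|\le|df(x)-R_{x'}\calP\circ\Pi|+|R_{x'}\calP-Q_{x'}|.
\]
Next, integrate over $\M^h_{2h}$. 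To handle the first term, cover $\S_{2h}$ by a grid of $G$-discs of radius $h$ with bounded overlap, and use the fact that $R_{x'}$ varies little between grid points (see below). This bounds the integral by $C\sum_{\text{cells}}\mathcal{D}\le C\int_{\M^h}\dist^2(df\circ\calP^{-1},\SO(3))\,\dVol{\calP}=Ch\,E$, which after division by $h$ is the first claim.

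\textbf{Second inequality.} Differentiate $Q$ in an in-plane direction. In local Euclidean coordinates, where $\Pi$ is the identity, we have $\partial_iQ(x')=h^{-3}\int df(y)\,\partial_i[\eta(|(x',0)-y|/h)]\,dy$. Since $\int\partial_i\eta=0$, we may subtract the constant $R_{x'}\calP$ inside the integral. Cauchy--Schwarz with $|\partial_i\eta|\le C/h$ on a set of volume $\sim h^3$ then gives
\[
|\nabla^\g Q|^2\le Ch^{-5}\mathcal{D}(x').
\]
Integrating over $\S_{2h}$ with $\dVol{\K}$, each point of $\M^h$ is counted in $\mathcal{D}(x')$ for $x'$ ranging over a set of area $\lesssim h^2$. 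This yields $\int|\nabla^\g Q|^2\le Ch^{-5}h^2\cdot hE=Ch^{-2}E$.

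\textbf{Third inequality.} Pointwise,
\[
\dist^2(Q\calP^{-1},\SO(3))\le|Q_{x'}-R_{x'}\calP|^2\le Ch^{-3}\mathcal{D}(x').
\]
Integrating over $x'$ gives $Ch^{-3}h^2\cdot hE=CE$.

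The main obstacle is the geometric bookkeeping rather than the analysis. We need the cells $B_{2h}((x',0))\cap\M^h$ to admit a rigidity constant that is uniform in $x'$ and $h$, which requires that the developing map $\Psi$ be injective and uniformly bi-Lipschitz on them. We also need to control the $\mathcal{D}$-overlap (Fubini) counting. Near the boundary this is exactly why we restrict to $\S_{2h}$; near the cone apex or dislocation core one must use that $\dist_\g(x',\partial\S)>2h$ keeps the ball away from the inner boundary. Once these uniform cell estimates hold, the three inequalities follow from the averaging computations above.
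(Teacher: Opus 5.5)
Your proposal is correct and follows essentially the same route as the paper. It applies local FJM rigidity on $h$-sized cells in a flat chart, uses Jensen to compare the mollified gradient $Q$ with the cell rotation, bounds $\nabla^\g Q$ by differentiating the mean-zero mollifier and applying Cauchy--Schwarz, and sums over a bounded-overlap covering. Your direct proof of the third inequality via $\dist(Q\calP^{-1},\SO(3))\le|Q_{x'}-R_{x'}\calP|$ is a harmless variant of the paper's derivation from the first inequality. The one step you only assert is that $R_{x'}$ varies little between grid points: your ``see below'' points to nothing. It follows from comparing the two FJM estimates on the overlap of neighbouring cells, as the paper does for $|Q_{p'}-Q_{q'}|$. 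Alternatively, you can bypass it by comparing $Q_{x'}$ directly with $R_{p'}$, since $B_h(x')\subset B_{2h}(p')$ when $|x'-p'|<h$.
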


\begin{proof}
We start with a local analysis restricted to domains of diameter $O(h)$, hence we may assume once again a Euclidean coordinate system, in which we identify, for $x'\in \S_{2h}$, the $\g$ ball $B_{2h}(x')$ with the Euclidean ball of radius $2h$;
furthermore, we note that the volume form is the Euclidean one under this identification, and that the parallel transport is trivial (the identity matrix).

By the FJM geometric rigidity theorem, there exists a (non-necessarily regular) section $U:\S_{2h}\to \SO(3)$, such that for every $x'\in\S_{2h}$,
\begin{equation}
\label{eq:FJMcylinder}
\int_{B_{2h}(x') \times(0,h)} |df - U_{x'}|^2\,dx \le  
C \int_{B_{2h}(x') \times(0,h)} \dist^2(df ,\SO(3))\,dx,
\end{equation}
where $C>0$ is some absolute constant, and when we identify $B_{2h}(x') \times(0,h)$ with the Euclidean domain via the Euclidean coordinate system.
By the definition of $Q$, we have that
\[
Q_{x'} - U_{x'} = \frac{1}{h^3} \int_{B_h(x') \times(0,h)} (df - U_{x'}) \, \eta\brk{\frac{d((x',0),y)}{h}}\, dy,
\]
hence 
\[
|Q_{x'} - U_{x'}|^2 \le  \frac{C_1}{h^3} \int_{B_h(x') \times(0,h)}  |df - U_{x'}|^2\, dy
\le \frac{C_2}{h^3} \int_{B_{2h}(x') \times(0,h)} \dist^2(df ,\SO(3))\,dy
\]
Let $p',q'\in\S_{2h}$ such that $|p'-q'|\le h$ and let $r' = (p'+q')/2$. Then,
\[
\begin{split}
Q_{p'} - Q_{q'} &= 
\frac{1}{h^3} \int_{\M^h} df \, \brk{\eta\brk{\frac{d((p',0),y)}{h}} - \eta\brk{\frac{d((q',0),y)}{h}}}\,\dVol{\calP}(y)  \\
&= \frac{1}{h^3} \int_{\M^h} (df - U_{r'}) \, \brk{\eta\brk{\frac{d((p',0),y)}{h}} - \eta\brk{\frac{d((q',0),y)}{h}}}\,\dVol{\calP}(y),
\end{split}
\]
from which follows that
\beq
\begin{split}
|Q_{p'} - Q_{q'} |^2 &\le
\frac{C_3}{h^3} \int_{B_{2h}(r')\times(0,h)} |df - U_{r'}|^2\,dy 
\le \frac{C_4}{h^3} \int_{B_{2h}(r')\times(0,h)} \dist^2(df ,\SO(3))\,dy,
\end{split}
\label{eq:Qdiff}
\eeq
Thus,
\[
\begin{split}
\int_{B_h(p')\times(0,h)} |df - Q|^2\,dy &\le
3 \int_{B_h(p')\times(0,h)}\brk{ |df - U_{p'}|^2   + |U_{p'} - Q_{p'}|^2 +  |Q_{p'} - Q|^2}\,dx \\
&\le C_5 \int_{B_{2h}(p') \times(0,h)} \dist^2(df,\SO(3))\,dy.
\end{split}
\]
Similarly, using Cauchy-Schwartz, we have
\[
\begin{split}
|\nabla Q_{q'}|^2 &= \left| \frac{1}{h^4} \int_{B_{h}(q')\times(0,h)} df \, \brk{\nabla \eta\brk{\frac{d((q',0),{y})}{h}}}\, dy \right|^{2}\\
&= \left| \frac{1}{h^4} \int_{B_{h}(ql)\times(0,h)} \left( df - U_{ql} \right) \, \brk{\nabla \eta\brk{\frac{d((q',0),{y})}{h}}}\, dy \right|^{2}\\
&\leq \frac{1}{h^8}\int_{B_{h}(q')\times(0,h)}  \left| df - U_{q'} \right|^{2}  dy \int_{B_{h}(q')\times(0,h)}\left|  \nabla \eta\brk{\frac{d((q',0),{y})}{h}} \right|^{2}\, dy \\
&\le \frac{C_6}{h^5}\int_{B_{h}(q')\times(0,h)}  \left| df - U_{q'} \right|^{2}  dy.
\end{split}
\]
Hence, by equation \eqref{eq:FJMcylinder}, 
\[
\int_{B_h(p')} |\nabla Q|^2\,dx \le \frac{C_7}{h^3} \int_{B_{2h}(p')\times(0,h)} \dist^2(df,\SO(3))\,dx.
\]

Reverting to a coordinate-free notation, we have just shown that
\[
\int_{B_h(x')\times(0,h)} |df(y) - Q_{y'}\circ\Pi_y^{(y',0)}|^2\,\dVol{\calP}(y) \le  C \int_{B_{2h}(x') \times(0,h)} \dist^2(df\circ\calP^{-1},\SO(3))\,\dVol{\calP},
\]
and
\[
\int_{B_h(x')} |\nabla^\g Q|^2\,\dVol{\K} \le \frac{C}{h^3}  \int_{B_{2h}(x') \times(0,h)} \dist^2(df\circ\calP^{-1},\SO(3))\,\dVol{\calP}.
\]
Let $\{x_i\}\subset S_{2h}$ be a maximal $h$-separated set (constructed, for example, by a greedy algorithm).
By covering $\S_{2h}$ by the finitely-many discs $B_h(x_i)$ and summing up these inequalities,  we obtain the first two inequalities, where the constant $C$ is multiplied by at most $25$. 
The third inequality follows from the first, as 
\[
\dist^2(Q_{x'}\circ\calP_{(x',0)}^{-1},\SO(3))  \le 2\, |df(x) - Q_{x'}\circ\Pi_x^{(x',0)}|^2 + 2\, \dist^2(df(x)\circ\calP_{x}^{-1},\SO(3)),
\]
where we used the fact that $\calP$ is $x_3$-independent.
\end{proof}

We now conclude the proof of \thmref{thm:reverse_KL}:
Recall that 
\[
d\sigma = Q|_{T\S_{h}},
\]
and thus, from the structure of $\calP$, we have
\[
d\sigma \circ \K^{-1} = Q\circ \calP^{-1}\circ \iota^\parallel.
\]
where $\iota^\parallel: \R^2\to R^3$ is the standard inclusion map.
Hence,
\[
\dist^2(d\sigma \circ \K^{-1},\O(2,3))  \le \dist^2(Q\circ \calP^{-1},\SO(3)).
\]
Integrating and using \propref{prop:df-Q},
\[
\int_{S_{2h}} \dist^2(d\sigma \circ \K^{-1},\O(2,3)) \,\dVol{\K} \le  C\, E_{(\M,\calP)}(f).
\]
Finally, since, by \propref{prop:reduction}, $|\nabla^{\g}d\sigma| \le |\nabla^\g Q|$, \propref{prop:df-Q} yields the same bound for the bending energy.

\addcontentsline{toc}{section}{References} 
\footnotesize

\begin{thebibliography}{GHKM13}

\bibitem[AKM22]{AKM22}
I.~Alpern, R.~Kupferman, and C.~Maor, \emph{Asymptotic rigidity for shells in
  non-{Euclidean} elasticity}, J. Func. Anal. \textbf{283} (2022), no.~6,
  109575.

\bibitem[CM08]{CM08}
S.~Conti and F.~Maggi, \emph{Confining thin elastic sheets and folding paper},
  Arch. Rat. Mech. Anal. \textbf{187} (2008), 1--48.

\bibitem[Con]{Con23}
S.~Conti, \emph{On the energy of a thin plate with a single dislocation},
  private communication.

\bibitem[COT17]{conti2017symmetry}
S.~Conti, H.~Olbermann, and I.~Tobasco, \emph{Symmetry breaking in indented
  elastic cones}, Math. Models Meth. Appl. Sci. \textbf{27} (2017), no.~02,
  291--321.

\bibitem[ES51]{ES51}
J.~Eshelby and A.N. Stroh, \emph{Dislocations in thin plates}, Philosophical
  Magazine \textbf{42} (1951), no.~335, 1401--1405.

\bibitem[FJM02]{FJM02b}
G.~Friesecke, R.D. James, and S.~M\"uller, \emph{A theorem on geometric
  rigidity and the derivation of nonlinear plate theory from three dimensional
  elasticity}, Comm. Pure Appl. Math. \textbf{55} (2002), 1461--1506.

\bibitem[GHKM13]{GHKM13}
J.~Guven, J.A. Hanna, O.~Kahraman, and M.M. M{\"u}ller, \emph{Dipoles in thin
  sheets}, Europ. Phys. J. E \textbf{36} (2013), 1--10.

\bibitem[KM26]{kupferman2026volterra}
R.~Kupferman and C.~Maor, \emph{From {Volterra} dislocations to strain-gradient
  plasticity}, Calc. Var. PDE \textbf{65} (2026), no.~4, 102.

\bibitem[KMPG25]{KMP25}
R.~Kupferman, C.~Maor, and D.~Padilla-Garza, \emph{The {Willmore} energy and
  curvature concentration}, \url{https://arxiv.org/abs/2511.18982}, 2025.

\bibitem[KS14]{KS14}
R.~Kupferman and J.P. Solomon, \emph{A {Riemannian} approach to reduced plate,
  shell, and rod theories}, J. Func. Anal. \textbf{266} (2014), 2989--3039.

\bibitem[Kup17]{Kup17}
R.~Kupferman, \emph{On the bending energy of buckled edge-dislocations}, Phys.
  Rev. E \textbf{96} (2017), 063002.
  
 \bibitem[Lew23]{Lew23}
M.~Lewicka, \emph{Calculus of Variations on Thin Prestressed Films}, Birkh\"auser Cham, 2023.

\bibitem[Lov27]{Lov27}
A.E.H. Love, \emph{A treatise on the mathematical theory of elasticity}, Fourth
  ed., Cambridge University Press, Cambridge, 1927.

\bibitem[LP11]{LP11}
M.~Lewicka and M.R. Pakzad, \emph{Scaling laws for non-{Euclidean} plates and
  the {$W^{2,2}$} isometric immersions of {Riemannian} metrics}, ESAIM Cont.
  Opt. Calc. Var. \textbf{17} (2011), 1158--1173.

\bibitem[Mao25]{Mao25}
C.~Maor, \emph{On material-uniform elastic bodies with disclinations and their
  homogenization}, Math. Mech. Solids \textbf{30} (2025), no.~9, 2043--2053.
  
\bibitem[MM03]{MM03}
M.G.~Mora and S.~M\"uller, \emph{Derivation of the nonlinear bending-torsion theory for inextensible rods by {$\Gamma$}-convergence},
Calc. Var. PDE \textbf{18} (2003), 287--305.

\bibitem[MO14]{MO14}
S.~M{\"u}ller and H.~Olbermann, \emph{Conical singularities in thin elastic
  sheets}, Calc. Var. PDE \textbf{49} (2014), no.~3-4, 1177--1186.

\bibitem[MS19]{MS19}
C.~Maor and A.~Shachar, \emph{On the role of curvature in the elastic energy of
  non-euclidean thin bodies}, J. Elast. \textbf{134} (2019), no.~2,
  149--173.

\bibitem[Nel02]{Nel02}
D.R. Nelson, \emph{Defects and geometry in condensed matter physics}, Cambridge
  University Press, 2002.

\bibitem[NP87]{nelson1987fluctuations}
D.R. Nelson and L.~Peliti, \emph{Fluctuations in membranes with crystalline and
  hexatic order}, J. Physique \textbf{48} (1987), no.~7, 1085--1092.

\bibitem[Olb16]{olbermann2016energy}
H.~Olbermann, \emph{Energy scaling law for the regular cone}, J. 
  Nonlin. Sci. \textbf{26} (2016), no.~2, 287--314.

\bibitem[Olb17]{Olb17}
\bysame, \emph{Energy scaling law for a single disclination in a thin elastic
  sheet}, Arch. Rat. Mech. Anal. \textbf{224} (2017), no.~3, 985--1019.

\bibitem[Olb18]{olbermann2018shape}
\bysame, \emph{The shape of low energy configurations of a thin elastic sheet
  with a single disclination}, Anal. \& PDE \textbf{11} (2018), no.~5,
  1285--1302.

\bibitem[SN88]{SN88}
H.S. Seung and D.R. Nelson, \emph{Defects in flexible membranes with
  crystalline order}, Phys. Rev. A \textbf{38} (1988), no.~2, 1005.

\bibitem[SZ12]{SZ12}
L.~Scardia and C.I. Zeppieri, \emph{Line-tension model for plasticity as the
  {$\Gamma$}-limit of a nonlinear dislocation energy}, SIAM J.
  Math. Anal. \textbf{44} (2012), no.~4, 2372--2400.

\bibitem[Vol07]{Vol07}
V.~Volterra, \emph{Sur l'{\'e}quilibre des corps {\'e}lastiques multiplement
  connexes}, Ann. Sci.
  Ecole Norm. Sup. Paris \textbf{24} (1907), 401--518.

\bibitem[Wit07]{Wit07}
T.A. Witten, \emph{Stress focusing in elastic sheets}, Rev. Mod.
  Phys. \textbf{79} (2007), no.~2, 643--675.

\end{thebibliography}

\providecommand{\bysame}{\leavevmode\hbox to3em{\hrulefill}\thinspace}
\providecommand{\MR}{\relax\ifhmode\unskip\space\fi MR }
\providecommand{\MRhref}[2]{%
  \href{http://www.ams.org/mathscinet-getitem?mr=#1}{#2}
}
\providecommand{\href}[2]{#2}

\end{document}